\renewcommand\theequation{\thesection.\arabic{equation}}
\newcommand{\BC}{{\mathbb {C}}}
\newcommand{\BR}{{\mathbb {R}}}
\newcommand{\BZ}{{\mathbb {Z}}}
\newcommand{\CB}{{\mathcal {B}}}
\newcommand{\CC}{{\mathcal {C}}}
\newcommand{\CF}{{\mathcal {F}}}
\newcommand{\CK}{{\mathcal {K}}}
\newcommand{\CL}{{\mathcal {L}}}
\newcommand{\CO}{{\mathcal {O}}}
\newcommand{\CP}{{\mathcal {P}}}
\newcommand{\CT}{{\mathcal {T}}}
\newcommand{\CX}{{\mathcal {X}}}
\newcommand{\Fa}{{\mathfrak {a}}}
\newcommand{\Fg}{{\mathfrak {g}}}
\newcommand{\Fh}{{\mathfrak {h}}}
\newcommand{\Fm}{{\mathfrak {m}}}
\newcommand{\Ft}{{\mathfrak {t}}}
\newcommand{\Fu}{{\mathfrak {u}}}
\newcommand{\Fz}{{\mathfrak {z}}}
\newcommand{\RU}{{\mathrm {U}}}
\newcommand{\Ad}{{\mathrm{Ad}}}
\newcommand{\End}{{\mathrm{End}}}
\newcommand{\GL}{{\mathrm{GL}}}
\newcommand{\GU}{{\mathrm{GU}}}
\newcommand{\Hom}{{\mathrm{Hom}}}
\renewcommand{\Im}{{\mathrm{Im}}}
\newcommand{\I}{{\mathrm{I}}}
\newcommand{\tr}{{\mathrm{tr}}}
\newcommand{\ud}{\,\mathrm{d}}
\newcommand{\vol}{{\mathrm{vol}}}
\newcommand{\zg}{Z_G(F)\backslash G(F)}
\newcommand{\zh}{Z_H(F)\backslash H(F)}
\newcommand{\back}{\backslash}
\newcommand{\hc}{\Xi^{H\backslash G}}
\newcommand{\nor}{\sigma_{H\backslash G}}
\def\lam{{\lambda}}
\newtheorem{thm}{Theorem}[section]
\newtheorem{cor}[thm]{Corollary}
\newtheorem{lem}[thm]{Lemma}
\newtheorem{prop}[thm]{Proposition}
\newtheorem {conj}[thm]{Conjecture}
\newtheorem {ques/conj}[thm]{Question/Conjecture}
\newtheorem{defn}[thm]{Definition}
\newtheorem{rmk}[thm]{Remark}
\newcommand{\Rmnum}[1]{\expandafter\@slowromancap\romannumeral #1@}
\begin{document}
\renewcommand{\theequation}{\arabic{equation}}
\numberwithin{equation}{section}

\title[]{The multiplicity problems for the unitary Ginzburg-Rallis models}

\author{Chen Wan}
\address{Department of Mathematics\\
Massachusetts Institute of Technology\\
Cambridge, USA}
\email{chenwan@mit.edu}

\author{Lei Zhang}
\address{Department of Mathematics\\
National University of Singapore, Singapore}
\email{matzhlei@nus.edu.sg}

\subjclass[2010]{Primary 22E35, 22E50}
\keywords{Harmonic Analysis on Spherical Variety, Representation of $p$-adic Group, Local Trace Formula, Multiplicity One on Vogan Packet}

\begin{abstract}
We consider the local multiplicity problems of the analogy of the Ginzburg-Rallis model for the unitary group and the unitary similitude  group cases. For the unitary similitude  group case, by proving a local trace formula for the model, we are able to prove a multiplicity formula for all tempered representations, which implies that the summation of the multiplicities is equal to $1$ over every tempered local Vogan $L$-packet. For the unitary group case, we also prove a multiplicity formula for all tempered representations which implies that the summation of the multiplicities is equal to $2$ over every tempered local Vogan $L$-packet.
\end{abstract}

\thanks{The work of the second named author is supported in part by AcRF Tier 1 grant R-146-000-237-114 of National University of Singapore.}

\maketitle

\section{Introduction and Main Results} \label{sec:introduction}
\subsection{Main results}
Let $F$ be a nonarchimedean field of characteristic 0 and $E=F(\sqrt{\alpha})$ be a quadratic extension of $F$. Let $\eta_{E/F}\colon  F^{\times}\rightarrow \BC^{\times}$ be the quadratic character associated to $E$ via the local class field theory, $N_{E/F}$ (resp. $\tr_{E/F}$) be the norm map (resp. trace map), and $x\rightarrow \bar{x}$ be the Galois action on $E$. Denote  $w_{n}$ to  be the symmetric matrix of size $n\times n$ given by
$$
w_{n}=\begin{pmatrix}
&w_{n-1}\\ 1&
\end{pmatrix}\text{ and } w_1=\begin{pmatrix}1 \end{pmatrix}.
$$
For $\varepsilon\in F^{\times}$, let
$$
J_{2n, \varepsilon}=\begin{pmatrix}0&0& w_{n-1}\\0&A_{\varepsilon}&0\\w_{n-1}&0&0\end{pmatrix}
\text{ where }A_{\varepsilon}=\begin{pmatrix}-\varepsilon&0\\0&1\end{pmatrix}.
$$
Define the unitary similitude group $\GU_{2n,\varepsilon}(F)=\GU(J_{2n,\varepsilon})(F)$ to be
\begin{equation}\label{eq:GU-n}
\GU(J_{2n,\varepsilon})(F)=\{ g\in \GL_{2n}(E)\colon  {}^{t}\bar{g}J_{2n,\varepsilon}g=\lam(g) J_{2n,\varepsilon}\}	
\end{equation}
where $\lam(g)\in F^{\times}$ is the similitude factor of $g$. Note that if $\varepsilon$ belongs to the image $ \Im(N_{E/F})$, then $\GU(J_{2n,\varepsilon})$ is quasi-split;
if $\varepsilon\notin \Im(N_{E/F})$, then $\GU(J_{2n,\varepsilon})$ is the non-quasi-split inner form of the quasi-split unitary similitude group.
In this paper, we mainly work on the groups $G_{\varepsilon}=\GU(J_{6,\varepsilon})$.

Next, we will introduce a spherical subgroup of $G_{\varepsilon}$. Let $P_{\varepsilon}=M_{\varepsilon}U_{\varepsilon}$ be the standard parabolic subgroup of $\GU(J_{6,\varepsilon})$ with
\begin{align*}
M_{\varepsilon}(F)=&\{m(g,h)=\left(\begin{smallmatrix}
g&&\\&h&\\&&\lambda(h)g^*	\end{smallmatrix}\right) \colon g\in \GL_2(E),\; g^*=w_2{}^t\bar{g}^{-1}w_2,~h\in\GU(J_{2,\varepsilon})(F)\},
\end{align*}
\begin{align*}
U_{\varepsilon}(F)=\{u(X,Y)=\left(\begin{smallmatrix}
I_2&X&Y\\&I_{2}&X'\\&&I_2	\end{smallmatrix}\right) \colon & X,Y\in Mat_{2\times 2}(E),X'=-A_{\varepsilon}^{-1}{}^t\!Xw_2,\\
&w_2Y + {}^t\!Yw_2+{}^t\!X'A_{\varepsilon}X'=0\}.
\end{align*}


Define a generic character $\xi_{\varepsilon}$ of $U_{\varepsilon}(F)$ to be
$$
\xi_{\varepsilon}(u(X,Y))=\psi(\tr_{E/F}(\tr(X))).
$$
where $\psi$ is a non-trivial additive character of $F$. Then the stabilizer of $\xi_{\varepsilon}$ under the adjoint action of $M_{\varepsilon}(F)$ is
$$
H_{0,\varepsilon}(F):=\{m(h,h)\colon h\in \GU(J_{2,\varepsilon})(F)\}.
$$
Let $\chi_F$ (resp. $\chi_E$) be a character of $F^{\times}$ (resp. $E^{\times}$). We then define the character $\omega_{\varepsilon}$ of $H_{0,\varepsilon}(F)$ to be
$$\omega_{\varepsilon}(m(h,h))=\chi_E(\det(h))\chi_F(\lambda(h))$$
where $\lambda$ is the similitude character of $\GU(J_{2,\varepsilon})(F)$. Let $\eta$ be the restriction of the character $\omega_{\varepsilon}$ to the center $Z_{H_{0,\varepsilon}}(F)=Z_{G_{\varepsilon}}(F)\simeq E^{\times}$. It is easy to see that $\eta=\chi_{E}^{2}\otimes (\chi_F\circ N_{E/F})$.

Define $H_\varepsilon=H_{0,\varepsilon}\ltimes U_{\varepsilon}$, which is a spherical subgroup of $\GU(J_{6,\varepsilon})$. Then we have a character $\omega_{\varepsilon}\otimes \xi_{\varepsilon}$ of $H_{\varepsilon}(F)$. Let $\pi_{\varepsilon}$ be a smooth admissible representation of $G_{\varepsilon}(F)$ with central character $\eta$. We define the multiplicity
$$m(\pi_{\varepsilon})=\dim(\Hom_{H_{\varepsilon}(F)} (\pi_{\varepsilon},\omega_{\varepsilon}\otimes \xi_{\varepsilon})).$$
The goal of this paper is to study the behavior of the multiplicity $m(\pi_{\varepsilon})$ over the local Vogan $L$-packet.

For $i=1,2$, fix $\varepsilon_i\in F^{\times}$ with $\eta_{E/F}(\varepsilon_i)=(-1)^{i-1}$. Let $\phi$ be a tempered Langlands parameter for $\GU_6(F)$. Assume the endoscopic classification holds for even unitary similitude group (This is expected from the endoscopic classification of unitary groups in \cite{M15} and \cite{KMSW}, together with Xu's work \cite{Xu16} on the reduction from the similitude classical groups to classical groups.
We refer the readers to Section \ref{sec:U-GU} for details). Then the parameter $\phi$ determines a tempered local Vogan $L$-packet $\Pi_{\phi}=\Pi_{\phi} (G_{\varepsilon_1})\cup \Pi_{\phi}(G_{\varepsilon_2})$ consisting of a finite number of tempered representations of $G_{\varepsilon_1}(F)$ and $G_{\varepsilon_2}(F)$ respectively. Our main theorem can be stated as follows.

\begin{thm}\label{main GU}
For all tempered Langlands parameters $\phi$ of $\GU_6(F)$, we have
$$
\sum_{i=1}^{2}\sum_{\pi_{\varepsilon_i}\in \Pi_{\phi} (G_{\varepsilon_i})} m(\pi_{\varepsilon_i})=1.
$$
In other words, the summation of the multiplicities over every tempered local Vogan $L$-packet is equal to 1.
\end{thm}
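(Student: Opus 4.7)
The plan is to follow the strategy developed by Waldspurger and Beuzart-Plessis for the Gan--Gross--Prasad conjecture, as adapted by the first author to the classical (split) Ginzburg--Rallis model. The central technical step is to establish a \emph{multiplicity formula}
\[
m(\pi_{\varepsilon}) = m_{\mathrm{geom}}(\pi_{\varepsilon})
\]
for every tempered representation $\pi_{\varepsilon}$ of $G_{\varepsilon}(F)$ with central character $\eta$, where $m_{\mathrm{geom}}(\pi_{\varepsilon})$ is a finite sum of regular integrals of the normalized Harish-Chandra character $\theta_{\pi_{\varepsilon}}$ against the model datum $(\omega_{\varepsilon}\otimes\xi_{\varepsilon})$ over the elliptic semisimple conjugacy classes in $H_{0,\varepsilon}(F)$. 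Granted this formula, the sum over a tempered Vogan $L$-packet $\Pi_\phi$ can be reorganized into an integral of a stable character against the geometric datum, and the endoscopic classification on $\GU_6$ (as supplied by Mok, KMSW, and Xu) collapses this stable expression to the constant $1$.

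To establish the multiplicity formula I would prove a local relative trace formula for the triple $(G_\varepsilon, H_\varepsilon, \omega_\varepsilon\otimes\xi_\varepsilon)$ of Arthur--Waldspurger type. Concretely, for $f\in C_c^\infty(Z_{G_\varepsilon}(F)\bs G_\varepsilon(F),\eta^{-1})$ one forms the relative kernel
\[
K_f(g)=\int_{Z_{G_\varepsilon}(F)\bs H_{\varepsilon}(F)} f(g^{-1}hg)\,(\omega_\varepsilon\otimes\xi_\varepsilon)(h)\,dh,
\]
integrates it over $H_\varepsilon(F)\bs G_\varepsilon(F)$ after an Arthur--Kottwitz-type truncation adapted to the spherical structure, and evaluates the result in two ways. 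The spectral side produces a sum weighted by matrix coefficients; specializing $f$ to a cuspidal pseudo-coefficient of $\pi_\varepsilon$ isolates $m(\pi_\varepsilon)$. The geometric side localizes on the semisimple classes of $H_{0,\varepsilon}(F)$ via a Weyl integration formula on the spherical variety, and yields exactly $m_{\mathrm{geom}}(\pi_\varepsilon)$ for cuspidal $\pi_\varepsilon$. Extending from cuspidal to all tempered $\pi_\varepsilon$ by parabolic induction in families (as in Beuzart-Plessis's treatment of unitary Gan--Gross--Prasad) then produces the full multiplicity formula, with additional bookkeeping for the similitude factor $\lambda$ and the central character $\eta=\chi_E^2\otimes(\chi_F\circ N_{E/F})$.

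With the multiplicity formula in hand, one sums over $\Pi_\phi$. Since $m_{\mathrm{geom}}$ depends only on the values of $\theta_{\pi_\varepsilon}$ on regular semisimple elliptic classes of $H_{0,\varepsilon}$, and such classes transfer between the two pure inner forms $G_{\varepsilon_1}$ and $G_{\varepsilon_2}$, the endoscopic character relations within the Vogan packet assemble
\[
\sum_{i=1}^{2}\sum_{\pi_{\varepsilon_i}\in\Pi_\phi(G_{\varepsilon_i})} \theta_{\pi_{\varepsilon_i}}
\]
into the stable character $\theta_\phi^{\mathrm{st}}$ on the appropriate elliptic locus. The problem therefore reduces to computing a single stable integral. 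Using the Plancherel/Whittaker normalization on the quasi-split form, together with the fact that the packet of the generic member contains a unique Whittaker generic representation, this stable integral evaluates to $1$; for tempered $\phi$ that factor through a proper parabolic, a parabolic descent argument reduces the assertion to analogous identities on smaller Ginzburg--Rallis or Gan--Gross--Prasad models for unitary (similitude) groups of lower rank, where the required summation formula is already known.

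The main obstacle will be the geometric side of the local trace formula: one must classify the $H_\varepsilon\times H_\varepsilon$-orbits on $G_\varepsilon$, prove that only the semisimple elliptic contribution survives truncation, and control the Fourier--Jacobi-type oscillatory integrals produced by the generic character $\xi_\varepsilon$ along the unipotent radical $U_\varepsilon$. A secondary difficulty is the transition between the unitary group and the unitary similitude group, which requires invoking Xu's extension of the Arthur--Mok endoscopic classification and a careful compatibility check for the central character $\eta$ throughout the trace formula identities.
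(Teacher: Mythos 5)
Your overall strategy---prove a multiplicity formula $m(\pi_\varepsilon)=m_{\mathrm{geom}}(\pi_\varepsilon)$ via a local relative trace formula, then sum over $\Pi_\phi$ using the endoscopic character relations so that the non-identity elliptic contributions cancel between the two inner forms while the germ at the identity contributes $1$ via the unique generic member and Rodier's theorem---is exactly the strategy the paper takes. However, you have located the principal difficulty in the wrong place, and this creates a genuine gap. You say the geometric side is the main obstacle and that the passage from cuspidal to general tempered representations can be done ``by parabolic induction in families (as in Beuzart-Plessis's treatment of unitary Gan--Gross--Prasad).'' The paper states that the geometric side is handled by arguments very close to the GR and GGP cases; the real obstruction is on the \emph{spectral} side, because the Beuzart-Plessis spectral argument relies crucially on the Gelfand pair condition $m(\pi)\le 1$, and for $(G_\varepsilon,H_\varepsilon)$ this is not known a priori (it is only a consequence of the theorem itself). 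Invoking the GGP spectral argument wholesale, as you do, would be circular.

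The paper circumvents this as follows. One decomposes $\CC(Z_G\backslash G,\eta^{-1})={}^\circ\CC\oplus\CC_{\mathrm{ind}}$ into a discrete-series part and an induced part, and proves the spectral expansion separately on each summand. On ${}^\circ\CC$, one uses Beuzart-Plessis's Galois-model technique (his Proposition 3.2.1): the pairing $\mathcal{B}_\pi$ on the coinvariant spaces $\pi_{\omega\otimes\xi}\times\pi^\vee_{(\omega\otimes\xi)^{-1}}$ is shown to be perfect, which yields the expected $\mathrm{tr}(\pi(f))\,m(\pi)$ without any multiplicity-one hypothesis. On $\CC_{\mathrm{ind}}$, one first establishes $m(\tau)\le 1$ for all reduced models (Theorem 4.8, which itself uses the endoscopic character relations in lower rank), and then proves $m(I_{\bar Q}^G(\tau))\le m(\tau)$ by a Bernstein--Zelevinsky geometric-lemma analysis of the non-open $\bar Q(F)\backslash G(F)/H(F)$ orbits (the ``orbit method'' of the Appendix). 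This yields $m(\pi)\le 1$ for every tempered $\pi$ that is not a discrete series (Corollary 4.10), which is enough to run the GGP-style spectral argument for the induced part. Without this filtration argument and the orbit-method step, your ``parabolic induction in families'' does not go through. Everything else in your sketch---the truncation, the slice/Weyl integration on the geometric side, the use of Xu's reduction from similitude groups to classical groups, and the cancellation structure over the packet---matches the paper.
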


Then we study the analogy of the pair $(G_{\varepsilon},H_{\varepsilon})$ for the unitary group case. For $\varepsilon\in F^{\times}$, we define the unitary group $\mathrm{U}(J_{2n,\varepsilon})$ to be
\begin{equation}\label{eq:U-n}
\mathrm{U}(J_{2n,\varepsilon})(F)=\{ g\in \GL_{2n}(E)\colon {}^{t}\bar{g}J_{2n,\varepsilon}g=J_{2n,\varepsilon}\}.	
\end{equation}
We define $G_{1,\varepsilon}(F)=\mathrm{U}(J_{6,\varepsilon})(F)$. As in the similitude case, we can define the subgroups $H_{1,\varepsilon}= H_{0,1,\varepsilon}\ltimes U_{1,\varepsilon}$ of $G_{1,\varepsilon}$ with $H_{0,1,\varepsilon}(F)\simeq \mathrm{U}(J_{2,\varepsilon})(F)$. We can also define character $\omega_{1,\varepsilon}\otimes \xi_{1,\varepsilon}$ of $H_{1,\varepsilon}(F)$ via the characters $\psi$ and $\chi_E$ (note that here we don't have similitude character, hence we can only define the character $\omega_{1,\varepsilon}$ via the determinant map). Let $\eta_1$ be the restriction of the character $\omega_{1,\varepsilon}$ on the center $Z_{H_{0,1,\varepsilon}}(F)=Z_{G_{1,\varepsilon}}(F)\simeq E^{1}$
where $E^1$ is the kernel of the norm map $N_{E/F}$. It is easy to see that $\eta_1=\chi_{E}^{2}|_{E^1}$. Let $\pi_{1,\varepsilon}$ be a smooth admissible representation of $G_{1,\varepsilon}(F)$ with central character $\eta_1$, we define the multiplicity
$$m(\pi_{1,\varepsilon})=\dim(\Hom_{H_{1,\varepsilon}(F)} (\pi_{1,\varepsilon},\omega_{1,\varepsilon}\otimes \xi_{1,\varepsilon})).$$

For $i=1,2$, let $\varepsilon_i\in F^{\times}$ with $\eta_{E/F}(\varepsilon_i)=(-1)^{i-1}$ as before. Let $\phi$ be a tempered Langlands parameter for $\RU_6(F)$. By the endoscopic classification of unitary groups in \cite{M15} and \cite{KMSW}, the parameter $\phi$ determines a tempered local Vogan $L$-packet $\Pi_{\phi}=\Pi_{\phi} (G_{1,\varepsilon_1})\cup \Pi_{\phi}(G_{1,\varepsilon_2})$ consisting of a finite number of tempered representations of $G_{1,\varepsilon_1}(F)$ and $G_{1,\varepsilon_2}(F)$ respectively. Our main theorem for the unitary group case can be stated as follows.

\begin{thm}\label{main U}
For all tempered Langlands parameters $\phi$ of $\RU_6(F)$, we have
$$
\sum_{i=1}^{2}\sum_{\pi_{1,\varepsilon_i}\in \Pi_{\phi} (G_{1,\varepsilon_i})} m(\pi_{1,\varepsilon_i})=2.$$
In other words, the summation of the multiplicities over every tempered local Vogan $L$-packet is equal to 2.
\end{thm}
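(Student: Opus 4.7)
The plan is to mirror the strategy for Theorem \ref{main GU}: prove a geometric multiplicity formula $m(\pi_{1,\varepsilon}) = m_{\mathrm{geom}}(\pi_{1,\varepsilon})$ for every tempered $\pi_{1,\varepsilon}$, and then sum the geometric side over the Vogan $L$-packet $\Pi_\phi$ using the endoscopic character identities for $\RU_6$ of \cite{M15, KMSW}. Since $H_{1,\varepsilon} = H_\varepsilon \cap G_{1,\varepsilon}$ and the unipotent radical $U_{1,\varepsilon} = U_\varepsilon$ is unchanged, the spherical-variety structure is parallel to the similitude case, and most of the harmonic-analytic machinery---orbital integral germs, weighted character estimates, and the truncation on $H_{1,\varepsilon}\backslash G_{1,\varepsilon}$---should transfer from the proof of Theorem \ref{main GU} with only cosmetic modifications, accounting for the fact that the center $Z_{G_{1,\varepsilon}} \simeq E^1$ is now compact.

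Concretely, I would first establish a local trace formula for $(G_{1,\varepsilon}, H_{1,\varepsilon}, \omega_{1,\varepsilon} \otimes \xi_{1,\varepsilon})$ by adapting the proof in the similitude setting. This expresses $m(\pi_{1,\varepsilon})$ as a sum of integrals of the Harish-Chandra character $\theta_{\pi_{1,\varepsilon}}$ against $(\omega_{1,\varepsilon}\otimes\xi_{1,\varepsilon})$-equivariant densities on a finite set of regular semisimple tori arising from $H_{0,1,\varepsilon} \simeq \mathrm{U}(J_{2,\varepsilon})$. Summing over $\Pi_\phi$, the stable sum $\sum_{\pi_1 \in \Pi_\phi} \theta_{\pi_1}$ corresponds, via stable base change, to a virtual character on $\GL_6(E)$ attached to $\phi$, which reduces the packet sum to a Ginzburg-Rallis type geometric integral on $\GL_6(E)$ that should evaluate to $2$.

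The value $2$ (as opposed to $1$ in Theorem \ref{main GU}) admits a clean structural explanation through the index $[H_\varepsilon Z_{G_\varepsilon} : H_{1,\varepsilon} Z_{G_\varepsilon}] = [F^\times : N_{E/F}(E^\times)] = 2$: a given $\RU_6$-Vogan packet lifts to two distinct $\GU_6$-Vogan packets differing by the nontrivial character of $F^\times / N_{E/F}(E^\times)$, and Theorem \ref{main GU} contributes $1$ to each. This suggests an alternative route that bypasses the trace formula entirely and uses a Clifford-theoretic restriction argument comparing $m(\pi_\varepsilon)$ with $m(\pi_{1,\varepsilon})$ for $\pi_{1,\varepsilon} \subset \pi_\varepsilon|_{G_{1,\varepsilon}}$, which I would pursue in parallel as a consistency check. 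The main obstacle I anticipate is the packet-sum computation: tracking the endoscopic transfer factors through the construction of $m_{\mathrm{geom}}$ and verifying the compatibility between the Ginzburg-Rallis models on $\RU_6$ and on $\GL_6(E)$ require careful bookkeeping of the normalizations of $\omega_{1,\varepsilon}$ and $\xi_{1,\varepsilon}$, and the spectral asymmetry relative to the similitude case forces a re-examination of how the Mok/KMSW character identities interact with the weighted geometric distributions defining $m_{\mathrm{geom}}$.
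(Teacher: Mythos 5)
Your primary plan---directly establishing a local trace formula for $(G_{1,\varepsilon}, H_{1,\varepsilon}, \omega_{1,\varepsilon}\otimes\xi_{1,\varepsilon})$ by adapting the similitude proof ``with only cosmetic modifications''---is precisely what the authors explain does \emph{not} work, and they flag this explicitly in the introduction. The spherical geometry is genuinely different in the unitary case: there are \emph{two} open Borel orbits (indexed by $F^\times/N_{E/F}(E^\times)$) rather than one, and in the slice representation on $\Xi + \mathfrak{h}^\perp(F)$ some $G_{1,\varepsilon}(F)$-conjugation orbits break into two $H_{1,\varepsilon}(F)$-orbits. Both phenomena wreck the key steps in the geometric expansion---Proposition~\ref{spherical} and the orbit comparison used to convert $I_N(\phi)$ into a weighted orbital integral of $\hat\phi$---so the machinery does not transfer. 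This is a real gap in your proposed main route, not a bookkeeping subtlety about transfer factors or normalizations.

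Your ``alternative route,'' which you relegate to a parallel consistency check, is in fact the paper's actual proof, and you should promote it to the main argument. The paper proves a Clifford-theory statement (Proposition~\ref{GU v.s. U}): for $\pi_{1,\varepsilon}=\pi_\varepsilon|_{G_{1,\varepsilon}}$ one has $m(\pi_{1,\varepsilon}) = m(\pi_\varepsilon) + m(\pi_\varepsilon)'$, where $m(\pi_\varepsilon)'$ uses the twisted character $\omega_\varepsilon' = \omega_\varepsilon\cdot(\eta_{E/F}\circ\lambda)$; the proof is a direct linear-algebra argument using the index-$2$ extension $H_{1,\varepsilon}Z \subset H_\varepsilon$ and the bound $m(\pi_\varepsilon), m(\pi_\varepsilon)' \le 1$ from Theorem~\ref{main GU}. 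Plugging the similitude multiplicity formula into this relation and manipulating the elliptic tori (Lemma~\ref{5.4}, Proposition~\ref{multiplicity formula unitary}) gives the unitary multiplicity formula with the crucial new coefficient $2$ on $c_{\pi_1}(1)$---this is where the packet sum of $2$ comes from, not from any $\GL_6(E)$ computation. The final packet sum then uses Theorem~\ref{endoscopy U}\,\eqref{item:generic},\eqref{item:endoscopy-3} directly on the unitary packets, with no detour through stable base change to $\GL_6(E)$; your suggested reduction to a ``Ginzburg-Rallis type geometric integral on $\GL_6(E)$'' is neither needed nor carried out in the paper. Your structural explanation of the factor $2$ via $[F^\times : N_{E/F}(E^\times)]=2$ is morally right, but it manifests as two characters $\omega_\varepsilon, \omega_\varepsilon'$ on one $H_\varepsilon$, not as two distinct $\GU_6$-Vogan packets.
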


\begin{rmk}
The models $(G_{\varepsilon},H_{\varepsilon})$ (resp. $(G_{1,\varepsilon},H_{1,\varepsilon})$) can be viewed as the analogy of the Ginzburg-Rallis model (GR for simplicity) for the  unitary similitude group (resp. unitary group) case. The local multiplicity problem for the Ginzburg-Rallis model has been considered by the first named author in \cite{Wan15}, \cite{Wan16} and \cite{Wan16b}. We refer the readers to \cite{Wan17} for the definition of the model and the results.
\end{rmk}

\begin{rmk}\label{Gelfand}
We expect the results in Theorems \ref{main GU} and \ref{main U} hold for all generic local Vogan $L$-packets. For the  unitary similitude group case, we also expect the model $(G_{\varepsilon},H_{\varepsilon})$ to be a Gelfand pair, i.e. $m(\pi_{\varepsilon})\leq 1$ for all irreducible smooth representations of $G_{\varepsilon}(F)$.
Theorem \ref{main GU} verifies this inequality for all tempered representations.
However, the model $(G_{1,\varepsilon},H_{1,\varepsilon})$ is not a Gelfand pair. In fact, later in our proof, we can show that when $G_{1,\varepsilon}$ is quasi-split, $m(\pi_{1,\varepsilon})=2$ for all generic tempered unramified representations of $G_{1,\varepsilon}(F)$. For this model, we expect that the multiplicity is always less or equal to $2$.
\end{rmk}

\begin{rmk}
As in the Ginzburg-Rallis model case, globally we expect that the period integrals of the models $(G_{\varepsilon},H_{\varepsilon})$ are related to the central value of the exterior cube $L$-function.
\end{rmk}

\subsection{Remarks on the proofs}
We first discuss the proof of Theorem \ref{main GU} (i.e. the  unitary similitude group case). Our proof of Theorem \ref{main GU} uses Waldspurger's method in his proof of the local orthogonal Gan-Gross-Prasad (GGP for simplicity) conjecture in \cite{W10} and \cite{W12}. In other words, we are going to prove a multiplicity formula
$$m(\pi_{\varepsilon})=m_{geom}(\pi_{\varepsilon})$$
for all the tempered representations $\pi_{\varepsilon}$ of $G_{\varepsilon}(F)$. Here $m_{geom}(\pi_{\varepsilon})$ is defined in terms of the regular germs of the distribution character $\theta_{\pi_{\varepsilon}}$. Then Theorem \ref{main GU} will follow from the multiplicity formula together with the behavior of the distribution characters on the local $L$-packet. It is worth to mention that Waldspurger's method later has been adapted by Beuzart-Plessis in his proof of the local unitary GGP conjecture (\cite{B12}, \cite{B15}), and by the first named author in his work of the local Ginzburg-Rallis model (\cite{Wan15}, \cite{Wan16}). It has also been used in \cite{B17} and \cite{BW18} for the local multiplicity problems of the Galois model and the generalized Shalika model (but with different proofs of the geometric side of the trace formula).

In order to prove the multiplicity formula, as in all the previous cases, one needs to prove a local trace formula for the model. We refer the readers to Section \ref{sec:trace-formula} for the definitions of the trace formula and the multiplicity formula for the model $(G_{\varepsilon},H_{\varepsilon})$. Our proof for the geometric side of the trace formula is quite similar to the GGP case in \cite{B15} and all the computations are very similar to the GR case in \cite{Wan15}. As a result, we will only give a sketch of the proof without providing details (see Section \ref{sec:proof-geometric}).

As for the spectral side of the trace formula, our proof is quite different from the GGP case and the GR case. The main reason is that unlike the previous cases, we don't have the Gelfand pair condition for the model $(G_{\varepsilon},H_{\varepsilon})$ (although it is expected, see Remark \ref{Gelfand}). To avoid using the Gelfand pair condition, we decompose the Harish-Chandra-Schwartz space $\CC(G(F))$ into two subspaces $\CC(G(F))={}^{\circ}\CC(G(F))\oplus \CC_{ind}(G(F))$ where ${}^{\circ}\CC(G(F))$ corresponds to the discrete series and $\CC_{ind}(G(F))$ corresponds to the induced representations. Then we only need to prove the spectral expansions for these two subspaces. For the subspace ${}^{\circ}\CC(G(F))$, we uses the method developed by Beuzart-Plessis for the Galois model case in \cite{B17} which does not require the Gelfand pair condition. Then for the space $\CC_{ind}(G(F))$, we first prove the multiplicity one result for all the reduced models by applying the multiplicity formulas of the reduced models (Theorem \ref{main theorem for reduced models}). Then in Appendix \ref{sec:appendix}, by applying the orbit method, we can show that the Gelfand pair condition holds for all tempered representations that are not discrete series (Proposition \ref{Gelfand pair}). This allows us to prove the spectral expansion for the subspace $\CC_{ind}(G(F))$ by applying the same argument as in the GGP case. For details, see Section \ref{sec:proof-spectral}.

Now let us discuss the proof of Theorem \ref{main U} (i.e. the unitary group case). The idea is still to prove a multiplicity formula for all the tempered representations of $G_{1,\varepsilon}(F)$ and then prove the theorem by applying the multiplicity formula together with the behavior of the distribution characters on the local $L$-packet. However, the proof of the multiplicity formula is quite different from all the previous cases. To be specific, in all the previous cases, the proof of the multiplicity formula is based on the proof of a local trace formula for the model. However, for the model $(G_{1,\varepsilon},H_{1,\varepsilon})$, it is not clear to us how to prove the local trace formula. There are two reasons: one is that unlike the previous cases, we have more than one open Borel orbit (in fact, we have two of them) for the model $(G_{1,\varepsilon}, H_{1,\varepsilon})$. The second reason is that when we study the slice representation (i.e. the conjugation action of $H_{1,\varepsilon}(F)$ on the normal space of the spherical variety $G_{1,\varepsilon}(F)/H_{1,\varepsilon}(F)$), the regular orbits do not correspond to the orbits under the $G_{1,\varepsilon}(F)$-conjugation. Some $G_{1,\varepsilon}(F)$-conjugation orbits in the normal space will break into two $H_{1,\varepsilon}(F)$-conjugation orbits (both reasons are related to the fact that there are two elements in the quotient $F^{\times}/\Im(N_{E/F})$). As a result, we have to prove the multiplicity formula by a different method. To be specific, we first prove a relation between the model $(G_{1,\varepsilon},H_{1,\varepsilon})$ and the model $(G_{\varepsilon},H_{\varepsilon})$ (see Proposition \ref{GU v.s. U}). Then we prove the multiplicity formula for the model $(G_{1,\varepsilon},H_{1,\varepsilon})$ by applying the multiplicity formula for the model $(G_{\varepsilon},H_{\varepsilon})$ together with Proposition \ref{GU v.s. U}. For details, see Section \ref{sec:unitary}.

The last thing we want to emphasize about the unitary group case is that in the multiplicity formula for the unitary group case, the regular germ at the identity element has coefficient $2$ and this is why we have the summation of the multiplicities over the $L$-packet is equal to $2$. This is different from all the previous cases. In the GGP case, GR case, and the  unitary similitude group case, the coefficient of the regular germ at the identity element is $1$ and the summation of the multiplicities over the $L$-packet is equal to $1$. While in the Galois model and the generalized Shalika model cases, the multiplicity formulas do not contain the regular germ at the identity element and the multiplicities are constant over the $L$-packet. We believe this new phenomenon should be related to either the fact that there are two open Borel orbits, or to the fact that in the slice representation, some $G_{1,\varepsilon}(F)$-conjugation orbits break into two $H_{1,\varepsilon}(F)$-orbits.

\subsection{Organizations of the paper}
In Section \ref{sec:preliminary}, we introduce basic notation and conventions of this paper. We will also discuss the definitions and some basic facts of the Harish-Chandra-Schwartz space and the strongly cuspidal functions. Then in Section \ref{sec:U-GU}, we discuss some local representation theory of the unitary group and the  unitary similitude group.

In Section \ref{sec:model}, we study the analytic and geometric properties of the model the model $(G_{\varepsilon}, H_{\varepsilon})$. In particular, we show that it is a wavefront spherical variety and has polynomial growth as a homogeneous space.
This gives us the weak Cartan decomposition. Then we discuss some estimates for various integrals which will be used in later sections. The proofs of all the results in this section are very similar to the GGP case (\cite{B15}) and the GR case (\cite{Wan16}), we will skip them here.

In Section \ref{sec:trace-formula}, we will state the trace formulas and the multiplicity formulas for the model $(G_{\varepsilon}, H_{\varepsilon})$ and for its reduced models. In Section \ref{sec:proof-geometric}, we give a sketch of the proof of the geometric side of the trace formula. Since the idea of the proof is similar to the GGP case and all the computations are very similar to the GR case, we will skip the details of the proof. Then by induction, we assume that the multiplicity formulas hold for all reduced models. Finally in Section \ref{sec:consequence}, we discuss some applications of the multiplicity formulas for the reduced models. We will postpone the proof of a technical proposition (i.e. Proposition \ref{Gelfand pair}) to Appendix \ref{sec:appendix}.

In Section \ref{sec:proof-main}, we prove our main theorems by assuming the trace formula holds. Then in Section \ref{sec:proof-spectral}, we will prove the trace formula. Finally, in Appendix \ref{sec:appendix}, we prove the technical proposition in Section \ref{sec:trace-formula} (i.e. Proposition~\ref{Gelfand pair}) by applying the orbit method.

\subsection{Acknowledgement}
We would like to thank Dihua Jiang for suggesting us thinking about this problem. Collaboration on this work started at a workshop in the American Institute of Mathematics, and the work was completed at a workshop in Zhejiang University. We thank these institutions for their hospitality and the organizers for the invitations.

\section{Preliminaries} \label{sec:preliminary}
\subsection{Notation and conventions}
Let $F$ be a $p$-adic field, and let $|\cdot|=|\cdot|_F$ be the absolute value on $F$. For every connected reductive algebraic group $G$ defined over $F$, let $A_G$ be the maximal split center of $G$ and let $Z_G$ be the center of $G$.
We denote by $X(G)$ the group of $F$-rational characters of $G$. Define $\Fa_G=$Hom$(X(G),\BR)$, and let $\Fa_{G}^{\ast}=X(G)\otimes_{\BZ} \BR$ be the dual of $\Fa_G$. We define a homomorphism $H_G:G(F)\rightarrow \Fa_G$ by $H_G(g)(\chi)=\log(|\chi(g)|_F)$ for every $g\in G(F)$ and $\chi\in X(G)$. Let $\Fa_{G,F}$ (resp. $\tilde{\Fa}_{G,F}$) be the image of $G(F)$ (resp. $A_G(F)$) under $H_G$. Then $\Fa_{G,F}$ and $\tilde{\Fa}_{G,F}$ are lattices in $\Fa_G$. Let $\Fa_{G,F}^{\vee}=\Hom(\Fa_{G,F},2\pi \BZ)$ and let $\tilde{\Fa}_{G,F}^{\vee}= \Hom(\tilde{\Fa}_{G,F}, 2\pi \BZ)$. Set $\Fa_{G,F}^{\ast}=\Fa_{G}^{\ast}/\Fa_{G,F}^{\vee}$. We can identify $i\Fa_{G,F}^{\ast}$ with the group of unitary unramified characters of $G(F)$ by letting $\lambda(g)=e^{\langle \lambda,H_G(g)\rangle}$ for $\lambda\in i\Fa_{G,F}^{\ast}$ and $g\in G(F)$. For a Levi subgroup $M$ of $G$, let $\Fa_{M,0}^{\ast}$ be the subset of elements in $\Fa_{M,F}^{\ast}$ whose restriction to $\tilde{\Fa}_{G,F}$ is zero. Then we can identify $i\Fa_{M,0}^{\ast}$ with the group of unitary unramified characters of $M(F)$ which is trivial on $Z_G(F)$.

Let $\Fg$ be the Lie algebra of $G$. For a Levi subgroup $M$ of $G$, let $\CP(M)$ be the set of parabolic subgroups of $G$ whose Levi part is $M$, $\CL(M)$ be the set of Levi subgroups of $G$ containing $M$, and let $\CF(M)$ be the set of parabolic subgroups of $G$ containing $M$. We have a natural decomposition $\Fa_M=\Fa_{M}^{G}\oplus \Fa_G$. Denote by $proj_{M}^{G}$ and $proj_G$ the projections of $\Fa_M$ to each factors. For each $P\in \CP(M)$, we can associate a positive chamber $\Fa_{P}^{+}\subset \Fa_M$, and we can also define a function $H_P:G(F)\rightarrow \Fa_M$ by $H_P(g)=H_M(m_g)$ where $g=m_g u_g k_g$ is the Iwasawa decomposition of $g$.

Let $\Vert \cdot\Vert$ be the height function on $G(F)$, taking values in $\BR_{\geq 1}$. Then we define a log-norm $\sigma$ on $G(F)$ by $\sigma(g)=\sup\{1,\log(\Vert g\Vert)\}$. We also define $\sigma_0(g)=\inf_{z\in Z_G(F)} \{\sigma(zg)\}$. Similarly, we can define the log-norm function on $\Fg(F)$ as follows: fixing a basis $\{X_i\}$ of $\Fg(F)$ over $F$, for $X\in \Fg(F)$, let $\sigma(X)=\sup\{1,\sup\{ \log(|a_i|) \}\}$, where $a_i$ is the $X_i$-coordinate of $X$.

Let $M_{min}$ be a minimal parabolic subgroup of $G$. For each $P_{min}\in \CP(M_{min})$, let $\Psi(A_{min},P_{min})$ be the set of positive roots associated to $P_{min}$, and let $\Delta(A_{min},P_{min})\subset \Psi(A_{min},P_{min})$ be the subset of simple roots.

For $x\in G$ (resp. $X\in \Fg$), let $Z_G(x)$ (resp. $Z_G(X)$) be the centralizer of $x$ (resp. $X$) in $G$, and let $G_x$ (resp. $G_X$) be the neutral component of $Z_G(x)$ (resp. $Z_G(X)$). Accordingly, let $\Fg_x$ (resp. $\Fg_X$) be the Lie algebra of $G_x$ (resp. $G_X$).  Denote by $G_{ss}(F)$ the set of semisimple elements in $G(F)$, and by $G_{reg}(F)$ the set of regular semisimple elements in $G(F)$. The Lie algebra versions
are denoted by $\Fg_{ss}(F)$ and $\Fg_{reg}(F)$, respectively. For $x\in G_{ss}(F)$ (resp. $X\in \Fg_{ss}(F)$), let $D^G(x)$ (resp. $D^G(X)$) be the Weyl determinant.

For two complex valued functions $f$ and $g$ on a set $X$ with $g$ taking values in the positive real numbers, we write
$
f(x)\ll g(x),
$
and say that $f$ is {\sl essentially bounded} by $g$, if there exists a constant $c>0$ such that for all $x\in X$, we have
$
| f(x)| \leq cg(x).
$
We say $f$ and $g$ are {\sl equivalent}, which is denoted by
$f(x)\sim g(x)$,
if $f$ is essentially bounded by $g$ and $g$ is essentially bounded by $f$.

\subsection{Measures}
Through this paper, we fix a non-trivial additive character $\psi: F\rightarrow \BC^{\times}$. If $G$ is a connected reductive group, we may fix a non-degenerate symmetric bilinear form $\langle\cdot,\cdot \rangle$ on $\Fg(F)$ that is invariant under $G(F)$-conjugation. For any smooth compactly supported complex valued function $f\in C_{c}^{\infty}(\Fg(F))$, we can define its Fourier transform $f\rightarrow \hat{f}$ to be
\begin{equation}\label{FT}
\hat{f}(X)=\int_{\Fg(F)} f(Y) \psi(\langle X,Y \rangle) \ud Y
\end{equation}
where ${\rm d} Y$ is the self-dual Haar measure on $\Fg(F)$ such that $\hat{\hat{f}}(X)=f(-X)$. Then we get a Haar measure on $G(F)$ such that the Jacobian of the exponential map equals 1. If $H$ is a subgroup of $G$ such that the restriction of the bilinear form to $\Fh(F)$ is also non-degenerate, then we can define the measures on $\Fh(F)$ and $H(F)$ by the same method.

\subsection{Induced representation}
Given a parabolic subgroup $P=MU$ of $G$ and a smooth admissible representation $(\tau,V_{\tau})$ of $M(F)$, let $(I_{P}^{G}(\tau),I_{P}^{G}(V_{\tau}))$ be the normalized parabolic induced representation: $I_{P}^{G}(V_{\tau})$ is the space of smooth functions $e\colon  G(F)\rightarrow V_{\tau}$ such that
$$e(mug)=\delta_P(m)^{1/2} \tau(m)e(g), \;m\in M(F),\; u\in U(F),\; g\in G(F).$$
And the $G(F)$-action is just the right translation.

For $\lambda\in \Fa_{M}^{\ast}\otimes_{\BR}\BC$, let $\tau_{\lambda}$ be the unramified twist of $\tau$, i.e. $\tau_{\lambda}(m)=\exp(\lambda(H_M(m))) \tau(m)$ and let $I_{P}^{G}(\tau_{\lambda})$ be the induced representation. By the Iwasawa decomposition, every function $e\in I_{P}^{G}(\tau_{\lambda})$ is determined by its restriction on $K$, and that space is invariant under the unramified twist. i.e. for any $\lambda$, we can realize the representation $I_{P}^{G}(\tau_{\lambda})$ on the space $I_{K\cap P}^{K}(\tau_K)$ which consists of functions $e_K:K\rightarrow V_{\tau}$ such that
$$e(mug)=\delta_P(m)^{1/2} \tau(m)e(g), \;m\in M(F)\cap K,\; u\in U(F)\cap K,\; g\in K.$$
Here $\tau_K$ is the restriction of $\tau$ to the group $K\cap M(F)$.

\subsection{Harish-Chandra-Schwartz space}
We use $\Xi^G$ to denote the Harish-Chandra function of $G$. We refer the readers to Proposition 1.5.1 of \cite{B15} for the basic properties of the function $\Xi^G$.

For $f\in C^{\infty}(G(F))$ and $d\in \BR$, let
$$p_d(f)=\sup_{g\in G(F)} \{ |f(g)|\Xi^G(g)^{-1}\sigma(g)^d\}.$$
We define the Harish-Chandra-Schwartz space to be
$$\CC(G(F))=\{f\in C^{\infty}(G(F))\colon p_d(f)<\infty,\forall d>0\}.$$
We also need the weak Harish-Chandra-Schwartz space $\CC^w(G(F))$. For $d>0$, let
$\CC^{w}_{d}(G(F))=\{f\in C^{\infty}(G(F))\colon p_{-d}(f)<\infty\}.$
Then we define
$$\CC^w(G(F))=\cup_{d>0} \CC^{w}_{d}(G(F)).$$

Given a unitary character $\chi$ of $Z_G(F)$,
we define the Harish-Chandra-Schwartz space $\CC(\zg,\chi)$
(resp.\ the weak Harish-Chandra-Schwartz space $\CC^w(\zg,\chi)$) to be the Mellin transform of the space $\CC(G(F))$ (resp. $\CC^w(G(F))$) with respect to $\chi$.

\subsection{The Harish-Chandra-Plancherel formula}
Fix a unitary character $\chi$ of $Z_G(F)$. For every $M\in \CL(M_{min})$, fix an element $P\in \CP(M)$. Let $\Pi_2(M,\chi)$ be the set of discrete series of $M(F)$ whose central character agrees with $\chi$ on $Z_G(F)$. Then $i\Fa_{M,0}^{\ast}$ acts on $\Pi_2(M,\chi)$ by the unramified twist. Let $\{\Pi_2(M,\chi)\}$ be the set of orbits under this action. For every orbit $\CO$, and for a fixed $\tau\in \CO$, let $i\Fa_{\CO}^{\vee}$ be the set of $\lambda\in i\Fa_{M,0}^{\ast}$ such that the representation $\tau$ and $\tau_{\lambda}$ are equivalent, which is a finite set. For $f\in \CC(\zg,\chi^{-1})$, the Harish-Chandra-Plancherel formula (\cite{W03}) is
\begin{eqnarray*}
f(g)&=&\sum_{M\in \CL(M_{min})} |W^M||W^G|^{-1} \sum_{\CO\in \{\Pi_2(M,\chi)\}} |i\Fa_{\CO}^{\vee}|^{-1}\\
&&\int_{i\Fa_{M,0}^{\ast}} \mu(\tau_{\lambda}) \tr(I_{P}^{G}(\tau_{\lambda})(g^{-1}) I_{P}^{G}(\tau_{\lambda})(f))\ud\lambda.
\end{eqnarray*}
Here $\mu(\tau_{\lambda})$ is the Plancherel measure and $W^G$ (resp. $W^M$) is the Weyl group of $G$ (resp. $M$).

To simplify our notation, let $\Pi_{temp}(G,\chi)$ be the union of $I_{P}^{G}(\tau)$ for $P=MN$, $M\in \CL(M_{min})$, $\tau\in \CO$ and $\CO\in \{\Pi_2(M,\chi)\}$. We define a Borel measure ${\rm d}\pi$ on $\Pi_{temp}(G,\chi)$ such that
\begin{align*}
 & \int_{\Pi_{temp}(G,\chi)}\varphi(\pi)\ud\pi\\
=&\sum_{M\in \CL(M_{min})} |W^M|\cdot |W^G|^{-1} \sum_{\CO\in \{\Pi_2(M,\chi)\}} |i\Fa_{\CO}^{\vee}|^{-1} \int_{i\Fa_{M,0}^{\ast}} \varphi(I_{P}^{G}(\tau_{\lambda})) \ud\lambda
\end{align*}
for every compactly supported function $\varphi$ on $\Pi_{temp}(G,\chi)$. Here by saying a function $\varphi$ is compactly supported on $\Pi_{temp}(G,\chi)$ we mean that it is supported on finitely many orbits $\CO$. Then the Harish-Chandra-Plancherel formula above becomes
$$f(g)=\int_{\Pi_{temp}(G,\chi)} \tr(\pi(g^{-1}) \pi(f)) \mu(\pi)\ud\pi.$$

We also need the metrical Paley-Wiener Theorem. Let $C^{\infty}(\Pi_{temp}(G,\chi))$ be the space of functions $\pi\in \Pi_{temp}(G,\chi)\rightarrow T_{\pi}\in \End(\pi)^{\infty}$ such that it is smooth on every orbits $\CO$ as functions from $\CO$ to $\End(\pi)^{\infty}\simeq \End(\pi_K)^{\infty}$. We define $\CC(\Pi_{temp}(G,\chi))$ to be a subspace of $C^{\infty}(\Pi_{temp}(G,\chi))$ consisting of those $T:\pi\rightarrow T_{\pi}$ such that $T$ is nonzero on finitely many orbits $\CO$. Then the metrical Paley-Wiener Theorem (\cite{W03}) states that we have an isomorphism between $\CC(\zg,\chi^{-1})$ and $\CC(\Pi_{temp}(G,\chi))$ given by
$$f\in \CC(\zg,\chi^{-1})\rightarrow T_f:=(\pi\in \Pi_{temp}(G,\chi)\rightarrow \pi(f)\in \End(\pi)^{\infty}),$$
$$T\in \CC(\Pi_{temp}(G,\chi))\rightarrow f_T(g)=\int_{\Pi_{temp}(G,\chi)} \tr(\pi(g^{-1}) T_{\pi}) \mu(\pi) \ud\pi.$$

Finally, we introduce two subspaces of $\CC(\zg,\chi^{-1})$. Let $\Pi_{2}(G,\chi)$  be a subset of $\Pi_{temp}(G, \chi)$ consisting of all the discrete series, and let $\Pi_{temp, ind}(G,\chi)=\Pi_{temp}(G,\chi)\smallsetminus\Pi_{2}(G,\chi)$. Let $\CC(\Pi_{2}(G,\chi))$ (resp. $\CC(\Pi_{temp,ind}(G,\chi))$) be a subspace of $\CC(\Pi_{temp}(G,\chi))$ consisting of those $T:\pi\rightarrow T_{\pi}$ such that $T$ is supported on the set $\Pi_{2}(G,\chi)$ (resp. $\Pi_{temp, ind}(G,\chi)$). Then any element $T\in \CC(\Pi_{temp}(G,\chi))$ can be uniquely written as $T=T_1+T_2$ with $T_1\in \CC(\Pi_{2}(G,\chi))$ and $T_2\in \CC(\Pi_{temp,ind}(G,\chi))$. In other words, we have $\CC(\Pi_{temp}(G,\chi)) =\CC(\Pi_{2}(G,\chi))\oplus \CC(\Pi_{temp,ind}(G,\chi))$.

Under the metrical Paley-Wiener Theorem, the subspaces $\CC(\Pi_{2}(G,\chi))$ and $\CC(\Pi_{temp,ind}(G,\chi))$ of $\CC(\Pi_{temp}(G,\chi))$ allow us to define the corresponding subspaces of $\CC(\zg,\chi^{-1})$. We define
\begin{align*}
{}^{\circ}\CC(\zg,\chi^{-1})=&\{f\in \CC(\zg,\chi^{-1})\colon T_f\in \CC(\Pi_{2}(G,\chi))\},\\
\CC_{ind}(\zg,\chi^{-1})=&\{f\in \CC(\zg,\chi^{-1})\colon T_f\in \CC(\Pi_{temp,ind}(G,\chi))\}.
\end{align*}
Then we have
$$\CC(\zg,\chi^{-1})={}^{\circ}\CC(\zg,\chi^{-1})\oplus \CC_{ind}(\zg,\chi^{-1}).$$
It is easy to see that the space ${}^{\circ}\CC(\zg,\chi^{-1})$ is spanned by the matrix coefficients of all the discrete series of $G(F)$ with central character $\chi^{-1}$.

\subsection{Quasi-characters}
If $\theta$ is a smooth function defined on $G_{reg}(F)$, invariant under $G(F)$-conjugation. We say it is a quasi-character on $G(F)$ if for every $x\in G_{ss}(F)$, there is a good neighborhood $\omega_x$ of $0$ in $\Fg_x(F)$, and for every $\CO\in Nil(\Fg_x)$, there exists $c_{\theta,\CO}(x)\in \BC$ such that
\begin{equation}\label{germ 2}
\theta(x\exp(X))=\sum_{\CO\in Nil(\Fg_x)} c_{\theta,\CO}(x) \hat{j}(\CO,X)
\end{equation}
for every $X\in \omega_{x,reg}$. Here $\hat{j}(\CO,X)$ is the function on $\Fg_{reg}(F)$ representing the Fourier transform of the nilpotent orbital integral, $Nil(\Fg_x)$ is the set of nilpotent orbits of $\Fg_x(F)$, and we refer the readers to Section 3 of \cite{W10} for
the definition of good neighborhood. The coefficients $c_{\theta,\CO}(x)$ are called the germs of $\theta$ at $x$. We define
$$c_{\theta}(x)=\frac{1}{|Nil_{reg}(\Fg_x)|} \sum_{\CO\in Nil_{reg}(\Fg_x)} c_{\theta,\CO}$$
to be the average of the germs associated to the regular nilpotent orbits of $\Fg_x$. In particular, $c_{\theta}(x)=0$ if $G_x$ is not quasi-split. For any admissible representation $\pi$ of $G(F)$, the distribution character $\theta_{\pi}$ is a quasi-character.

Similarly, if $\theta$ is a smooth function on $\Fg_{reg}(F)$ and invariant under $G(F)$-conjugation,
we say it is a quasi-character on $\Fg(F)$ if for every $X\in \Fg_{ss}(F)$, there exists an open $G_X$-domain $\omega_X$ in $\Fg_X(F)$, containing $0$, and for every $\CO \in Nil(\Fg_X)$, there exists $c_{\theta,\CO}(X)\in \BC$ such that
\begin{equation}\label{germ 3}
\theta(X+Y)=\sum_{\CO\in Nil(\Fg_X)} c_{\theta,\CO}(X) \hat{j}(\CO,Y)
\end{equation}
for every $Y\in \omega_{X,reg}$. As in the group case, we use $c_{\theta}(X)$ to denote the average of the germs associated to the regular nilpotent orbits.

\subsection{Strongly cuspidal functions}
We say a function $f\in \CC(Z_G(F)\backslash G(F),\chi)$ is strongly cuspidal if for every proper parabolic subgroup $P=MU$ of $G$, and for every $x\in M(F)$, we have
\begin{equation}\label{cuspidal 1}
\int_{U(F)} f(xu)\ud u=0.
\end{equation}
We will denote by $\CC_{scusp}(Z_G(F)\backslash G(F),\chi)$ (resp. $C_{c,scusp}^{\infty}(Z_G(F)\backslash G(F),\chi)$) the subspace of strongly cuspidal functions in $\CC(Z_G(F)\backslash G(F),\chi)$ (resp. $C_{c}^{\infty}(Z_G(F)\backslash G(F),\chi)$). It is easy to see that ${}^{\circ}\CC(Z_G(F)\backslash G(F),\chi)\subset \CC_{scusp}(Z_G(F)\backslash G(F),\chi)$. Hence we have
$$\CC_{scusp}(Z_G(F)\backslash G(F),\chi)={}^{\circ}\CC(Z_G(F)\backslash G(F),\chi)\oplus \CC_{ind,scusp}(Z_G(F)\backslash G(F),\chi)$$
where $\CC_{ind,scusp}(Z_G(F)\backslash G(F),\chi)$ is the subspace of strongly cuspidal functions in $\CC_{scusp}(Z_G(F)\backslash G(F),\chi)$.

Similarly, we say a function $f\in C_{c}^{\infty}(\Fg(F))$ is strongly cuspidal if for every proper parabolic subgroup $P=MU$, and for every $X\in \Fm(F)$, we have
$$\int_{\Fu(F)} f(X+Y)\ud Y=0.$$

We then define various objects associated to strongly cuspidal functions. Geometrically, for $f\in \CC_{scusp}(\zg,\chi)$ (resp. $f\in C_{c,scusp}^{\infty}(\Fg(F))$), one can define a quasi-character $\theta_f$ of $G(F)$ (resp. $\Fg(F)$) via the weighted orbital integral. We refer the readers to Section 5.2 of \cite{B15} for details of the definition. Spectrally, let $\CX(G,\chi^{-1})$ (resp. $\CX_{ell}(G,\chi^{-1})$) be a set of virtual tempered representations (resp. elliptic representations) of $G(F)$ with central character $\chi^{-1}$ defined in Section 2.7 of \cite{B15}. As in Section 5.4 of loc. cit., for $\pi\in \CX(G,\chi^{-1})$, we can define a map
$$f\in \CC_{scusp}(\zg,\chi)\mapsto \theta_f(\pi)\in \BC$$
via the weighted character (this map is denoted by $f\mapsto \hat{\theta}_f(\pi)$ in loc. cit.). We refer the readers to Section 5 of \cite{B15} for basic properties of strongly cuspidal functions.

\subsection{Some local representation theory of the groups $\RU_{2n}$ and $\GU_{2n}$} \label{sec:U-GU}
In this subsection, we recall some results of the local representation theory of the unitary group and the unitary similitude  group. For $i=1,2$, fix $\varepsilon_i\in F^{\times}$ with $\eta_{E/F}(\varepsilon_i)=(-1)^{i-1}$ as before. We start with the unitary group case. The following theorem follows from the endoscopic classification of unitary group in \cite{M15} and \cite{KMSW}.
Denote $\Pi_{irr,temp}(\RU_{2n})=\Pi_{irr,temp}(\RU(J_{2n,\varepsilon_1}))\cup \Pi_{irr,temp}(\RU(J_{2n,\varepsilon_2}))$ to be set of all the irreducible tempered representations of $\RU(J_{2n,\varepsilon_1})(F)$ and $\RU(J_{2n,\varepsilon_2})(F)$.

\begin{thm}[\cite{M15}, \cite{KMSW}]\label{endoscopy U}
$\Pi_{irr,temp}(\RU_{2n})$ is a disjoint union of finite sets (i.e. the local tempered Vogan $L$-packets)
$$\Pi_{irr,temp}(\RU_{2n})=\cup_{\phi} \Pi_{\phi}$$
where $\phi$ runs over all the tempered $L$-parameters of $\RU_{2n}(F)$ and $\Pi_{\phi}=\Pi_{\phi}(\RU(J_{2n,\varepsilon_1}))\cup \Pi_{\phi}(\RU(J_{2n,\varepsilon_2}))$ consisting of a finite number of tempered representations such that the following conditions hold.
\begin{enumerate}
\item \label{item:endoscopy-1} For all $\phi$, the distribution character
$$\theta_{\Pi_{\phi}(\RU(J_{2n,\varepsilon_i}))}:= \sum_{\pi_{\varepsilon_i}\in \Pi_{\phi}(\RU(J_{2n,\varepsilon_i}))} \theta_{\pi_{\varepsilon_i}}$$
is stable for $i=1,2$.
\item \label{item:generic} Let $\psi$ be any generic character of the maximal unipotent subgroup of $\RU(J_{2n,\varepsilon_1})$ (up to conjugation, there are two such characters). For all $\phi$, the $L$-packet $\Pi_{\phi}(\RU(J_{2n,\varepsilon_1}))$ contains a unique generic representation with respect to $\psi$ (note that $\RU(J_{2n,\varepsilon_2})(F)$ is not quasi-split).
\item \label{item:endoscopy-3} For a pair $g_i\in \RU(J_{2n,\varepsilon_i})(F)$, we write $g_1\leftrightarrow g_2$ if they have the same characteristic polynomial (i.e. they are in the same stable conjugacy class). Then for any pair $g_i\in \RU(J_{2n,\varepsilon_i})(F)_{reg}$ with $g_1\leftrightarrow g_2$, we have
    $$\theta_{\Pi_{\phi}(\RU(J_{2n,\varepsilon_1}))}(g_1)=-\theta_{\Pi_{\phi}(\RU(J_{2n,\varepsilon_2}))}(g_2).$$
\end{enumerate}
\end{thm}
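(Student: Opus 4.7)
Since Theorem \ref{endoscopy U} is attributed to Mok \cite{M15} (quasi-split case) and Kaletha--Minguez--Shin--White \cite{KMSW} (extension to pure inner forms), I can only sketch how a proof in the style of those references would go. The overall plan is to realize $\RU_{2n}$ as a twisted endoscopic group of $\GL_{2n}(E) \rtimes \theta$, where $\theta$ is the composition of the Galois involution with the outer automorphism $g \mapsto J_{2n}\,{}^t g^{-1} J_{2n}^{-1}$, and to construct each $L$-packet $\Pi_\phi$ by transferring a self-conjugate-dual tempered representation of $\GL_{2n}(E)$ along this correspondence.

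I would proceed in three steps. First, associate to $\phi$ a self-conjugate-dual tempered representation $\Pi_\phi$ of $\GL_{2n}(E)$ via the local Langlands correspondence for general linear groups (Harris--Taylor, Henniart, Scholze). Second, using Waldspurger's transfer theorem together with the fundamental lemma for twisted endoscopy (Ng\^o, Laumon--Ng\^o, Chaudouard--Laumon), show that the twisted character of $\Pi_\phi$ transfers to a unique stable tempered virtual character $\theta_\phi^{st}$ on the quasi-split unitary group $\RU(J_{2n,\varepsilon_1})(F)$. Third, a global input coming from the stabilization of the twisted trace formula on $\GL_{2n}(E) \rtimes \theta$ forces $\theta_\phi^{st}$ to be a nonnegative integral combination of irreducible tempered characters, and the set of its constituents is declared to be the packet $\Pi_\phi(\RU(J_{2n,\varepsilon_1}))$. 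This yields item (1) by construction.

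For item (2), the Whittaker normalization of the transfer (Kottwitz--Shelstad) combined with the generic form of Shahidi's tempered $L$-packet conjecture, established in this context as part of Mok's classification, pins down a unique $\psi$-generic member of $\Pi_\phi(\RU(J_{2n,\varepsilon_1}))$ for each of the two relevant Whittaker data. For item (3), one extends the construction to the non-quasi-split inner form $\RU(J_{2n,\varepsilon_2})$ using pure inner twists as in \cite{KMSW}; the Langlands--Shelstad transfer factors on the non-quasi-split form differ from those on the quasi-split form by the sign character attached to the non-trivial class in $H^1(F, \RU(J_{2n,\varepsilon_1}))$ corresponding to the pure inner twist, and this produces the sign $-1$ between the two stable characters at matched regular semisimple elements.

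The principal obstacle underlying all three claims is the stabilization of the twisted trace formula for $\GL_{2n}(E) \rtimes \theta$ together with its accompanying fundamental lemma, since these global ingredients are what guarantee the existence and the internal structure (stability, component-group parametrization) of the packets. A secondary but technically delicate issue is the careful bookkeeping of Whittaker data and transfer factors required to upgrade the a priori endoscopic sign (a character of the component group) into the specific sign $-1$ appearing between the two inner forms in item (3).
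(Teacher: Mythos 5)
The paper does not prove this theorem; it is stated as a black-box citation to Mok \cite{M15} and Kaletha--Minguez--Shin--White \cite{KMSW}, followed only by a remark attributing item (2) specifically to Konno \cite{K02} (who derives the generic packet conjecture from the endoscopic identity, the latter then supplied by \cite{M15} and \cite{KMSW}). Your sketch of the twisted-endoscopic construction of the packets, the stabilization of the twisted trace formula, the Whittaker normalization, and the Kottwitz sign governing the character identity between the two pure inner forms is the standard route and is consistent with what those references do, so there is no divergence from the paper to report. The one small point worth adjusting is attribution for item (2): the paper credits the uniqueness of the $\psi$-generic member to Konno's earlier argument rather than deriving it internally from Mok's proof of Shahidi's tempered $L$-packet conjecture.
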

Remark that Item \eqref{item:generic} of the theorem was proved in \cite{K02} by assuming the endoscopic identity holds. The endoscopic identity for unitary group has been proved in \cite{M15} and \cite{KMSW}.

Now we consider the unitary similitude group case. Recall that for $\varepsilon\in F^{\times}$, $\GU_{2n,\varepsilon}=\GU (J_{2n,\varepsilon})$ and $\RU_{2n,\varepsilon}=\RU (J_{2n,\varepsilon})$. We have
$$\GU_{2n,\varepsilon}(F)/Z_{\GU_{2n,\varepsilon}}(F)\RU_{2n,\varepsilon}(F)\cong \lam(\GU_{2n,\varepsilon}(F))/\lam(Z_{\GU_{2n,\varepsilon}}(F)) =F^\times/\Im(N_{E/F}) \cong \BZ/2\BZ,$$
$$X:=\Hom(\GU_{2n,\varepsilon}(F)/Z_{\GU_{2n,\varepsilon}}(F)\RU_{2n,\varepsilon}(F),\BC^\times)=\{1, \lambda_{E/F}\}$$
where $\lambda_{E/F}=\eta_{E/F}\circ \lambda$ is a character of $\GU_{2n,\varepsilon}(F)$. The proofs of following three lemmas can be found in \cite{Xu16} for instance.

\begin{lem}[Corollary 6.7 \cite{Xu16}]\label{GU to U 1}
Let $\pi_\varepsilon$ be an irreducible smooth representation of $\GU_{2n,\varepsilon}(F)$ and let $\pi_\varepsilon\vert_{\RU_{2n,\varepsilon} (F)}$ be the restriction of $\pi_\varepsilon$ to $\RU_{2n,\varepsilon}(F)$. Then $\pi_\varepsilon\vert_{\RU_{2n,\varepsilon}(F)}$ is multiplicity-free. Moreover,  $\pi_\varepsilon\vert_{\RU_{2n,\varepsilon}(F)}$ is irreducible if and only if  $\pi_\varepsilon\ncong \pi_\varepsilon\otimes \lambda_{E/F}$.
If $\pi_\varepsilon\cong \pi_\varepsilon\otimes \lambda_{E/F}$, then $\pi_\varepsilon\vert_{\RU_{2n,\varepsilon}(F)}=\pi\oplus\pi\circ\Ad(g)$ for some irreducible representation $\pi$ of $\RU_{2n,\varepsilon}(F)$, where $g\in \GU_{2n,\varepsilon}(F)$ with $\lambda(g)\notin \Im(N_{E/F})$.
\end{lem}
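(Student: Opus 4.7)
The plan is to apply Clifford theory to the inclusion of the normal subgroup $H := Z_{\GU_{2n,\varepsilon}}(F)\cdot\RU_{2n,\varepsilon}(F)$ of index $2$ in $G := \GU_{2n,\varepsilon}(F)$, whose quotient is identified with $X = \{1,\lambda_{E/F}\}$. Since $Z_{\GU_{2n,\varepsilon}}(F)$ acts on $\pi_\varepsilon$ by the scalar central character, decomposing $\pi_\varepsilon|_{\RU_{2n,\varepsilon}(F)}$ is equivalent to decomposing $\pi_\varepsilon|_H$, so I will focus on the latter.

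The main calculation is via Frobenius reciprocity combined with Schur's lemma. Because $H$ has finite index in $G$, smooth induction satisfies the projection-formula isomorphism $\Ind_H^G(\pi_\varepsilon|_H) \cong \pi_\varepsilon\otimes\BC[G/H] \cong \pi_\varepsilon\oplus(\pi_\varepsilon\otimes\lambda_{E/F})$ as $G$-representations. Hence
$$\dim\End_H(\pi_\varepsilon|_H) \;=\; \dim\Hom_G(\pi_\varepsilon,\pi_\varepsilon) + \dim\Hom_G(\pi_\varepsilon,\pi_\varepsilon\otimes\lambda_{E/F}) \;=\; 1+\delta,$$
where $\delta\in\{0,1\}$ by Schur's lemma and $\delta=1$ precisely when $\pi_\varepsilon\cong\pi_\varepsilon\otimes\lambda_{E/F}$. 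In particular $\pi_\varepsilon|_H$ is irreducible iff $\pi_\varepsilon\not\cong\pi_\varepsilon\otimes\lambda_{E/F}$, establishing the irreducibility criterion.

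In the reducible case, fix $g\in G\smallsetminus H$ (equivalently $\lambda(g)\notin\Im(N_{E/F})$) and let $V_1\subset\pi_\varepsilon$ be any nonzero irreducible smooth $H$-subrepresentation, which exists because $\pi_\varepsilon|_H$ has finite length. By normality of $H$, the subspace $g\cdot V_1$ is again an irreducible $H$-subrepresentation, and since $g^2\in H$ the sum $V_1+g\cdot V_1$ is stable under both $H$ and $g$, hence under $G=H\sqcup gH$. Irreducibility of $\pi_\varepsilon$ forces $V_1+g\cdot V_1=\pi_\varepsilon$, while $V_1\subsetneq\pi_\varepsilon$ together with irreducibility of $V_1$ forces $V_1\cap g\cdot V_1=0$. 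Setting $\pi:=V_1$, we obtain $\pi_\varepsilon|_H = \pi\oplus(\pi\circ\Ad(g))$ as desired.

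Finally, multiplicity-freeness amounts to $V_1\not\cong g\cdot V_1$ as $H$-representations: otherwise $\End_H(\pi_\varepsilon|_H)\supset \Mat_2(\BC)$ would force $\dim\End_H(\pi_\varepsilon|_H)\geq 4$, contradicting the bound $1+\delta\leq 2$ from the Frobenius reciprocity calculation. The main technical point to check is that Frobenius reciprocity and the identification of $\Ind_H^G(\pi_\varepsilon|_H)$ are valid in the smooth admissible setting, but since $[G:H]=2$, smooth induction is simply the direct sum indexed by the two cosets with the obvious twisted action, so no genuine subtlety arises and the index-two Clifford calculus carries over unchanged from the finite-dimensional case.
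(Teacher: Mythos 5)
The paper cites this lemma from \cite{Xu16} (Corollary 6.7) without reproducing a proof, so there is no in-paper argument to compare against; what you have written is a self-contained proof via index-two Clifford theory, and it is essentially correct. The reduction from $\RU_{2n,\varepsilon}(F)$ to $H=Z_{\GU_{2n,\varepsilon}}(F)\RU_{2n,\varepsilon}(F)$ is valid because the center acts by a scalar, so the two restrictions have the same invariant subspaces and the same intertwining operators; the Frobenius-reciprocity/projection-formula computation $\dim\End_H(\pi_\varepsilon|_H)=1+\delta$ is the right engine; and the direct decomposition $V_1\oplus g\cdot V_1$ plus the $\Mat_2(\BC)$ dimension count for multiplicity-freeness are both sound. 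Your closing remark that smooth induction from an open finite-index subgroup behaves like the finite-group case is the correct justification for both adjunctions used.

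The one place you should not just assert is the sentence ``which exists because $\pi_\varepsilon|_H$ has finite length.'' Finite length of the restriction is exactly the technical input that is not formal: it follows because $\pi_\varepsilon$ is finitely generated (indeed cyclic) over $G$, hence $\pi_\varepsilon|_H$ is finitely generated over $H$ (adjoin the translates of a generating set by the two coset representatives), and a finitely generated admissible smooth representation of a $p$-adic group has finite length. You should either spell this out or cite it (it is also the content of the general Clifford theory for $p$-adic groups in Gelbart--Knapp, of which Xu's Corollary 6.7 is a specialization). Once that sentence is justified, the existence of an irreducible $H$-subrepresentation $V_1$ follows by taking the bottom of a composition series, and the rest of your argument goes through as written. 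One could also note explicitly that Schur's lemma applies to $H$ (an open finite-index subgroup of a reductive $p$-adic group, hence an $l$-group for which the admissible Schur lemma holds over $\BC$), since you use $\End_H(V_1)=\BC$ in the multiplicity-freeness step.
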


\begin{lem}[Corrollay 6.4 \cite{Xu16}]\label{GU to U 2}
If $\pi$ is an irreducible smooth representation of $\RU_{2n,\varepsilon}(F)$, then there exists an irreducible smooth representation $\pi_{\varepsilon}$ of $\GU_{2n,\varepsilon}(F)$, which is unique up to twisting by the characters $\chi\circ \lambda$ where $\chi$ is any character of $F^{\times}$, such that $\pi$ is a direct summand of $\pi_\varepsilon\vert_{\RU_{2n,\varepsilon}(F)}$.
\end{lem}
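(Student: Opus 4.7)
The plan is to establish existence and uniqueness via Clifford–Mackey theory for the normal inclusion $N := \RU_{2n,\varepsilon}(F) \triangleleft \GU_{2n,\varepsilon}(F) =: G_\varepsilon(F)$, using the fact (recalled just above the lemma) that $G_\varepsilon(F) / Z_{G_\varepsilon}(F)\cdot N \cong \BZ/2\BZ$ and that the similitude character $\lam\colon G_\varepsilon(F)\to F^\times$ identifies $G_\varepsilon(F)/N$ with a subgroup of $F^\times$ (in fact, all of $F^\times$, since an easy diagonal construction realizes any similitude factor).

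First I would extend $\pi$ to an irreducible representation $\tilde\pi$ of the intermediate subgroup $H := Z_{G_\varepsilon}(F)\cdot N$, which is a normal subgroup of index $2$ in $G_\varepsilon(F)$. Since $Z_{G_\varepsilon}(F)\cap N = Z_N(F) \cong E^1$, any choice of extension of the central character of $\pi$ from $E^1$ to $Z_{G_\varepsilon}(F) \cong E^\times$ (such an extension exists because $E^1 \hookrightarrow E^\times$ is a closed subgroup and characters extend) produces a well-defined $\tilde\pi$ on $H$, irreducible because $Z_{G_\varepsilon}(F)$ acts by a scalar. Now pick $g_0\in G_\varepsilon(F)\smallsetminus H$, so that $\lam(g_0)\notin \Im(N_{E/F})$, and split into two cases depending on whether the conjugate $\pi^{g_0}$ is isomorphic to $\pi$ as an $N$-representation. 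In the non-isomorphic case, Mackey's irreducibility criterion gives that $\pi_\varepsilon := \mathrm{Ind}_H^{G_\varepsilon(F)}\tilde\pi$ is irreducible, and by Frobenius reciprocity $\pi$ is a direct summand of its restriction to $N$. In the isomorphic case, the cocycle obstruction to extending $\tilde\pi$ from the index-two normal subgroup $H$ to $G_\varepsilon(F)$ lies in $H^2(\BZ/2\BZ,\BC^\times) = 0$, so $\tilde\pi$ admits an irreducible extension $\pi_\varepsilon$ to $G_\varepsilon(F)$, and $\pi_\varepsilon|_N = \pi$ by construction.

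For uniqueness, suppose $\pi_\varepsilon$ and $\pi'_\varepsilon$ are two irreducible representations of $G_\varepsilon(F)$ each containing $\pi$ as a direct summand when restricted to $N$. By Clifford theory applied to the normal subgroup $N \triangleleft G_\varepsilon(F)$, any two irreducible representations sharing an $N$-constituent must differ by a character of $G_\varepsilon(F)/N$. Since $G_\varepsilon(F)/N \hookrightarrow F^\times$ via the similitude character $\lam$, every such character has the form $\chi\circ\lam$ for some character $\chi$ of $F^\times$, proving the asserted uniqueness up to twist.

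The substantive point in this plan is the verification that the cocycle obstruction vanishes in the case where $\pi^{g_0}\cong\pi$; the rest is bookkeeping with the centers and the similitude character. This vanishing is essentially automatic here because the relevant quotient is cyclic of order two, so the only real subtlety is confirming that one can arrange the extension of the central character so that the extended representation $\tilde\pi$ itself satisfies $\tilde\pi^{g_0}\cong\tilde\pi$ on all of $H$ (not merely on $N$); this is arranged, if necessary, by further twisting the chosen extension of the central character by a character of $Z_{G_\varepsilon}(F)/Z_N(F)$, which is exactly the ambiguity absorbed in the final statement as a twist by $\chi\circ\lam$.
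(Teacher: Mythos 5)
The paper does not prove this lemma itself: it is quoted verbatim from Corollary 6.4 of \cite{Xu16}, so there is no internal argument to compare against. Your Clifford--Mackey strategy is the natural one and is the same kind of argument used in \cite{Xu16}. The existence part of your proposal is correct: extending the central character from $E^1=Z_{G_\varepsilon}(F)\cap\RU_{2n,\varepsilon}(F)$ to $E^\times=Z_{G_\varepsilon}(F)$ produces $\tilde\pi$ on the open index-$2$ subgroup $H=Z_{G_\varepsilon}(F)\RU_{2n,\varepsilon}(F)$, and the induce-or-extend dichotomy together with $H^2(\BZ/2\BZ,\BC^\times)=0$ is standard. One small remark: the worry in your final paragraph is unnecessary --- since $g_0$ centralizes $Z_{G_\varepsilon}$, any isomorphism $\pi\cong\pi^{g_0}$ of $N$-representations already intertwines $\tilde\pi$ with $\tilde\pi^{g_0}$, so no further twist of the chosen central-character extension is needed to make $\tilde\pi$ $g_0$-stable.

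The uniqueness step, however, has a genuine gap. You invoke ``Clifford theory applied to the normal subgroup $N\triangleleft G_\varepsilon(F)$'' to conclude that two irreducibles of $G_\varepsilon(F)$ sharing an $N$-constituent differ by a character of $G_\varepsilon(F)/N$. The standard form of that statement (Gelbart--Knapp) requires the normal subgroup to be \emph{open}, whereas $N=\RU_{2n,\varepsilon}(F)$ is not open in $\GU_{2n,\varepsilon}(F)$: the quotient $G_\varepsilon(F)/N\cong F^\times$ via $\lam$ is not discrete. The correct argument reduces to the open finite-index subgroup $H$ in two steps, which you should make explicit. If $\pi_\varepsilon$ and $\pi'_\varepsilon$ both contain $\pi$ on restriction to $N$, their central characters agree on $E^1$, so their ratio is a character of $E^\times/E^1\cong N_{E/F}(E^\times)$; extend it to a character $\chi_0$ of $F^\times$ and twist $\pi_\varepsilon$ by $\chi_0\circ\lam$ (using $\lam\vert_{Z_{G_\varepsilon}}=N_{E/F}$) to equalize central characters without altering the restriction to $N$. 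After that, both restrictions to $H$ contain the same $\tilde\pi$, and Clifford theory for the index-$2$ open normal subgroup $H$ shows the two representations differ by a character of $G_\varepsilon(F)/H\cong\BZ/2\BZ$, namely $1$ or $\eta_{E/F}\circ\lam$, hence by a further $\chi'\circ\lam$. As written, the uniqueness claim skips this central-character reduction and is therefore not justified.
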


\begin{lem}[Lemma 6.9 \cite{Xu16}]\label{GU to U 3}
Suppose that $\pi_{\varepsilon}$ is an irreducible smooth unitary representation of $\GU_{2n,\varepsilon}(F)$. Then $\pi_{\varepsilon}$ is a discrete series if and only if its restriction to $\RU_{2n,\varepsilon}(F)$ is a discrete series (not necessarily irreducible).
The same is true for tempered representations.
\end{lem}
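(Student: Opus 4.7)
The plan is to reduce the lemma to a coset-splitting computation of matrix-coefficient integrals, using the characterization of discrete series (resp.\ tempered) unitary representations by square-integrability (resp.\ essential boundedness by the Harish-Chandra $\Xi$ function) of matrix coefficients modulo the center, together with the index-$2$ inclusion $Z_{\GU_{2n,\varepsilon}}(F)\cdot \RU_{2n,\varepsilon}(F)\subset \GU_{2n,\varepsilon}(F)$ recorded just above Lemma~\ref{GU to U 1}.

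First I would observe that $Z_{\GU_{2n,\varepsilon}}(F)\cap \RU_{2n,\varepsilon}(F)=Z_{\RU_{2n,\varepsilon}}(F)$ (both being the norm-one scalar matrices in $E^{\times}\cdot I$); combined with $\GU(F)/Z_{\GU}(F)\RU(F)\cong \BZ/2\BZ$, this yields the measurable decomposition
$$Z_{\GU}(F)\bs \GU(F) \;=\; Z_{\RU}(F)\bs \RU(F) \;\sqcup\; g_{0}\cdot\bigl(Z_{\RU}(F)\bs \RU(F)\bigr),$$
for any $g_{0}\in \GU(F)$ with $\lam(g_{0})\notin \Im(N_{E/F})$, so that, after suitable normalization, the Haar measure on the left restricts to the Haar measure on each of the two cosets on the right. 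Splitting the matrix-coefficient integral along this decomposition yields, for any $v,w\in V_{\pi_{\varepsilon}}$ and using unitarity of $\pi_{\varepsilon}$,
$$\int_{Z_{\GU}\bs \GU}\bigl|\langle \pi_{\varepsilon}(x)v,w\rangle\bigr|^{2}\,dx = \int_{Z_{\RU}\bs \RU}\bigl|\langle \pi_{\varepsilon}(h)v,w\rangle\bigr|^{2}\,dh + \int_{Z_{\RU}\bs \RU}\bigl|\langle \pi_{\varepsilon}(h)v,\pi_{\varepsilon}(g_{0}^{-1})w\rangle\bigr|^{2}\,dh.$$
Each summand is a matrix-coefficient integral of $\pi_{\varepsilon}|_{\RU}$, and as $v,w$ range over $V_{\pi_{\varepsilon}}$ these exhaust all such matrix coefficients. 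Combined with Lemma~\ref{GU to U 1}, which says $\pi_{\varepsilon}|_{\RU}$ is a finite multiplicity-free direct sum of irreducibles, this shows that $\pi_{\varepsilon}$ is square-integrable modulo $Z_{\GU}(F)$ if and only if every irreducible component of $\pi_{\varepsilon}|_{\RU}$ is square-integrable modulo $Z_{\RU}(F)$, proving the discrete-series equivalence.

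For the tempered case the same coset-splitting applies with ``square-integrability'' replaced by ``essential boundedness by $\Xi$''. This reduces the lemma to the comparison $\Xi^{\GU}(h)\sim \Xi^{\RU}(h)$ for $h\in Z_{\RU}(F)\bs \RU(F)$. Since $\GU$ and $\RU$ share a common special maximal compact subgroup and the same minimal parabolic modulo the split central torus of $\GU$, this comparison follows from the standard Harish-Chandra integral formula for $\Xi$ on the spherical principal series of the two groups. The only nontrivial step in the whole argument is this $\Xi$-comparison, which requires careful bookkeeping of the modular characters of the two Borel subgroups relative to the Iwasawa decomposition; once it is in place, the tempered statement follows exactly as in the discrete-series case.
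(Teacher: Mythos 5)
The paper does not prove this lemma at all; it is imported verbatim as [Xu16, Lemma~6.9], so there is no internal proof to compare against. Your proposal supplies a direct argument and it is essentially correct. The index-$2$ decomposition $\GU_{2n,\varepsilon}(F)=Z_{\GU}(F)\RU(F)\sqcup g_0 Z_{\GU}(F)\RU(F)$ and the identification $Z_{\GU}(F)\cap\RU(F)=Z_{\RU}(F)$ are both right, the resulting two-term splitting of the matrix-coefficient integral over $Z_{\GU}(F)\backslash\GU(F)$ is valid (the cosets are open, so the invariant measure restricts to a Haar measure on each), and since every smooth vector of a $p$-adic representation is fixed by a compact open subgroup, the extra translate $\pi_\varepsilon(g_0^{-1})w$ causes no $K$-finiteness trouble. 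Combined with Lemma~\ref{GU to U 1} (finite multiplicity-free restriction) this does give both directions of the discrete-series equivalence, and the tempered case follows by the same coset bookkeeping once one knows $\Xi^{\GU}\vert_{\RU}\sim\Xi^{\RU}$ together with the standard bi-translation estimate $\Xi^{\GU}(g_0 h)\sim\Xi^{\GU}(h)$ for fixed $g_0$.

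Two remarks. First, you correctly isolate the $\Xi$-comparison as the one genuinely technical step; your sketch (common maximal compact, compatible minimal parabolics, bookkeeping of modular characters) is plausible but not a proof, and a cleaner route is to note that $Z_{\GU}(F)\RU(F)$ is an open subgroup of finite index in $\GU(F)$ and is a central-compact quotient of $Z_{\GU}(F)\times\RU(F)$, so $\Xi^{\GU}\vert_{Z_{\GU}\RU}\sim\Xi^{Z_{\GU}\RU}\sim\Xi^{\RU}$; this kind of statement appears explicitly in the Harish-Chandra function toolbox of \cite{B15}. Second, you should state explicitly the characterization you are using for temperedness, namely that an irreducible unitary smooth representation is tempered if and only if all its matrix coefficients are $\ll\Xi^G$; this is standard (Waldspurger's Plancherel paper) but it is the load-bearing fact and deserves a sentence. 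With these two points made explicit, the argument is a clean, self-contained alternative to Xu's lemma, phrased entirely at the level of matrix coefficients rather than (as one would expect in \cite{Xu16}) via Jacquet modules and exponents; the payoff of your route is that it treats discrete series and tempered uniformly and makes the role of the index-$2$ coset decomposition completely transparent.
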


In order to prove our main theorems, we need to assume that following conjecture holds. This conjecture is the endoscopy classification of the  unitary similitude groups. By the endoscopic classification of the unitary groups (\cite{M15}, \cite{KMSW}), this conjecture is expected from Xu's work \cite{Xu16} on the reduction from the similitude classical groups to classical groups.
Denote $\Pi_{irr,temp}(\GU_{2n})=\Pi_{irr,temp}(\GU(J_{2n,\varepsilon_1}))\cup \Pi_{irr,temp}(\GU(J_{2n,\varepsilon_2}))$ to be the set of all the irreducible admissible tempered representations of $\GU(J_{2n,\varepsilon_1})(F)$ and $\GU(J_{2n,\varepsilon_2})(F)$.

\begin{conj}\label{endoscopy}
$\Pi_{irr,temp}(\GU_{2n})$ is a disjoint union of finite sets (i.e. the local tempered Vogan $L$-packets)
$$\Pi_{irr,temp}(\GU_{2n})=\cup_{\phi} \Pi_{\phi}$$
where $\phi$ runs over all the tempered $L$-parameters of $\GU_{2n}(F)$ and $\Pi_{\phi}=\Pi_{\phi}(\GU(J_{2n,\varepsilon_1}))\cup \Pi_{\phi}(\GU(J_{2n,\varepsilon_2}))$ consisting of a finite number of tempered representations such that the following conditions hold.
\begin{enumerate}
\item  For all $\phi$, the distribution character
$$\theta_{\Pi_{\phi}(\GU(J_{2n,\varepsilon_i}))}:= \sum_{\pi_{\varepsilon_i}\in \Pi_{\phi}(\GU(J_{2n,\varepsilon_i}))} \theta_{\pi_{\varepsilon_i}}$$ is stable for $i=1,2$.
\item \label{item:conjecture-2} For all $\phi$, the $L$-packet $\Pi_{\phi}(\GU(J_{2n,\varepsilon_1}))$ contains a unique generic representation (note that $\GU(J_{2n,\varepsilon_2})(F)$ is not quasi-split).
\item \label{item:conjecture-3} For $g_i\in \GU(J_{2n,\varepsilon_i})(F)$, we write $g_1\leftrightarrow g_2$ if they have the same characteristic polynomial (i.e. they are in the same stable conjugacy class). Then
    $$\theta_{\Pi_{\phi}(\GU(J_{2n,\varepsilon_1}))}(g_1)=-\theta_{\Pi_{\phi}(\GU(J_{2n,\varepsilon_2}))}(g_2)$$
    for all pairs $g_i\in \GU(J_{2n,\varepsilon_i})(F)_{reg}$ with $g_1\leftrightarrow g_2$.
\end{enumerate}
\end{conj}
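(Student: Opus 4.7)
The plan is to deduce Conjecture \ref{endoscopy} from Theorem \ref{endoscopy U} together with the restriction/extension machinery of Lemmas \ref{GU to U 1}--\ref{GU to U 3}, following the reduction from similitude classical groups to classical groups developed by Xu \cite{Xu16}. The key observation is that an $L$-parameter $\phi$ for $\GU_{2n}(F)$ restricts, via the natural $L$-group map dual to $\RU_{2n}\hookrightarrow\GU_{2n}$, to an $L$-parameter $\phi^{\flat}$ for $\RU_{2n}(F)$, and two $\GU$-parameters restrict to the same $\RU$-parameter if and only if they differ by a twist of the form $\chi\circ\lambda$ for a character $\chi$ of $F^{\times}$. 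Hence every tempered $\RU$-parameter lifts to a tempered $\GU$-parameter which is unique up to such a central twist.

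First I would construct the packets. For each tempered $L$-parameter $\phi$ of $\GU_{2n}(F)$, define $\Pi_{\phi}(\GU(J_{2n,\varepsilon_{i}}))$ to consist of the isomorphism classes of irreducible tempered $\pi_{\varepsilon_{i}}$ of $\GU(J_{2n,\varepsilon_{i}})(F)$ whose central character is the one prescribed by $\phi$ and all of whose $\RU(J_{2n,\varepsilon_{i}})(F)$-constituents (which are tempered by Lemma \ref{GU to U 3}) lie in $\Pi_{\phi^{\flat}}(\RU(J_{2n,\varepsilon_{i}}))$. One checks using Lemma \ref{GU to U 1} that all such constituents of a single $\pi_{\varepsilon_{i}}$ automatically lie in the same $\RU$-packet, and using Lemma \ref{GU to U 2} that every irreducible tempered representation of $\GU(J_{2n,\varepsilon_{i}})(F)$ appears in exactly one such packet. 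This yields the required partition of $\Pi_{irr,temp}(\GU_{2n})$.

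To verify property (1), I would use Clifford theory to express $\theta_{\Pi_{\phi}(\GU(J_{2n,\varepsilon_{i}}))}\vert_{\RU(J_{2n,\varepsilon_{i}})(F)}$ as a positive integer multiple of the stable character $\theta_{\Pi_{\phi^{\flat}}(\RU(J_{2n,\varepsilon_{i}}))}$, the multiplicity reflecting how many $\GU$-representations sit above each $\RU$-representation. Stability on the open subset $Z_{\GU}(F)\RU(J_{2n,\varepsilon_{i}})(F)\cap \GU(J_{2n,\varepsilon_{i}})(F)_{\mathrm{reg}}$ is then immediate, and it propagates to all of $\GU(J_{2n,\varepsilon_{i}})(F)_{\mathrm{reg}}$ by $\GU$-conjugation invariance of the character. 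Property (2) follows because the unique generic representation in $\Pi_{\phi^{\flat}}(\RU(J_{2n,\varepsilon_{1}}))$ provided by Theorem \ref{endoscopy U} extends via Lemma \ref{GU to U 2} to a unique generic representation of $\GU(J_{2n,\varepsilon_{1}})(F)$ with the central character prescribed by $\phi$. Property (3) is deduced from its $\RU$-analogue as follows: given a stable pair $g_{i}\in\GU(J_{2n,\varepsilon_{i}})(F)_{\mathrm{reg}}$ whose characteristic polynomials agree, the similitude factors necessarily coincide, so multiplying by a common central element translates $(g_{1},g_{2})$ into a stable pair in $\RU(J_{2n,\varepsilon_{1}})(F)\times \RU(J_{2n,\varepsilon_{2}})(F)$, and the endoscopic identity of Theorem \ref{endoscopy U} applies downstairs.

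The main obstacle I expect lies in the careful bookkeeping for representations with $\pi_{\varepsilon}\cong\pi_{\varepsilon}\otimes\lambda_{E/F}$, where the restriction to $\RU(J_{2n,\varepsilon})(F)$ splits into two inequivalent pieces (Lemma \ref{GU to U 1}). Here the component group attached to $\phi$ on the $\GU$ side is enlarged by a $\BZ/2\BZ$ factor reflecting the quotient $F^{\times}/\Im(N_{E/F})$, and one must match this with the Vogan component group on the dual-group side so that the packet sizes and the internal parametrisation come out correctly; this is the analogue of Kaletha's rigidification, adapted to the similitude setting. A secondary technical point, already visible in (3), is that stable conjugacy in the similitude group is subtler than in the unitary group because of the role of the similitude factor, so a small compatibility verification at non-elliptic regular elements is needed. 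These are precisely the issues addressed in \cite{Xu16} for the classical similitude groups, so a complete proof is expected to follow by adapting those arguments, which is presumably why the authors content themselves with taking the statement as a working hypothesis.
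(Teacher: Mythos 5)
This statement is \emph{not} proved in the paper: it is explicitly labelled Conjecture~\ref{endoscopy}, and the authors state they ``need to assume that [the] following conjecture holds,'' justifying it only by saying it ``is expected from Xu's work~\cite{Xu16}.'' There is therefore no in-paper proof against which to compare your argument; what you are doing is sketching the expected reduction, which is a legitimate exercise, and your overall scheme (build the $\GU$-packets from the $\RU$-packets of Theorem~\ref{endoscopy U} via Lemmas~\ref{GU to U 1}--\ref{GU to U 3}, in the spirit of~\cite{Xu16}) is indeed the route the authors gesture at.

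That said, the sketch as written has a genuine gap concentrated at regular elements $g$ with $\lambda(g)\notin\Im(N_{E/F})$, and this gap affects both property~(1) and property~(3). For~(1), you obtain stability on $Z_{\GU}(F)\RU(J_{2n,\varepsilon_i})(F)\cap \GU(J_{2n,\varepsilon_i})(F)_{\mathrm{reg}}$ and then claim it propagates to all of $\GU(J_{2n,\varepsilon_i})(F)_{\mathrm{reg}}$ ``by $\GU$-conjugation invariance of the character.'' But $\GU$-conjugation preserves the similitude factor, so it cannot move an element from the index-two subgroup $Z_{\GU}(F)\RU(J_{2n,\varepsilon_i})(F)$ to its non-trivial coset; those elements are simply never reached by the Clifford-theoretic argument. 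For~(3), you claim that once $\lambda(g_1)=\lambda(g_2)$, ``multiplying by a common central element translates $(g_1,g_2)$ into a stable pair'' in the unitary groups. But the center acts through $\lambda(z)=N_{E/F}(z)$, so this translation is only available when the common similitude factor lies in $\Im(N_{E/F})$; the coset $\lambda(g)\notin\Im(N_{E/F})$ is again untouched. (It is not vacuous: the characteristic polynomial determines $\lambda$, and stable classes with $\lambda\notin\Im(N_{E/F})$ do occur and do have representatives in both inner forms.) You do acknowledge ``a small compatibility verification at non-elliptic regular elements,'' but that mislocates the issue --- it is not about ellipticity but about the coset of the similitude factor, and it is precisely the part that requires a substantive new input (e.g.\ pairing off $\pi_{\varepsilon}$ with $\pi_{\varepsilon}\otimes\lambda_{E/F}$ so that character contributions on the bad coset cancel, or a twisted endoscopic analysis). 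Since the paper itself never claims a proof, leaving this as a flagged open point is acceptable, but the proposal should not present the propagation to the full regular set as immediate.
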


\section{The model $(G_{\varepsilon},H_{\varepsilon})$} \label{sec:model}
\subsection{The spherical pair $(G_{\varepsilon},H_{\varepsilon})$}
Let $(G,H_{0}U)$ be the pair $(G_{\varepsilon},H_{0,\varepsilon}U_{\varepsilon})$ defined in Section \ref{sec:introduction}  for some $\varepsilon\in F^{\times}$, and let $G_0=M_{\varepsilon}$.
Then the spherical pair $(G,H)$ can be viewed as the parabolic induction of the spherical pair $(G_0,H_0)$. We will use $\omega\otimes\xi$ to denote the character $\omega_{\varepsilon}\otimes \xi_{\varepsilon}$.
For simplicity, we omit the subscript $\varepsilon$ here.
We say a parabolic subgroup $\bar{Q}$ of $G$ is good if $H\bar{Q}$ is a Zariski open subset of $G$. This is equivalent to say that $H(F)\bar{Q}(F)$ is open in $G(F)$ under the analytic topology. The proof of the next proposition is very similar to the GR model case (Proposition 4.2 of \cite{Wan16}), so we will skip it here. The only thing we want to point out is that the proposition will only hold for the  unitary similitude group case as we will have two open Borel orbits (which correspond to $F^{\times}/\Im(N_{E/F})$) for the unitary group case.

\begin{prop}\label{spherical}
\begin{enumerate}
\item  \label{item:prop-spherical-1}  There exist minimal parabolic subgroups of $G$ that are good and they are all conjugated to each other by some elements in $H(F)$. If $\bar{P}_{min}=M_{min}\bar{U}_{min}$ is a good minimal parabolic subgroup, we have $H\cap \bar{U}_{min}=\{1\}$ and the complement of $H(F)\bar{P}_{min}(F)$ in $G(F)$ has zero measure.
\item  \label{item:prop-spherical-2} A parabolic subgroup $\bar{Q}$ of $G$ is good if and only if it contains a good minimal parabolic subgroup.
\item  \label{item:prop-spherical-3} Let $\bar{P}_{min}=M_{min}\bar{U}_{min}$ be a good minimal parabolic subgroup and let $A_{min}=A_{M_{min}}$ be the split center of $M_{min}$. Set
$$A_{min}^{+}=\{a\in A_{min}(F)\colon |\alpha(a)| \geq 1 \; \text{for}\; \text{all} \; \alpha\in \Psi(A_{min},\bar{P}_{min})\}.$$
Then we have
\begin{enumerate}
\item $\sigma_0(h)+\sigma_0(a)\ll \sigma_0(ha)$ for all $a\in A_{min}^{+}$, $h\in H(F)$.
\item $\sigma(h)\ll \sigma(a^{-1}ha)$ and $\sigma_0(h)\ll \sigma_0(a^{-1}ha)$ for all $a\in A_{min}^{+}$, $h\in H(F)$.
\end{enumerate}
\item \eqref{item:prop-spherical-1}, \eqref{item:prop-spherical-2} and \eqref{item:prop-spherical-3} also hold for the pair $(G_0,H_0)$.
\end{enumerate}
\end{prop}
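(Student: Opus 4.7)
The plan is to follow the strategy used for the Ginzburg-Rallis model in \cite{Wan16} (Proposition 4.2), with the matrix calculations adapted to the unitary similitude setting.

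For part \eqref{item:prop-spherical-1}, I would first exhibit an explicit good minimal parabolic. Working inside the opposite parabolic $\bar{P}_\varepsilon = M_\varepsilon \bar{U}_\varepsilon$, pick a Borel $B_{M_\varepsilon}$ of $M_\varepsilon \simeq \GL_2(E) \times \GU(J_{2,\varepsilon})$ chosen so that its intersection with the diagonally embedded $H_{0,\varepsilon}$ is as small as possible (a maximal torus, compatible with the diagonal embedding). Set $\bar{P}_{min} := B_{M_\varepsilon}\bar{U}_\varepsilon$. A direct matrix computation then shows that $H \cap \bar{U}_{min} = \{1\}$ and $\dim H + \dim \bar{P}_{min} = \dim G$, so $H\bar{P}_{min}$ is Zariski open in $G$ and its complement in $G(F)$ has measure zero. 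For the $H(F)$-conjugacy statement, good minimal parabolics correspond bijectively to open $H(F)$-orbits on the flag variety of $G$; the essential geometric input, distinguishing the similitude setting from the unitary one, is that there is a single such open orbit over $F$, because the potential splitting into two orbits parametrized by $F^{\times}/\Im(N_{E/F})$ is absorbed by the similitude character $\lam$.

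For part \eqref{item:prop-spherical-2}, the ``if'' direction is immediate since $H\bar{P}_{min} \subset H\bar{Q}$ is open. For the converse, sphericity of $(G,H)$ forces $H$ to have finitely many orbits on $G/\bar{Q}$ with a unique open one; choosing any minimal parabolic inside $\bar{Q}$ whose $H$-orbit maps onto this open orbit produces a good minimal parabolic contained in $\bar{Q}$. For part \eqref{item:prop-spherical-3}, I would invoke the weak Cartan-type decomposition $G(F) \subset H(F) A_{min}^{+} K$ (up to a compact set), which holds because $H \bks G$ is a wavefront spherical variety of polynomial growth as recorded in Section \ref{sec:model}. The first inequality $\sigma_0(h) + \sigma_0(a) \ll \sigma_0(ha)$ then follows by expressing $ha$ in this decomposition and using positivity of the roots on $A_{min}^{+}$; the second inequality $\sigma(h) \ll \sigma(a^{-1}ha)$ reflects that conjugation by $a \in A_{min}^{+}$ expands on $U_{min}(F)$ and contracts on $\bar{U}_{min}(F)$, so the size of $h$ can only be amplified, never hidden, after conjugation.

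Part (4) transfers from $(G,H)$ to $(G_0, H_0) = (M_\varepsilon, H_{0,\varepsilon})$ by restriction: intersecting a good parabolic of $G$ with $M_\varepsilon$ yields a good parabolic of $M_\varepsilon$, and since $A_{min} \subset M_\varepsilon$ the log-norm estimates restrict directly. The main conceptual obstacle throughout is verifying the uniqueness of the open $H$-orbit on the flag variety used in \eqref{item:prop-spherical-1}; this is precisely the geometric feature that will later produce total multiplicity $1$ in Theorem \ref{main GU}, in contrast to the factor of $2$ appearing in Theorem \ref{main U}, where two open Borel orbits occur. Once that uniqueness is in place, the remaining verifications are routine matrix and root-system bookkeeping, parallel to those carried out in the Ginzburg-Rallis case.
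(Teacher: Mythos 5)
Your overall plan --- exhibit an explicit good minimal parabolic inside $\bar{P}_\varepsilon$, argue openness by a dimension count, identify the uniqueness of the open $H(F)$-orbit over $F$ as the crucial geometric input that separates the similitude case from the unitary case, and then obtain the norm estimates --- is the right one and matches what the paper is implicitly doing by deferring to Proposition 4.2 of \cite{Wan16}. You also correctly flag that the ``absorption'' of $F^{\times}/\Im(N_{E/F})$ by the similitude character $\lambda$ is exactly the point the authors emphasize in their one-line remark after the proposition. Two concrete problems, though, prevent the proposal from being right as written.

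First, the dimension count is off. You claim $\dim H + \dim \bar{P}_{min} = \dim G$, but for $G = \GU(J_{6,\varepsilon})$ we have $\dim G = 37$, $\dim H = \dim H_0 + \dim U_\varepsilon = 5 + 12 = 17$, and $\dim \bar{P}_{min} = 22$, so $\dim H + \dim \bar{P}_{min} = 39$. The open-orbit criterion is $\dim H + \dim \bar{P}_{min} - \dim(H \cap \bar{P}_{min}) = \dim G$, and $H \cap \bar{P}_{min} = H_0 \cap B_{M_\varepsilon}$ is a $2$-dimensional torus, not a finite group. Showing that with a correct choice of $B_{M_\varepsilon}$ this intersection is \emph{exactly} a $2$-dimensional torus (in particular that $H_0 \cap N_{B_{M_\varepsilon}} = \{1\}$, which together with $U_\varepsilon \cap \bar{P}_\varepsilon = \{1\}$ gives $H \cap \bar{U}_{min} = \{1\}$) is the content of the explicit matrix computation that has to be done here; the equality you wrote would force that intersection to be finite and cannot be established.

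Second, your argument for part \eqref{item:prop-spherical-3} is circular relative to the paper's logical structure. You invoke the weak Cartan decomposition $G(F) \subset H(F)A_{min}^{+}\CK$ from Section~\ref{sec:model}, but that is Proposition~\ref{cartan 1}, which appears \emph{after} the present proposition and is itself deduced from it (the paper explicitly says ``By the proposition above, $X = H\backslash G$ is a spherical variety \dots we can prove the weak Cartan decomposition''). In the source argument you are meant to be following (Proposition 4.2 of \cite{Wan16}), the inequalities in (3a) and (3b) are established directly from the explicit realization of $H(F)$, $A_{min}(F)$, and the root-space decomposition of $\Fg$ under $A_{min}$ --- one bounds matrix entries of $ha$ and $a^{-1}ha$ by hand, using that $U_\varepsilon$ sits inside $U_{min}$ (contracted by $\Ad(a)$ for $a \in A_{min}^{+}$) and that $H_0$ meets $M_{min}$ in a torus --- and only afterwards does one derive polynomial growth and the weak Cartan decomposition. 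Replacing the direct estimate by an appeal to the weak Cartan decomposition therefore assumes what it is required to prove. Your heuristic about expansion/contraction of root spaces under $\Ad(a)$ is the right intuition, but it must be turned into a self-contained matrix-norm estimate, not routed through a later result.

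Finally, you correctly identify ``uniqueness of the open $H$-orbit on the flag variety'' as the main obstacle, but the proposal leaves it as a named obstacle rather than verifying it; in a complete proof this is precisely the step that must be carried out (it reduces to a Hilbert~90-type computation of $F$-points of the open Borel orbit, showing that for the similitude group the relevant $H^1$ obstruction is killed by twisting with $\lambda$), and the paper's remark is warning that this is exactly what breaks down for $(G_{1,\varepsilon}, H_{1,\varepsilon})$.
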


By the proposition above, $X=H\backslash G$ is a spherical variety of $G$ and $X_0=H_0\backslash G_0$ is a spherical variety of $G_0$. Let $\bar{P}_0=M_0\bar{U}_0$ be a good minimal parabolic subgroup of $G_0$, and let $A_0=A_{M_0}$ be the maximal split center of $M_0$. Set
$$A_{0}^{+}=\{a\in A_0(F)\colon |\alpha(a)| \geq 1, \; \forall \alpha\in \Psi(A_0,\bar{P}_0) \}.$$
By a similar argument as in the Ginzburg-Rallis model case (Proposition 4.4 of \cite{Wan16}), we can prove the weak Cartan decomposition of $X$ and $X_0$.

\begin{prop}\label{cartan 1}
\begin{enumerate}
\item There exists a compact subset $\CK_0\subset G_0(F)$ such that $G_0(F)=H_0(F)A_{0}^{+} \CK_0$.
\item There exists a compact subset $\CK\subset G(F)$ such that $G(F)=H(F)A_{0}^{+} \CK$.
\end{enumerate}
\end{prop}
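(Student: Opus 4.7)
The plan is to adapt to the present setting the proof of Proposition~4.4 of \cite{Wan16} for the Ginzburg--Rallis model, taking Proposition~\ref{spherical} as the geometric input. I would prove the two parts together, with part~(2) reduced to part~(1) through the parabolic structure $P=G_0\,U$ inside $G$ and the crucial containment $U\subset H$.

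For part~(1), I would start from the good minimal parabolic $\bar P_0=M_0\bar U_0$ of $G_0$ supplied by Proposition~\ref{spherical}\eqref{item:prop-spherical-1}, with the properties that $H_0(F)\bar P_0(F)$ is open of full measure in $G_0(F)$ and $H_0\cap \bar U_0=\{1\}$. Combined with the Iwasawa decomposition $G_0(F)=\bar P_0(F)K_0^{\max}$ for a good maximal compact subgroup $K_0^{\max}$, the problem reduces to absorbing an element of $\bar P_0(F)/(H_0(F)\cap \bar P_0(F))$ into the product $A_0^+\cdot(\text{compact})$. Since $M_0$ is a Levi of a minimal parabolic, $A_0=A_{M_0}$ is already its maximal split torus, and $M_0(F)$ admits a Cartan-type decomposition $M_0(F) = K_{M_0}\,A_0\,K_{M_0}$ with $K_{M_0}$ compact. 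Writing the $\bar P_0$-component via this $M_0$-decomposition and then moving the residual unipotent and compact-coset pieces past $A_0^+$ using the normalization relations inside $\bar P_0$ and the good Iwahori structure, one arrives at a decomposition $g=h\,a\,k$ with $h\in H_0(F)$, $a\in A_0^+$, and $k$ in a fixed compact set $\CK_0$.

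The main obstacle in part~(1) is showing that the compact factor $k$ and the potential ``waste'' between the $H_0(F)$- and $A_0^+$-components stay uniformly bounded as $g$ varies over all of $G_0(F)$; the decomposition produced from the structural steps above is a priori only bounded fiberwise. This is precisely where Proposition~\ref{spherical}\eqref{item:prop-spherical-3} enters: part~(a), namely $\sigma_0(h)+\sigma_0(a)\ll \sigma_0(ha)$, rules out compensations between $H_0(F)$ and $A_0^+$ at infinity, while part~(b) controls norms under conjugation by $A_0^+$. A standard compactness and patching argument identical in structure to \cite[Prop.~4.4]{Wan16}, together with the finiteness of relative Weyl group cosets used to place $a$ into the positive chamber, then produces a uniform compact $\CK_0$ that works for all $g$.

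For part~(2), I would bootstrap directly from part~(1). Using the Iwasawa decomposition $G(F)=G_0(F)\,U(F)\,K$ for a good maximal compact $K\subset G(F)$, any $g\in G(F)$ can be written as $g=g_0\,u\,k$. Applying part~(1) to $g_0$ yields $g_0=h_0\,a\,k_0$ with $h_0\in H_0(F)$, $a\in A_0^+$, and $k_0\in \CK_0$. Since $k_0\in G_0=M$ normalises $U$, the equality $k_0\,u=u_1\,k_0$ holds for some $u_1\in U(F)$, and since $a\in A_0^+\subset A_M(F)$ also normalises $U$, one has $a\,u_1=u_2\,a$ with $u_2\in U(F)$. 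Putting everything together, $g=(h_0\,u_2)\,a\,(k_0\,k)$, and because $U(F)\subset H(F)$ we have $h_0\,u_2\in H_0(F)\,U(F)=H(F)$. Taking $\CK:=\CK_0\,K$ completes the decomposition $g=h\,a\,k'$ with $h\in H(F)$, $a\in A_0^+$, $k'\in \CK$. Note that this reduction step is clean precisely because of the containment $U\subset H$, which is special to this model and avoids any further use of the polynomial growth estimates at the level of $G$.
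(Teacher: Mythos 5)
Your part (2) is correct and gives exactly the right reduction to part (1): the key observations are that $U\subset H$ and that both $k_0\in\CK_0\subset G_0=M$ and $a\in A_0^+\subset M_0\subset M$ normalise $U$, so the unipotent factor from Iwasawa can be conjugated through and absorbed into $H$ without any control on its size. This matches the way the paper would derive part (2) from part (1).

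Part (1), however, has a genuine gap. Writing $g=\bar p\,k$ with $\bar p\in\bar P_0(F)$, $k\in K_0^{\max}$ does \emph{not} reduce matters to absorbing $\bar P_0(F)/(H_0(F)\cap\bar P_0(F))$ into $A_0^+\cdot(\text{compact})$: in the desired decomposition $g=h\,a\,k'$ the element $h\in H_0(F)$ is allowed to leave $\bar P_0(F)$, and that freedom is exactly what is needed and what the proposed reduction throws away. Even as a sufficient condition the plan fails: since $H_0\cap\bar U_0=\{1\}$ one has $H_0\cap\bar P_0=H_0\cap M_0$, so $\bar P_0/(H_0\cap\bar P_0)$ still contains a full copy of $\bar U_0(F)$, which is unbounded and lies neither in $H_0(F)$ nor in $A_0^+\cdot(\text{compact})$. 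Moreover, ``moving the residual unipotent pieces past $A_0^+$'' works in the wrong direction: the roots of $\bar U_0$ make up $\Psi(A_0,\bar P_0)$ and $|\alpha(a)|\ge 1$ on $A_0^+$, so conjugation by $a\in A_0^+$ \emph{expands} $\bar U_0$ rather than contracting it. The argument that actually establishes part (1) — which the paper's own discussion and the cited Proposition 4.4 of \cite{Wan16} indicate — either invokes the wavefront/polynomial-growth framework for spherical varieties, or (more hands-on, and in the spirit of the Ginzburg--Rallis model) exploits the concrete product structure of $(G_0,H_0)$: one identifies $H_0(F)\backslash G_0(F)$ with (essentially) $\GL_2(E)$ via $(g_1,g_2)\mapsto g_1^{-1}g_2$, applies the usual Cartan decomposition there, and then checks that the image of $A_0^+$ (for the good $\bar P_0$, which ``mismatches'' the two factors) covers a chamber of the diagonal torus of $\GL_2(E)$ up to a compact set. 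The estimates from Proposition~\ref{spherical}\eqref{item:prop-spherical-3} feed into the polynomial-growth route but cannot by themselves repair the reduction you have written down.
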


\subsection{Some estimates}
In this subsection, we are going to state several estimates for various integrals which will be used in later sections. The proofs of these estimates are very similar to the GR model case (Sections 4.3 and 4.4 of \cite{Wan17}). We will skip them here.

\begin{lem}\label{major 2}
\begin{enumerate}
\item There exist $\epsilon>0$ and $d>0$ such that the integrals
$$\int_{Z_{H_0}(F)\backslash H_0(F)} \Xi^{G_0}(h_0)e^{\epsilon \sigma_0(h_0)} \ud h_0
\text{ and }
\int_{Z_H(F)\backslash H(F)} \Xi^G(h) \sigma_0(h)^{-d} \ud h$$
are absolutely convergent.
\item For all $\delta>0$, there exists $\epsilon>0$ such that the integral
$$\int_{\zh} \Xi^{G}(h)e^{\epsilon \sigma_0(h)} (1+ |\iota(h)|)^{-\delta}\ud h$$
is absolutely convergent. Here $\iota\colon H(F)\rightarrow F$ is a homomorphism defined by $\iota(\left(\begin{smallmatrix} I_2&X&Y\\&I_{2}&X^{\ast} \\&&I_2	\end{smallmatrix}\right) \left(\begin{smallmatrix}g&&\\&h&\\&&\lambda(h)g^*	\end{smallmatrix}\right))=\tr_{E/F}(\tr(X))$. In particular, we have $\xi(h)=\psi(\iota(h))$ for all $h\in H(F)$.
\end{enumerate}
\end{lem}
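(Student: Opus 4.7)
The plan is to reduce all three integrals, via the weak Cartan decompositions $G(F)=H(F)A_0^+\CK$ and $G_0(F)=H_0(F)A_0^+\CK_0$ of Proposition \ref{cartan 1}, to integrals over the positive chamber $A_0^+$ weighted by the Jacobian of the decomposition (relative to the measure on the quotient $Z_H(F)\backslash H(F)$ or $Z_{H_0}(F)\backslash H_0(F)$). On $A_0^+$ the Harish-Chandra function admits the sharp estimate
$$\Xi^G(k_1 a k_2) \ll \delta_{\bar P_0}^{1/2}(a)(1+\sigma(a))^{d_0},$$
and similarly for $\Xi^{G_0}$, so the convergence question reduces to a comparison of exponents between the modular character, the Jacobian, and the polynomial or exponential weights in $\sigma_0$.

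For the first integral in part (1), the pair $(G_0,H_0)$ is purely reductive with $H_0 \simeq \GU(J_{2,\varepsilon})$ diagonally embedded in the Levi $M_\varepsilon$. A direct root analysis on $A_0^+$ shows that $\delta_{\bar P_0}^{1/2}$ decays strictly faster than the Jacobian coming from the quotient by $H_0$, leaving a strictly positive exponential gap. This gap accommodates a factor $e^{\epsilon\sigma_0(h_0)}$ for all sufficiently small $\epsilon>0$. For the second integral in part (1), we write $h=h_0 u$ with $h_0\in H_0(F)$ and $u\in U(F)$ and integrate over $U(F)$ first. An Iwasawa-type majorization bounds $\int_{U(F)} \Xi^G(u h_0)\,du$ by $\Xi^{M_\varepsilon}(h_0)$ times a polynomial in $\sigma(h_0)$, reducing the problem to the first integral after inserting a polynomial weight $\sigma_0(h)^{-d}$ for $d$ large enough to absorb the polynomial from the Iwasawa bound.

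For part (2), the new ingredient is the factor $(1+|\iota(h)|)^{-\delta}$. With the decomposition $h=h_0 u$, we have $\iota(h)=\iota(u)$ by definition, and in linear coordinates on $U(F)$ the map $u\mapsto \iota(u)$ is essentially a nonzero linear form. Hence $(1+|\iota(u)|)^{-\delta}$ kills polynomially in a nontrivial direction on $U(F)$, which when combined with the Iwasawa majorant of $\Xi^G(u h_0)$ supplies polynomial decay of arbitrarily large order in $\sigma(u)$, hence exponential room $e^{\epsilon\sigma_0(u)}$ for small $\epsilon$. The remaining reductive integral over $H_0(F)$ falls within the first case of part (1), and using Proposition \ref{spherical}(\ref{item:prop-spherical-3}) to dominate $\sigma_0(h)$ by $\sigma_0(u)+\sigma_0(h_0)$ assembles the two exponential margins into the desired weight $e^{\epsilon\sigma_0(h)}$ on all of $H(F)$.

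The main obstacle is the careful bookkeeping of exponents in the intermediate regime where $u$ and $h_0$ are of comparable size, so that neither the unipotent decay from $(1+|\iota(u)|)^{-\delta}$ nor the reductive exponential gap dominates on its own. One must choose $\epsilon$ strictly smaller than both gaps and verify, via the conjugation estimates in Proposition \ref{spherical}(\ref{item:prop-spherical-3}), that the substitution $\sigma_0(h)\leftrightarrow \sigma_0(u)+\sigma_0(h_0)$ is compatible with the chosen $\epsilon$ and $\delta$. Beyond this accounting, the argument runs parallel to Sections 4.3--4.4 of \cite{Wan17} for the Ginzburg--Rallis model on $\GL_6$; the only substantive modification is replacing the $\GL_2$-component of the Levi of $H$ by $\GU(J_{2,\varepsilon})$, which changes the root arithmetic but not the structural skeleton of the argument.
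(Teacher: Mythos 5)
The proposal breaks down at the very first step, and the error propagates. The integrals in Lemma~\ref{major 2} are over $Z_{H_0}(F)\backslash H_0(F)$ and $Z_H(F)\backslash H(F)$, i.e.\ over the subgroup modulo its center, whereas the weak Cartan decomposition $G_0(F)=H_0(F)A_0^+\CK_0$ of Proposition~\ref{cartan 1} parametrizes the homogeneous space $H_0(F)\backslash G_0(F)$. The torus $A_0$ appearing there is the maximal split center of a minimal Levi $M_0$ of $G_0$, not a maximal split torus of $H_0$; an element $h_0\in H_0(F)$ decomposes trivially as $h_0\cdot 1\cdot 1$, so this decomposition imposes no constraint and cannot reduce the $H_0$-integral to a chamber integral. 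The ``Jacobian coming from the quotient by $H_0$'' you invoke is the density of the invariant measure on $H_0\backslash G_0$, which has nothing to do with Haar measure on $Z_{H_0}\backslash H_0$. In this paper Proposition~\ref{cartan 1} feeds into estimates over $H\backslash G$ such as Proposition~\ref{major 4}, not into Lemma~\ref{major 2}.

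What the reductive part actually requires is the Cartan decomposition of $H_0(F)$ \emph{itself} as a reductive $p$-adic group, $H_0(F)=K_{H_0}A_{H_0}^+K_{H_0}$ with $A_{H_0}$ a maximal split torus of $H_0\simeq\GU(J_{2,\varepsilon})$. Its Jacobian grows like $\delta_{B_{H_0}}(a)$ on the positive chamber, so convergence hinges on an exponent comparison showing that $\Xi^{G_0}(a)\,\delta_{B_{H_0}}(a)\,e^{\epsilon\sigma(a)}$ has strict exponential decay on $A_{H_0}^+$; carrying that out for the twisted-diagonal embedding $\GU(J_{2,\varepsilon})\hookrightarrow\GL_2(E)\times\GU(J_{2,\varepsilon})$ is the substance of the argument the paper delegates to \cite{Wan17}. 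Your handling of the unipotent part --- writing $h=h_0u$ with $u\in U(F)$, $h_0\in H_0(F)$, integrating over $U(F)$ first via the $\delta_{\bar P}^{1/2}\Xi^{M_\varepsilon}$-type majorization, and in part~(2) using the polynomial decay of $(1+|\iota(u)|)^{-\delta}$ along the direction where the linear form $\iota|_U$ is nontrivial --- is on the right track, but it must be grafted onto the correct reductive estimate rather than onto the Cartan decomposition of $H\backslash G$, which is simply the wrong object here.
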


Let $C\subset G(F)$ be a compact subset with non-empty interior. Define the function
$\Xi^{H\backslash G}_{C}(x)=vol_{H\back G}(xC)^{-1/2}$ for $x\in H(F)\back G(F)$.
If $C'$ is another compact subset with non-empty interior, then
$\Xi^{H\backslash G}_{C}(x)\sim \Xi^{H\backslash G}_{C'}(x)$
for all $x\in H(F)\back G(F)$. We will only use the function $\Xi^{H\backslash G}_{C}$ for majorization. From now on, we will fix a particular $C$, and set
$\Xi^{H\backslash G}=\Xi^{H\backslash G}_{C}.$
The next proposition gives some basic properties for the function $\Xi^{H\backslash G}$.

\begin{prop}\label{major 4}
\begin{enumerate}
\item There exists $d>0$ such that the integral
$$\int_{H(F)\back G(F)}\hc(x)^2 \nor(x)^{-d} \ud x$$
is absolutely convergent. Here $\nor(x):=\inf_{h\in H(F)} \sigma(hx)$ for $x\in H(F)\back G(F)$.
\item For all $d>0$, there exists $d'>0$ such that
$$\int_{\zh}\Xi^G(hx)\sigma_0(hx)^{-d'}\ud h\ll \hc(x) \nor(x)^{-d}$$
for all $x\in H(F)\back G(F)$.
\end{enumerate}
\end{prop}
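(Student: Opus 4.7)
The plan is to follow the strategy of \cite[Sections~4.3 and 4.4]{Wan17} for the Ginzburg--Rallis model, which itself adapts the arguments of \cite{B15}. Three ingredients drive both parts: (i) the weak Cartan decomposition $G(F)=H(F)A_0^+\CK$ from Proposition~\ref{cartan 1}, which reduces $H\backslash G$-integrals to integrals over the positive chamber $A_0^+$; (ii) the decoupling estimates of Proposition~\ref{spherical}\eqref{item:prop-spherical-3}, which separate the $H$-variable from the $A_0^+$-variable; and (iii) the definition $\hc(x)^2=\vol_{H\backslash G}(xC)^{-1}$, which is the bridge between $\Xi^G$-integrals over $H$ and the spherical Xi-function.

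For part~(1), the weak Cartan decomposition combined with an explicit Jacobian computation transforms the integral into
\[
\int_{H(F)\back G(F)}\hc(x)^2\,\nor(x)^{-d}\ud x \;\ll\; \int_{A_0^+}\hc(a)^2\,\nor(a)^{-d}\,\delta_0(a)^{-1}\ud a,
\]
where $\delta_0$ is the Jacobian of the decomposition. A volume computation using the Iwasawa decomposition along a good parabolic $\bar P_{\min}$ shows that $\hc(a)^2\sim\delta_{\bar P_{\min}}(a)$ on $A_0^+$ up to polynomial corrections in $\sigma(a)$, and $\nor(a)\sim\sigma(a)$ on the same region by Proposition~\ref{spherical}\eqref{item:prop-spherical-3}. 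The resulting integrand is dominated by an exponentially decaying factor (coming from $\delta_{\bar P_{\min}}\delta_0^{-1}$) times $\sigma(a)^{-d}$ and a polynomial, so the integral converges for $d$ sufficiently large, exactly as in \cite[Sect.~4.3]{Wan17}.

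For part~(2), the strategy is to smear against a compact neighborhood $C_0$ of $1$ in $G(F)$ and then apply Cauchy--Schwarz. Using that both $\Xi^G$ and $\sigma_0$ are slowly varying under right translation by elements of $C_0$, one obtains
\[
\int_{\zh}\Xi^G(hx)\sigma_0(hx)^{-d'}\ud h \;\ll\; \vol(C_0)^{-1}\int_{\zh}\int_{C_0}\Xi^G(hxc_0)\sigma_0(hxc_0)^{-d'}\ud c_0\ud h.
\]
Descending the $c_0$-integral to a fundamental domain for $H(F)\backslash G(F)$ produces a factor of $\vol_{H\backslash G}(xC_0)^{1/2}=\hc(x)^{-1}$ after a Cauchy--Schwarz split, leaving an $H$-integral controlled by part~(1). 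Invoking the decoupling inequality $\sigma_0(h)+\nor(x)\ll\sigma_0(hx)$ --- obtained by taking infima over $H(F)$ in Proposition~\ref{spherical}\eqref{item:prop-spherical-3}(a) --- this remaining integral factors into $\sigma_0(h)^{-d_1}\nor(x)^{-d_2}$ with $d_1+d_2\leq d'$; choosing $d_2=d$ and $d_1$ large enough, Lemma~\ref{major 2}(1) guarantees convergence of the $h$-integral and yields the required $\hc(x)\nor(x)^{-d}$ bound.

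The main technical obstacle is the Cauchy--Schwarz bookkeeping in part~(2): one must balance the decay extracted as $\nor(x)^{-d}$ against the decay retained to keep the $H$-integral convergent, and this forces $d'$ to be taken large relative to $d$. Once this balance is arranged, every remaining step reduces to the volume and modulus computations on $A_0^+$ that are formally identical to those carried out in \cite[Sect.~4.4]{Wan17}.
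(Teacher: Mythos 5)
The paper's own "proof" is a single line—it defers entirely to Sections 4.3–4.4 of \cite{Wan17}—so your decision to reconstruct the argument from that source (and from \cite{B15}) is the right starting point, and your sketch of part (1) is essentially sound. One small quibble: after $\hc(a)^2\sim\delta_{\bar P_{\min}}(a)$ and the Jacobian estimate, the factors $\delta_{\bar P_{\min}}\delta_0^{-1}$ should \emph{cancel} up to a polynomial in $\sigma(a)$, not give exponential decay; that cancellation is precisely what forces you to take $d$ large (if you truly had a surviving exponential, $d=0$ would do). This is a presentation slip rather than a mathematical one, but as written it misdescribes the mechanism.

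Part (2), however, has genuine gaps. First, the order of operations matters: the decoupling estimate from Proposition~\ref{spherical}\eqref{item:prop-spherical-3}(a) is stated only for $a\in A_{\min}^+$, and it does \emph{not} extend to a bound $\sigma_0(h)+\nor(x)\ll\sigma_0(hx)$ for arbitrary $x\in G(F)$ by "taking infima over $H(F)$." If you write $x=h_0ak$ via the weak Cartan decomposition, the proposition gives $\sigma_0(hh_0)+\sigma_0(a)\ll\sigma_0(hh_0a)$, and there is no way to replace $\sigma_0(hh_0)$ by $\sigma_0(h)$ on the left, since $h_0$ is unbounded. The correct move is to first reduce to $x=a\in A_0^+\CK$ (which is harmless because all quantities in the inequality transform well under right translation by the compact set $\CK$), and only then decouple. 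Second, the Cauchy--Schwarz step as stated produces the wrong factor: you claim it yields $\vol_{H\backslash G}(xC_0)^{1/2}=\hc(x)^{-1}$, which is a \emph{growing} factor, yet the target bound has $\hc(x)$, a \emph{decaying} factor, on the right side—so these cannot cancel to give what you want, and the text never explains the sign reversal. Third, after decoupling you are left with $\int_{\zh}\Xi^G(ha)\sigma_0(h)^{-d_1}\ud h$ for $a\in A_0^+$, and the heart of the matter is showing this is $\ll\hc(a)$; Lemma~\ref{major 2}(1), which bounds $\int_{\zh}\Xi^G(h)\sigma_0(h)^{-d}\ud h$ with no $a$-dependence, simply does not address this. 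What actually produces the factor $\hc(a)\sim\delta_{\bar P_{\min}}(a)^{1/2}$ is an explicit computation via the Iwasawa decomposition with respect to the good parabolic $\bar P_{\min}$ (the same kind of argument used to prove Lemma~\ref{major 2} itself, but now tracking the $a$-dependence), not a Cauchy--Schwarz trick nor part (1). As written, the deduction of $\hc(x)\nor(x)^{-d}$ from the pieces you assemble does not close.
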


\begin{prop}\label{major 5}
Let $\bar{Q}=M_Q \bar{U}_Q$ be a good parabolic subgroup of $G$. Let $H_{\bar{Q}}=H\cap \bar{Q}$, and let $G_{\bar{Q}}=\bar{Q}/\bar{U}_{Q}$ be the reductive quotient of $\bar{Q}$. Then
\begin{enumerate}
\item $H_{\bar{Q}}\cap \bar{U}_{Q}=\{1\}$, hence we can view $H_{\bar{Q}}$ as a subgroup of $G_{\bar{Q}}$. We also have $\delta_{\bar{Q}}(h_{\bar{Q}})=\delta_{H_{\bar{Q}}} (h_{\bar{Q}})$ for all $h_{\bar{Q}}\in H_{\bar{Q}}(F)$.
\item \label{item:major-5-2} There exists $d>0$ such that the integral
$$\int_{Z_{H}(F)\back H_{\bar{Q}}(F)} \Xi^{G_{\bar{Q}}}(h_{\bar{Q}})\sigma_0(h_{\bar{Q}})^{-d} \delta_{H_{\bar{Q}}}(h_{\bar{Q}})^{1/2} \ud h_{\bar{Q}}$$
is absolutely convergent.
\end{enumerate}
\end{prop}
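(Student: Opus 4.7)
The plan is as follows. For part (1), the first claim follows immediately from Proposition \ref{spherical}: by \eqref{item:prop-spherical-2}, any good parabolic $\bar{Q}$ contains a good minimal parabolic $\bar{P}_{min} = M_{min}\bar{U}_{min}$, and then $\bar{U}_Q \subseteq \bar{U}_{min}$, so \eqref{item:prop-spherical-1} gives $H_{\bar{Q}} \cap \bar{U}_Q \subseteq H \cap \bar{U}_{min} = \{1\}$. Hence $H_{\bar{Q}}$ injects into $G_{\bar{Q}} = \bar{Q}/\bar{U}_Q$. The modular character equality $\delta_{\bar{Q}}\vert_{H_{\bar{Q}}} = \delta_{H_{\bar{Q}}}$ is a standard consequence of $H\bar{Q}$ being Zariski open in $G$: the natural map $\Lie(H)/\Lie(H_{\bar{Q}}) \lra \Lie(G)/\Lie(\bar{Q}) \simeq \Lie(U_Q)$ is an $H_{\bar{Q}}$-equivariant isomorphism (by dimension count combined with the injectivity $\Lie(H)/\Lie(H_{\bar{Q}}) \hookrightarrow \Lie(G)/\Lie(\bar{Q})$ coming from the first part), so taking determinants of $\Ad(h_{\bar{Q}})$ on both sides and using that both $G$ and $H$ are unimodular (unimodularity of $H = H_0 \ltimes U$ being checked by direct inspection of the action of $H_0 \simeq \GU(J_{2,\varepsilon})$ on $\Lie(U)$) yields the equality.

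For part (2), the strategy is to reduce the integral to one of the type appearing in Lemma \ref{major 2}(1). The main ingredients are: (i) part (1), which replaces $\delta_{H_{\bar{Q}}}^{1/2}$ by $\delta_{\bar{Q}}^{1/2}\vert_{H_{\bar{Q}}}$; and (ii) the standard Harish-Chandra comparison $\delta_{\bar{Q}}(m)^{1/2}\Xi^{G_{\bar{Q}}}(m) \ll \Xi^G(m)\sigma(m)^a$ for all $m \in M_Q(F)$ and some $a \geq 0$, which is a well-known feature of the $\Xi$-function under parabolic descent. Since $H_{\bar{Q}}$ injects into $M_Q(F)$ through the reductive quotient, combining (i) and (ii) yields
\[
\Xi^{G_{\bar{Q}}}(h_{\bar{Q}})\,\sigma_0(h_{\bar{Q}})^{-d}\,\delta_{H_{\bar{Q}}}(h_{\bar{Q}})^{1/2} \ll \Xi^G(h_{\bar{Q}})\,\sigma_0(h_{\bar{Q}})^{-(d-a)}.
\]
It then suffices to show that the right-hand side is integrable over $Z_H(F)\backslash H_{\bar{Q}}(F)$ for $d$ large, which is established by decomposing $H_{\bar{Q}} = H_{0,\bar{Q}} \ltimes U_{\bar{Q}}$ according to the inherited Levi decomposition of $H = H_0 \ltimes U$, applying Harish-Chandra's basic integrability estimate on the reductive factor $H_{0,\bar{Q}}$, and controlling the unipotent factor by the weight $\sigma_0^{-d}$, in direct analogy with the proof of Lemma \ref{major 2}(1).

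The main obstacle is handling the non-reductivity of $H_{\bar{Q}}$ in the argument for part (2): because $H_{\bar{Q}}$ contains the unipotent part $U \cap \bar{Q}$, it is not a reductive subgroup of $G_{\bar{Q}}$, and its own Haar measure is not literally the restriction of the Haar measure on $H$. The factor $\delta_{H_{\bar{Q}}}^{1/2}$ in the integrand is exactly what symmetrizes the situation, and the identification $\delta_{H_{\bar{Q}}} = \delta_{\bar{Q}}\vert_{H_{\bar{Q}}}$ from part (1) is what allows it to be absorbed into the Harish-Chandra $\Xi$-estimate on $G_{\bar{Q}}$. Tracking the precise constants and exponents $a$ and $d$ in these estimates is technical but follows exactly the pattern of Sections 4.3--4.4 of \cite{Wan17}, and we expect the argument to transfer with only notational changes to the present setting.
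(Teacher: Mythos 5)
The paper itself omits the proof of this proposition, citing the analogous estimates for the Ginzburg--Rallis model in Sections 4.3--4.4 of \cite{Wan17}, so there is no written argument to compare against line by line; your outline follows the expected strategy. Part (1) is clean: the intersection claim follows from Proposition \ref{spherical}\eqref{item:prop-spherical-1}--\eqref{item:prop-spherical-2} exactly as you say, and the modular-character identity is the standard unimodularity computation using the $H_{\bar{Q}}$-equivariant isomorphism $\Fh/\Fh_{\bar{Q}}\simeq\Fg/\bar{\Fq}$ forced by the openness of $H\bar{Q}$. For part (2) your plan is sound, but two points deserve to be made explicit. First, the parabolic-descent estimate $\delta_{\bar{Q}}^{1/2}\Xi^{M_Q}\ll\Xi^G\,\sigma^a$ applies to the Levi projection $m(h_{\bar{Q}})\in M_Q(F)$, not to $h_{\bar{Q}}\in\bar{Q}(F)$ itself; of course $\Xi^{G_{\bar{Q}}}$ and $\delta_{\bar{Q}}$ only see this projection, but your displayed bound should read $\Xi^G(m(h_{\bar{Q}}))\sigma_0^{-(d-a)}$ rather than $\Xi^G(h_{\bar{Q}})\sigma_0^{-(d-a)}$. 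Second, the resulting convergence over $Z_H(F)\backslash H_{\bar{Q}}(F)$ is \emph{not} a literal consequence of Lemma \ref{major 2}(1), which integrates over $Z_H(F)\backslash H(F)$ with a different Haar measure; it is a fresh estimate established by the same technique (Cartan decomposition for the reduced spherical pair $(G_{\bar{Q}},H_{\bar{Q}})$ plus Harish-Chandra integrability on $H_{0,\bar{Q}}$), which is evidently what you intend by ``in direct analogy with the proof of Lemma~\ref{major 2}(1)'' but is worth phrasing carefully, since ``reduce to Lemma~\ref{major 2}(1)'' overstates the relationship. With these caveats spelled out, the argument is correct and is the same in spirit as the GGP and GR model computations the paper points to.
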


\section{The trace formula and the multiplicity formula} \label{sec:trace-formula}
\subsection{The distribution $I_{geom}(f)$}
As in the previous section, we will use $(G,H)$ to denote the pair $(G_{\varepsilon},H_{\varepsilon})$ and $\omega\otimes \xi$ to denote the character $\omega_{\varepsilon}\otimes \xi_{\varepsilon}$. Given $f\in \CC_{scusp}(\zg,\eta^{-1})$, we have associated the quasi-character $\theta_f$ on $G(F)$ in Section \ref{sec:preliminary}. Let $\CT_{ell}(H_0)$ be a set of representatives of maximal elliptic tori in $H_0$. In both cases (i.e. the quasi-split case and the non quasi-split case), there is a natural bijection between $\CT_{ell}(H_0)$ and the set of all the quadratic extensions of $F$. Also it is easy to check that for all $T\in \CT_{ell}(H_0)$ and $t\in T(F)_{reg}$, the Lie algebra of $G_t$ has a unique regular nilpotent orbit which will be denoted by $\CO_t$. We then define
$$c_f(t):=c_{\theta_f}(t)=c_{\theta_f,\CO_t}(t).$$

\begin{prop}\label{convergence of geometric expansion}
With the notation above, the integral
$$\int_{T(F)/Z_G(F)} D^H(t)^{1/2} c_f(t) \omega(t)\ud t$$
is absolutely convergent.
\end{prop}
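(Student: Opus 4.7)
The plan is to combine compactness of the integration domain with the standard local boundedness of germs of quasi-characters.

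First I would verify that $T(F)/Z_G(F)$ is compact. Since $T$ is an elliptic maximal torus of $H_0 \simeq \GU(J_{2,\varepsilon})$, the quotient $T/Z_{H_0}$ is a one-dimensional $F$-anisotropic torus; combined with the identification $Z_{H_0}(F) = Z_G(F) \simeq E^{\times}$ this shows $T(F)/Z_G(F)$ is compact. Convergence therefore reduces to controlling the integrand in a neighbourhood of the singular set of $T$, where $D^H(t)$ vanishes and the germ $c_f(t)$ may blow up.

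Second, since $f \in \CC_{scusp}(\zg,\eta^{-1})$ the distribution $\theta_f$ is a quasi-character on $G(F)$, so I would invoke the local boundedness theorem for germs of quasi-characters (Waldspurger \cite{W10}, or Proposition~4.5.1 of \cite{B15}): for any quasi-character $\theta$, the function $t \mapsto D^G(t)^{1/2}\, c_\theta(t)$ is locally bounded on $G_{ss}(F)$. Combined with compactness, this yields a uniform bound $D^G(t)^{1/2}|c_f(t)| \leq C$ on $T(F)/Z_G(F)$. Since $|\omega(t)| = 1$, the integrand satisfies
\[
|D^H(t)^{1/2}\, c_f(t)\, \omega(t)| \leq C \cdot \bigl(D^H(t)/D^G(t)\bigr)^{1/2}
\]
on $T(F)_{reg}$, and the convergence is reduced to the local $L^1$-integrability of $(D^H/D^G)^{1/2}$ on $T(F)/Z_G(F)$.

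Finally I would carry out a local analysis of this ratio near each singular point $t_0 \in T(F)$. Transferring to a neighbourhood of $0$ in $\Ft(F)$ via the exponential map, the ratio is, up to bounded factors, a product $\prod_{\alpha} |\alpha(X)|^{-1}$ indexed by the roots of $(G,T)$ not coming from $(H_0,T)$ that vanish at $t_0$. A direct count using the root system of $\GU_6$ and the description of an elliptic torus of $\GU_2$ shows that the total order of singularity at any such $t_0$ is strictly less than the $F$-dimension of $T/Z_G$, which is one, giving local integrability and hence absolute convergence of the integral. The main technical point is this last combinatorial estimate; it is parallel to the computations in the Ginzburg-Rallis case \cite{Wan15} and the orthogonal GGP case \cite{B15}, and I would expect the argument to carry over with only minor modifications once the embedding of $T$ into $\GU_6$ is written out explicitly.
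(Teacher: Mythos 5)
Your proposal follows the same Waldspurger-style strategy that the paper itself points to: compactness of $T(F)/Z_G(F)$, the local boundedness of $D^G(\cdot)^{1/2}c_{\theta_f}(\cdot)$ for quasi-characters, and a local integrability estimate for the Weyl-discriminant ratio near singular points of $T$. The paper's own ``proof'' is a one-line reference to Proposition 5.2 of \cite{Wan15}, which proceeds along exactly these lines, so the two accounts agree in approach and are at a comparable level of completeness (you, like the paper, defer the explicit root count to the analogous GR computation).
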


\begin{proof}
The proof is similar to the GR case in Proposition 5.2 of \cite{Wan15}. We will skip it here.
\end{proof}

\begin{defn}\label{definition of geometric side}
We define the geometric side of the trace formula to be
\begin{align*}
I_{geom}(f):=c_{f}(1)+\sum_{T\in \CT_{ell}(H_0)} & |W(H_0,T)|^{-1} \vol(T(F)/Z_G(F))^{-1}\\
 &\times\int_{T(F)/Z_G(F)} D^H(t)^{1/2} c_f(t) \omega(t) \ud t
\end{align*}
where $c_f(1)=c_{\theta_f}(1)$ is the regular germ of $\theta_f$ at $1$. Note that if the group is not quasi-split, $c_f(1)$ is always equal to $0$.
\end{defn}

\subsection{The distribution $I(f)$ and $I_{spec}(f)$}
For $f\in \CC_{scusp} (Z_G(F)\backslash G(F), \eta^{-1})$, define the function $I(f,\cdot)$ on $H(F)\back G(F)$ to be
$$I(f,g)=\int_{\zh} f(g^{-1}hg) \omega\otimes\xi(h) \ud h.$$
By Lemma \ref{major 2}, the above integral is absolutely convergent. Then by the same argument as in the GGP case (Proposition 7.1.1 of \cite{B15}) and the GR case (Appendix B of \cite{Wan17}), we can show that the integral
\begin{equation}\label{I(f)}
I(f):=\int_{H(F)\back G(F)} I(f,g) \ud g
\end{equation}
is absolutely convergent for all $f\in \CC_{scusp}(\zg,\eta^{-1})$, and it defines a continuous linear form
$$\CC_{scusp}(\zg,\eta^{-1})\rightarrow \BC \colon f\rightarrow I(f).$$
$I(f)$ will be the distribution in our trace formula.

\begin{rmk}
As in the GGP case and the GR case, although the integral \eqref{I(f)} defining $I(f)$ is absolutely convergent, the double integral
$$\int_{H(F)\back G(F)}\int_{\zh} f(g^{-1}hg) \omega\otimes\xi(h) \ud h \ud g$$
is not absolutely convergent. As a result, in the proof of the geometric side of the trace formula, we need to introduce truncated functions on $H(F)\back G(F)$.
\end{rmk}

We then define the spectral side of the trace formula to be
\begin{equation}\label{I_{spec}(f)}
I_{spec}(f)=\int_{\CX(G,\eta)} D(\pi)\theta_f(\pi)m(\bar{\pi})\ud\pi.
\end{equation}
We refer the readers to Section 2.7 of \cite{B15} for the definitions of $D(\pi)$ and the measure ${\rm d}\pi$. Now we are ready to state the trace formula.

\begin{thm}\label{trace formula}
For all $f\in \CC_{scusp} (Z_G(F)\backslash G(F),\eta^{-1})$, we have
$$I_{spec}(f)=I(f)=I_{geom}(f).$$
\end{thm}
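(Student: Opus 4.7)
The strategy is to prove the two equalities $I(f) = I_{geom}(f)$ and $I(f) = I_{spec}(f)$ separately. For the second, I would use the decomposition
$$\CC_{scusp}(\zg, \eta^{-1}) = {}^{\circ}\CC(\zg, \eta^{-1}) \oplus \CC_{ind, scusp}(\zg, \eta^{-1})$$
introduced in Section \ref{sec:preliminary} and write $f = f_1 + f_2$ accordingly; since both $I(\cdot)$ and $I_{spec}(\cdot)$ are linear, it suffices to treat each summand separately. The overall architecture follows Waldspurger's approach to the orthogonal GGP conjecture in \cite{W10} and \cite{W12}, as adapted by Beuzart-Plessis \cite{B15}, \cite{B17} and by the first-named author \cite{Wan15}, \cite{Wan16}.

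For the geometric side $I(f) = I_{geom}(f)$, the double integral formally defining $I(f)$ is not absolutely convergent, so I would introduce a truncation function $\kappa_N$ on $H(F) \backslash G(F)$ built from the weak Cartan decomposition of Proposition \ref{cartan 1}, and study the truncated integral
$$I_N(f) = \int_{H(F) \backslash G(F)} \kappa_N(g)\, I(f, g)\, dg.$$
Since $I(f)$ itself is convergent by \eqref{I(f)}, one has $\lim_{N \to \infty} I_N(f) = I(f)$. The task is then to compute the same limit in a different way: swap orders of integration, use the strong cuspidality of $f$ to discard unipotent contributions along proper parabolic subgroups, and localize near semisimple elements $t$ in elliptic tori $T \in \CT_{ell}(H_0)$ as well as near the identity. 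The germ expansion \eqref{germ 2} of the quasi-character $\theta_f$ then extracts the coefficients $c_f(t)$ and $c_f(1)$, matching Definition \ref{definition of geometric side}. Convergence at each stage follows from Lemma \ref{major 2} and Propositions \ref{major 4}--\ref{major 5}. Since the computations mirror those in \cite{Wan15} almost verbatim, I would give only a sketch in Section \ref{sec:proof-geometric}.

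For the spectral side $I(f) = I_{spec}(f)$, the argument splits according to $f = f_1 + f_2$. When $f = f_1 \in {}^{\circ}\CC(\zg, \eta^{-1})$, I would follow Beuzart-Plessis's method from \cite{B17}: write $f_1$ as a combination of matrix coefficients of discrete series $\pi$ with central character $\eta^{-1}$, and for each such $\pi$ identify the corresponding contribution to $I(f_1)$ with $D(\pi)\theta_{f_1}(\pi)m(\bar{\pi})$ by manipulating matrix coefficients directly. This framework is designed precisely to bypass the need for a Gelfand-pair bound on $G(F)$. When $f = f_2 \in \CC_{ind, scusp}(\zg, \eta^{-1})$, I would invoke Proposition \ref{Gelfand pair}, which asserts $m(\pi) \leq 1$ for every tempered $\pi$ that is not a discrete series; with this bound in hand, the GGP-style argument of \cite{B15} applies, namely expand $f_2$ via the metrical Paley-Wiener theorem, interchange the spectral and geometric integrals (justified by Propositions \ref{major 4} and \ref{major 5} together with Lemma \ref{major 2}), and identify the result with the induced part of $I_{spec}(f_2)$.

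The main obstacle is the spectral side for the discrete piece, since no a priori multiplicity-one bound is available on all of $G(F)$; importing Beuzart-Plessis's matrix-coefficient framework from \cite{B17} is essential. The second technical hurdle is Proposition \ref{Gelfand pair} itself, whose proof I would carry out in Appendix \ref{sec:appendix} via the orbit method: one analyzes the slice representation of $H_\varepsilon$ on the normal bundle to $H_\varepsilon \backslash G_\varepsilon$ at semisimple points and shows that, away from the discrete series, no obstruction to multiplicity one arises.
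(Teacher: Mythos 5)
Your overall architecture matches the paper's: the spectral side is proved separately on the two summands of $\CC_{scusp}(\zg,\eta^{-1})={}^{\circ}\CC(\zg,\eta^{-1})\oplus\CC_{ind,scusp}(\zg,\eta^{-1})$, the discrete piece via Beuzart-Plessis's matrix-coefficient argument from \cite{B17} (setting up the bilinear form $\CB_\pi$, showing it is a perfect pairing between coinvariants, and computing $I(f)=\tr(\pi(f))m(\pi)$ directly), and the induced piece via the GGP-style expansion of \cite{B15} with the multiplicity-one input on non-discrete-series coming from the appendix. The geometric side is likewise proved by truncation and localization, essentially as in \cite{Wan15}.

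A few points to tighten. First, in the paper the geometric and spectral sides are not logically independent: the proof of $I(f)=I_{geom}(f)$ for Schwartz $f$ is deduced, by a density argument, from the compactly supported case \emph{after} the spectral expansion is established, so the order of argument matters (see the opening of Section \ref{sec:proof-geometric}). Your sketch presents them as parallel goals. Second, the heart of the geometric computation is carried out at the Lie algebra level: one studies $I(\phi)$ for $\phi\in C_{c,scusp}^\infty(\Fg'(F))$, expresses it via the Fourier transform $\hat\theta_\phi$, analyzes the slice representation of $H(F)/Z_G(F)$ on $\Xi+\Fh^\perp(F)$, and — the genuinely technical step you omit — converts the truncation $\kappa_N$ into Arthur's weight factor. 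Third, you attribute the bound $m(\pi)\leq 1$ for non-discrete tempered $\pi$ to Proposition \ref{Gelfand pair}, but that proposition only asserts $m(I_{\bar Q}^G(\tau))\leq m(\tau)$; the bound you want is Corollary \ref{Gelfand pair cor}, which also uses Theorem \ref{main theorem for reduced models}(\ref{thm:main-thm-item-2}), itself established by the inductive hypothesis on reduced models. Finally, the ``orbit method'' of Appendix \ref{sec:appendix} is not a coadjoint-orbit/normal-bundle analysis as you describe; it is the Bernstein–Zelevinsky filtration by $\bar Q(F)\backslash G(F)/H(F)$-orbits of the induced representation, and one shows that the non-open orbits carry no invariant functional. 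These are all imprecisions in the sketch rather than gaps in the strategy — the underlying plan is the paper's own.
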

The spectral expansion will be proved in Section \ref{sec:proof-spectral}, while the geometric expansion will be proved in the next subsection.

To end this subsection, we define the Lie algebra analogy of the distribution $I(f)$ in the trace formula. This will be used in the proof of the geometric expansion.
Denote $\Fg'(F)$ (resp. $\Fh'(F)$) to be the subspace of $\Fg(F)$ (resp. $\Fh(F)$) consisting of elements of trace zero.
Then $\Fg(F)=\Fg'(F)\oplus \Fz_{\Fg}(F)$ and $\Fh(F)=\Fh'(F)\oplus \Fz_{\Fg}(F)$. For $\phi\in C_{c,scusp}^{\infty} (\Fg'(F))$, we define
$$I(\phi,g)=\int_{\Fh'(F)} \phi(g^{-1}Xg) \ud X
\text{ and }
I(\phi)=\int_{H(F)\back G(F)} I(\phi,g)\ud g.$$
As in the group case, the integral defining $I(\phi)$ is absolutely convergent.

\subsection{The proof of the geometric expansion} \label{sec:proof-geometric}
In this subsection, we prove the geometric side of the trace formula. The idea of the proof is the same as the GGP case in \cite{W10} and \cite{B15}, while all the computations are very similar to the GR case in \cite{Wan15}. As a result, we will only give a sketch of the proof without providing details.

First by the standard argument as in the GGP case (Section 11.3 of \cite{B15}), once we have proved the spectral side of the trace formula (this will be done in Section \ref{sec:proof-spectral}), we only need to prove the geometric side for compactly supported functions (i.e. for $f\in C_{c,scusp}^{\infty}(\zg, \eta^{-1})$). Then by the same argument as in the GR case (Proposition 5.6 of \cite{Wan15}), it is enough to consider the case when the characters $\chi_E,\chi_F$ and $\eta$ are trivial.

The next step is to study the distribution $I(\phi)$ for the Lie algebra case (i.e. for $\phi\in C_{c,scusp}^{\infty} (\Fg'(F))$). The goal is to express $I(\phi)$ in terms of $\theta_{\hat{\phi}}=\hat{\theta}_\phi$ where $\hat{\phi}$ is the Fourier transform of $\phi$. In order to do this, we first need to introduce a sequence of truncated functions $\kappa_N\in C_{c}^{\infty}(H(F)\back G(F))$ (where $N\geq 1$) whose definition is similar to the GR case. For $N\geq 1$, we define
$$I_N(\phi)=\int_{H(F)\back G(F)} \kappa_N(g) I(\phi,g)\ud g.$$
We have $I(\phi)=\lim_{N\rightarrow \infty}I_N(\phi).$
Hence, it is enough to consider $I_N(\phi)$.

Then we study the slice representation which is the conjugation action of $H(F)/Z_G(F)$ on the space $\Xi+\Fh^{\perp}(F)$. Here $\Fh^{\perp}$ is the orthogonal complement of $\Fh$ in $\Fg$ and $\Xi=\left(\begin{smallmatrix}0&0&0\\I_2&0&0\\0&-w_2 A_{\varepsilon}&0\end{smallmatrix}\right)$ is an element in $\bar{\Fu}(F)$ associated to the character $\xi$ of $U(F)$. By a very similar computation as in the GR case (Section 8 of \cite{Wan15}), we can show that over a Zariski open subset, this action is free and the orbits are the same as the $G(F)$-conjugacy classes in $\Fg(F)$ (i.e. two elements in $\Xi+\Fh^{\perp}(F)$ are conjugated to each other under $H(F)$ if and only if they are conjugated to each other under $G(F)$). Note that this will fail for the unitary group case (i.e. the model $(G_{1,\varepsilon},H_{1,\varepsilon})$), and it is one of the reasons why we cannot prove the trace formula for the unitary group case.

After studying the slice representation, by changing $\phi$ to its Fourier transform $\hat{\phi}$, we can rewrite $I_{N}(\phi)$ as a weighted orbital integral of $\hat{\phi}$ whose weight is given by the truncated function $\kappa_N$. Then we change the truncated function (which is given by $\kappa_N$) in the weighted orbital integral to the standard weight factor defined by Arthur. This requires a long and technical argument. But due to the similarity between the model $(G,H)$ and the GR model, this argument will be similar to the GR model case in Section 9 of \cite{Wan15}. After all the arguments above, we can show that
$$I(\phi)=\lim_{N\rightarrow \infty}I_N(\phi)=\sum_{T\in \CT(G)} |W(G,T)|^{-1} \int_{\Ft_0'(F)}D^G(t)^{1/2} \hat{\theta}_{\phi}(t)\ud t$$
where $\CT(G)$ is a set of representatives of maximal tori in $G$, $\Ft'(F)=\Ft(F)\cap \Fg'(F)$, and $\Ft_0'(F)$ is the image of the slice representation which will be an open set of $\Ft'(F)$.

Finally, we just need to apply the standard argument as in the GGP case (Sections 11.4--11.7 of \cite{B15}) to finish the proof of the geometric side of the trace formula.

\subsection{The multiplicity formula}
Let $\pi$ be a smooth admissible (not necessarily irreducible) tempered representation of $G(F)$ with central character $\eta$.
Define the geometric multiplicity $m_{geom}(\pi)$ to be
\begin{align*}
m_{geom}(\pi):=c_{\pi}(1)+\sum_{T\in \CT_{ell}(H_0)}&  |W(H_0,T)|^{-1} \vol(T(F)/Z_G(F))^{-1} \\
&\times\int_{T(F)/Z_G(F)} D^H(t)c_{\pi}(t) \omega^{-1}(t)\ud t.
\end{align*}
Here $c_{\pi}(t)=c_{\theta_{\pi}}(t)$ is the regular germ of $\theta_{\pi}$ at $t$. Note that the expression of $m_{geom}(\pi)$ is almost the same as the geometric expansion $I_{geom}(f)$. The only difference is that we replace the quasi-character $\theta_f$ by $\theta_{\pi}$. The multiplicity formula is just
$$m(\pi)=m_{geom}(\pi).$$
In the next section, we will prove the multiplicity formula by assuming the trace formula holds. Apparently it is enough to prove it for irreducible tempered representations.

\subsection{The reduced models}
In this subsection, we will discuss the reduced models of the pair $(G,H)$. With the notation as in Section \ref{sec:model}, the reduced models are just the models $(G_{\bar{Q}},H_{\bar{Q}})$ where $\bar{Q}=M_Q\bar{U}_Q$ runs over the good parabolic subgroups of $G$. This models will be used in the proof of the spectral side of the trace formula.

We first define the multiplicities for the reduced models. Let $\tau$ be a smooth admissible representation of $G_{\bar{Q}}(F)$ whose central character equals $\eta$ on $Z_G(F)$.
Define the multiplicity $m(\tau)$ to be
$$m(\tau):=\dim(Hom_{H_{\bar{Q}}(F)}(\tau, (\omega\otimes \xi)|_{H_{\bar{Q}}(F)} \otimes \delta_{H_{\bar{Q}}}^{1/2})).$$
Note that as in Proposition \ref{major 5}, when we consider the reduced models, we need to twist the extra modular character $\delta_{H_{\bar{Q}}}^{1/2}$. For simplicity, we will use $\omega_Q\otimes \xi_Q$ (or just $\omega_Q$ if $\xi_Q$ is trivial) to denote the character $(\omega\otimes \xi)|_{H_{\bar{Q}}(F)} \otimes \delta_{H_{\bar{Q}}}^{1/2}$.

For our application, we need to divide the reduced models into two categories. We say the model $(G_{\bar{Q}},H_{\bar{Q}})$ is of  {\it Type I} \label{pg:type-I} if it appears both in the quasi-split case and the non quasi-split case (this is the same situation as in the GR model case, see Appendix B of \cite{Wan15} for details). This is equivalent to say that the Levi subgroup $M_Q(F)$ is isomorphic to $\GL_1(E)\times \GU(J_{4,\varepsilon})(F),\; \GL_2(E)\times \GU(J_{2,\varepsilon})(F)$ or $\GL_1(E)\times \GL_1(E)\times \GU(J_{2,\varepsilon})(F)$. All the rest reduced models are called {\it Type II} models. In particular, Type II models only appear in the quasi-split case.

For the rest part of this section, we will describe the reduce models and the multiplicity formulas associated to them. \textbf{We first consider the Type I reduced models.} If $M_Q(F)$ is isomorphic to $\GL_2(E)\times \GU(J_{2,\varepsilon})(F)$, the reduced model $(G_{\bar{Q}},H_{\bar{Q}})$ is just $(G_0, H_0)$. And the character $\omega_Q$ on $H_{\bar{Q}}$ is just the character $\omega$. In this case, the multiplicity formula is very similar to the $(G,H)$ case. To be specific, given a smooth admissible tempered representation $\tau$ of $G_0(F)$ whose central character equals $\eta$ on $Z_{H_0}(F)$, we define
\begin{align*}
m_{geom}(\tau):=c_{\tau}(1)+\sum_{T\in \CT_{ell}(H_0)} & |W(H_0,T)|^{-1} \vol(T(F)/Z_G(F))^{-1}\\
&\times \int_{T(F)/Z_G(F)} D^{H_0}(t) \theta_{\tau}(t) \omega^{-1}(t) \ud t.
\end{align*}
Then the multiplicity formula is just $m(\tau)=m_{geom}(\tau)$. The two models we get here are the only pure inner forms of each other.

If $M_Q(F)$ is isomorphic to $\GL_1(E)\times \GU(J_{4,\varepsilon})(F)$, the reduced model can be described as follows: $G_{\bar{Q}}(F)=M_{Q}(F)$, $H_{\bar{Q}}=H_{0,\bar{Q}}\ltimes U_{\bar{Q}}$ with
$$H_{0,\bar{Q}}(F)=\{h_Q(a,b)=\begin{pmatrix}a\end{pmatrix}\times diag(b,a,b,b) \colon a,b\in E^{\times},\; N_{E/F}(a)=N_{E/F}(b)\},$$
$$U_{\bar{Q}}(F)=\{u_Q(x,y,z)=\begin{pmatrix}1\end{pmatrix}\times \begin{pmatrix} 1&x&y&z\\0&1&0&\varepsilon^{-1}\bar{x} \\0&0&1&-\bar{y}\\ 0&0&0&1\end{pmatrix}\colon x,y,z\in E,\; z+\bar{z}-\varepsilon^{-1}x\bar{x}+y\bar{y}=0\}.$$
The character $\omega_Q\otimes \xi_Q$ on $H_{\bar{Q}}(F)$ is given by
$$\omega_Q\otimes \xi_Q(h_Q(a,b)u_Q(x,y))=\chi_1(a)\chi_2(b)\psi(y+\bar{y})$$
where $\chi_1$ and $\chi_2$ are some unitary characters of $E^{\times}$ with $\eta=\chi_1\chi_2$. We define the geometric multiplicity to be
$$m_{geom}(\tau):=c_{\tau}(1)+\vol(H_{0,\bar{Q}}(F)/Z_{G_{\bar{Q}}}(F))^{-1}
\int_{Z_{G_{\bar{Q}}}(F)\back H_{0,\bar{Q}}(F)} D^{H_{\bar{Q}}}(t) c_{\tau}(t) \omega_Q(t) \ud t$$
where $\tau$ is any smooth admissible tempered representation of $G_{\bar{Q}}(F)$ whose central character equals $\eta$ on $Z_G(F)$. The two models we get here are the only pure inner forms of each other.

If $M_Q(F)$ is isomorphic to $\GL_1(E)\times \GL_1(E)\times\GU(J_{2,\varepsilon})(F)$, the reduced model can be described as follows: $G_{\bar{Q}}(F)=M_{Q}(F)$, and
$$H_{\bar{Q}}(F)=H_{0,\bar{Q}}(F)=\{h_Q(a,b)=\begin{pmatrix}a\end{pmatrix}\times\begin{pmatrix}b\end{pmatrix}\times diag(a,b) \colon a,b\in E^{\times},\; N_{E/F}(a)=N_{E/F}(b)\}.$$
The character $\omega_Q$ on $H_{\bar{Q}}$ is given by
$$\omega_Q(h_Q(a,b))=\chi_1(a)\chi_2(b)$$
where $\chi_1$ and $\chi_2$ are some unitary characters of $E^{\times}$ with $\eta=\chi_1\chi_2$. We define the geometric multiplicity to be
$$m_{geom}(\tau):=c_{\tau}(1)+\vol(H_{0,\bar{Q}}(F)/Z_{G_{\bar{Q}}}(F))^{-1}\int_{Z_{G_{\bar{Q}}}(F)\back H_{0,\bar{Q}}(F)} D^{H_{\bar{Q}}}(t) c_{\tau}(t) \omega_Q(t) \ud t$$
where $\tau$ is any smooth admissible tempered representation of $G_{\bar{Q}}(F)$ whose central character equals $\eta$ on $Z_G(F)$. The two models we get here are the only pure inner forms of each other.

\begin{rmk}
By the description above, it is easy to see that the extra modular character $\delta_{H_{\bar{Q}}}^{1/2}$ is trivial for all Type I reduced models.
\end{rmk}

\textbf{Then we consider the Type II reduced models.} There are three Type II reduced models (all in the quasi-split case) which correspond to the cases when $M_Q(F)$ is isomorphic to $\GL_3(E)\times \GL_1(F), \GL_2(E)\times \GL_1(E)\times \GL_1(F)$ and $\GL_1(E)\times \GL_1(E)\times \GL_1(E)\times \GL_1(F)$. When $M_Q(F)=\GL_3(E)\times \GL_1(F)$, up to modulo the $\GL_1(F)$-part which is abelian, the reduced model can be described as follows: $G_{\bar{Q}}(F)=\GL_3(E)$ and $H_{\bar{Q}}(F)=H_{0,\bar{Q}}(F)\ltimes U_{\bar{Q}}(F)$ where
\begin{eqnarray*}
H_{0,\bar{Q}}(F)&=& \{\begin{pmatrix}a&0&0\\c&b&0\\0&0&a \end{pmatrix}\colon a,b\in E^{\times}, c\in E,\frac{a}{b}\in F^{\times}, \frac{c}{a}\in \sqrt{\alpha}F\}, \\
U_{\bar{Q}}(F)&=& \{\begin{pmatrix}1&0&x_1\\0&1&x_2\\0&0&1 \end{pmatrix}\colon  x_1,\ x_2\in E\}.
\end{eqnarray*}
And the character $\omega_Q\otimes \xi_Q$ is given by
$$\omega_Q\otimes \xi_Q(\begin{pmatrix}a&0&0\\c&b&0\\0&0&a \end{pmatrix} \begin{pmatrix}1&0&x_1\\0&1&x_2\\0&0&1 \end{pmatrix})=|\frac{a}{b}|^{-3/2}\chi_1(a)\chi_2(b)\psi(x_1)$$
where $\chi_1$ and $\chi_2$ are some unitary characters of $E^{\times}$. Here the factor $|\frac{a}{b}|^{-3/2}$ comes from the extra modular character $\delta_{H_{\bar{Q}}}^{1/2}$.

When $M_{Q}=\GL_2(E)\times \GL_1(E)\times \GL_1(F)$, up to modulo the $\GL_1(F)$ and $\GL_1(E)$ parts which are abelian, the reduced model can be described as follows: $G_{\bar{Q}}(F)=\GL_2(E)$, and
$$H_{\bar{Q}}(F)=H_{0,\bar{Q}}(F)=\{\begin{pmatrix} a&0\\c&b\end{pmatrix}\colon a,b\in E^{\times}, c\in E,\frac{a}{b}\in F^{\times}, \frac{c}{a}\in \sqrt{\alpha}F\}.$$
And the character $\omega_Q$ is given by
$$\omega_Q(\begin{pmatrix} a&0\\c&b\end{pmatrix})=|\frac{a}{b}|^{-1/2}\chi_1(a)\chi_2(b)$$
where $\chi_1$ and $\chi_2$ are some unitary characters of $E^{\times}$, and the factor $|\frac{a}{b}|^{-1/2}$ comes from the extra modular character $\delta_{H_{\bar{Q}}}^{1/2}$.

When $M_Q=\GL_1(E)\times \GL_1(E)\times \GL_1(E)\times \GL_1(F)$, the model is abelian and the multiplicity is trivially equal to $1$ for all irreducible representations (which are just characters).

For all the Type II reduced models, the geometric multiplicity is defined to be
$$m_{geom}(\tau)=c_{\tau}(1).$$
Since $G_{\bar{Q}}$ is a product of the general linear groups for all Type II reduced models, all these models don't have any other pure inner form and each $L$-packet only contains one element.

\begin{rmk}
The most important feature of the Type II reduced models is that there is no elliptic element in $H_{\bar{Q}}(F)$ other than the center. As a result, in the multiplicity formulas for these models, we only have the germ at the identity element. This is an analogue of the Type II reduced models for the GR model (see Appendix B of \cite{Wan15}).
\end{rmk}

The proof of the multiplicity formulas for the reduced models follows from the same, but easier arguments as the proof of the multiplicity formula for the model $(G,H)$. \textbf{Hence by induction, we will assume the multiplicity formulas for all the reduced models hold for the rest part of this paper.}

\subsection{Some consequences of the multiplicity formulas for the reduced models} \label{sec:consequence}
In this subsection, we discuss some consequences of the multiplicity formulas for the reduced models. Let $(G_{\bar{Q}},H_{\bar{Q}})$ be a reduced model and let $\Pi$ be a tempered local Vogan $L$-packet of $G_{\bar{Q}}(F)$ whose central character equals $\eta$ on $Z_G(F)$. If the reduced model is of Type II, $G_{\bar{Q}}$ is the product of some general linear group. Hence there is no other pure inner form of the model $(G_{\bar{Q}},H_{\bar{Q}})$ and the $L$-packet $\Pi$ only contains one element $\tau$. If the reduced model is of Type I, then there is another pure inner form of the model $(G_{\bar{Q}},H_{\bar{Q}})$ (as we described in the previous subsection) and the $L$-packet $\Pi$ contains representations of both groups and may have more than one element.

\begin{thm}\label{main theorem for reduced models}
\begin{enumerate}
\item \label{item:main-thm-1} $\displaystyle\sum_{\tau\in \Pi} m(\tau)=1$.
\item \label{thm:main-thm-item-2} $m(\tau)\leq 1$ for all irreducible tempered representation $\tau$ of $G_{\bar{Q}}(F)$ whose central character equals $\eta$ on $Z_G(F)$.
\end{enumerate}
\end{thm}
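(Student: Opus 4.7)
I first observe that Item (\ref{thm:main-thm-item-2}) is a formal consequence of Item (\ref{item:main-thm-1}): since every multiplicity $m(\tau)=\dim\Hom_{H_{\bar Q}(F)}(\tau,\omega_Q\otimes\xi_Q)$ is a non-negative integer, the identity $\sum_{\tau\in\Pi}m(\tau)=1$ forces each summand to lie in $\{0,1\}$. Thus the task reduces entirely to Item (\ref{item:main-thm-1}), and for this I will invoke the multiplicity formulas for the reduced models, which are in force by the inductive hypothesis at the end of Section~\ref{sec:trace-formula}. Hence $m(\tau)=m_{geom}(\tau)$ for every irreducible tempered $\tau$ with central character matching $\eta$ on $Z_G(F)$, and it suffices to compute $\sum_{\tau\in\Pi}m_{geom}(\tau)$.

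For Type II models the argument is immediate: the packet $\Pi=\{\tau\}$ is a singleton (since $G_{\bar Q}$ is, up to an abelian factor acting through a character, a product of general linear groups over $E$, which has no nontrivial pure inner forms), and $m_{geom}(\tau)=c_\tau(1)$. Any irreducible tempered representation of a product of general linear groups is generic, and by Rodier's formula $c_\tau(1)$ equals the dimension of the Whittaker model of $\tau$, hence equals $1$. Therefore $m(\tau)=1$.

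For Type I models the formula reads
\[
m_{geom}(\tau)=c_\tau(1)+\sum_{T\in\CT_{ell}(H_{0,\bar Q})}(\text{const})_T\int_{T(F)/Z_G(F)}D^{H_{\bar Q}}(t)c_\tau(t)\omega_Q^{-1}(t)\ud t,
\]
where the outer sum collapses to the single integral over $H_{0,\bar Q}(F)/Z_G(F)$ in the two subcases where $H_{0,\bar Q}$ is itself a torus. I would sum over $\tau\in\Pi=\Pi_\phi(G_{\bar Q}^{qs})\cup\Pi_\phi(G_{\bar Q}^{nqs})$ and treat the two halves separately. The contribution of $c_\tau(1)$ vanishes on the non-quasi-split side (as the group is not quasi-split), while on the quasi-split side $\sum_{\tau\in\Pi_\phi(G_{\bar Q}^{qs})}c_\tau(1)$ counts the generic members of the Levi packet; this equals $1$ by the ``unique generic element'' condition of Item (\ref{item:generic}) of Theorem~\ref{endoscopy U} and Item (\ref{item:conjecture-2}) of Conjecture~\ref{endoscopy} applied to $M_Q$. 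For the elliptic torus integrals I would use that at regular semisimple $t$ one has $c_\tau(t)=\theta_\tau(t)$ (since $\Fg_t$ has only the zero nilpotent orbit), which turns each integrand into a character sum $\sum_\tau\theta_\tau(t)$. Since every elliptic maximal torus in one pure inner form of $H_{0,\bar Q}$ transfers to a stably conjugate torus in the other pure inner form, I pair the tori accordingly and apply the sign-flip identity of Item (\ref{item:endoscopy-3}) of Theorem~\ref{endoscopy U} and Item (\ref{item:conjecture-3}) of Conjecture~\ref{endoscopy} (inherited by the Levi $M_Q$) to cancel the two contributions against each other. Collecting everything, $\sum_{\tau\in\Pi}m_{geom}(\tau)=0+1+0=1$.

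The main obstacle is making the endoscopic classification precise for the Levi packets on $M_Q$ that appear in the Type I list. Since $M_Q$ is a product of $\GL$-factors with a smaller $\GU$-factor (and similarly for the unitary case with $\RU$), the expected picture is that every $L$-packet on $M_Q$ is an external tensor product of the (singleton) packet on the $\GL$-side with an $L$-packet on the $\GU$-side; the three needed properties (stability, the unique generic element in the quasi-split form, and the sign flip for stably conjugate regular elements between the two forms) then propagate multiplicatively from the $\GU$-factor, the $\GL$-factor being trivial in each respect. A secondary technical point is the matching of Haar measures on stably conjugate elliptic tori across the two forms, so that the pairwise cancellation in the integrals is exact; this is standard once the transfer of measures is fixed.
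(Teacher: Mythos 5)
Your proof plan follows the paper's argument essentially step for step: item (2) is deduced from item (1) by non-negativity, Type II is handled via Rodier's formula for a generic representation of a product of general linear groups, and Type I is handled by invoking the multiplicity formula, extracting the identity-germ contribution via the unique-generic condition, pairing elliptic tori across the two pure inner forms, and cancelling the corresponding integrals via the sign-flip identity from Conjecture~\ref{endoscopy}. The only cosmetic difference is that the paper works out one Type I case ($\GL_2(E)\times\GU(J_{2,\varepsilon})$) in full and declares the others similar, whereas your outline speaks of the general Type I structure up front; your passing remark about measure transfer on stably conjugate tori is a point the paper leaves implicit but which is needed in the same way.
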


\begin{proof}
\eqref{thm:main-thm-item-2} is a direct consequence of \eqref{item:main-thm-1}. For \eqref{item:main-thm-1}, if the reduced model is of Type II, by the discussion above, the $L$-packet $\Pi$ only contains one element $\tau$ which is an irreducible tempered representation of some general linear group. Then we know that $\tau$ is a generic representation. Combining the multiplicity formula in the previous section and the work of Rodier in \cite{Rod81}, we have
$$\sum_{\tau\in \Pi} m(\tau)=m(\tau)=m_{geom}(\tau)=c_{\tau}(1)=1.$$
This proves \eqref{item:main-thm-1} for Type II reduced models.

Next, we consider the Type I reduced models. We will only consider the case when $G_{\bar{Q}}(F)\simeq \GL_2(E)\times \GU(J_{2,\varepsilon})(F)$. The arguments for the rest two cases are similar. For $i=1,2$, fix $\varepsilon_i\in F^{\times}$ with $\eta_{E/F} (\varepsilon_i)=(-1)^{i-1}$ and let $G_{\bar{Q},\varepsilon_i}(F)=\GL_2(E)\times \GU(J_{2,\varepsilon_i})(F)$. Then the $L$-packet $\Pi$ is of the form
$$\Pi=\{\tau_0\otimes \tau_{\varepsilon_1} \colon \tau_{\varepsilon_1}\in \Pi_{\phi}(\GU(J_{2,\varepsilon_1}))\}\cup \{\tau_0\otimes \tau_{\varepsilon_2} \colon \tau_{\varepsilon_2}\in \Pi_{\phi}(\GU(J_{2,\varepsilon_2}))\}$$
where $\tau_0$ is some irreducible tempered representation of $\GL_2(E)$, and $\Pi_{\phi}=\Pi_{\phi}(\GU(J_{2,\varepsilon_1}))\cup \Pi_{\phi} (\GU(J_{2,\varepsilon_2}))$ is a tempered local $L$-packet of $\GU_2(F)$ as in Conjecture \ref{endoscopy}. By the multiplicity formula in the previous subsection, we know that $\sum_{\tau\in \Pi}m(\tau)$ is equal to
\begin{align*}
&\sum_{i=1}^{2} \sum_{\tau_{\varepsilon_i}\in \Pi_{\phi}(\GU(J_{2,\varepsilon_i}))} c_{\tau_0}(1)c_{\tau_{\varepsilon_i}}(1) +\sum_{i=1}^{2}\sum_{\tau_{\varepsilon_i}\in \Pi_{\phi}(\GU(J_{2,\varepsilon_i}))} \sum_{T_i\in \CT_{ell}(\GU(J_{2,\varepsilon_i}))}\\
&\times\nu(T_i) \int_{T_i(F)/Z_{\GU(J_{2,\varepsilon_i})}(F)} D^{\GU(J_{2,\varepsilon_i})}(t_i) \theta_{\tau_0}(t_i) \theta_{\tau_{\varepsilon_i}}(t_i) \omega_{\varepsilon_i}^{-1}(t_i)\ud t_i
\end{align*}
with $\nu(T_i)=|W(\GU(J_{2,\varepsilon_i}),T_i)|^{-1} \vol(T_i(F)/Z_{\GU(J_{2,\varepsilon_i})}(F))^{-1}$. Since $\tau_0$ is a tempered representation of $\GL_2(E)$, it is generic and hence $c_{\tau_0}(1)=1$ by the work of Rodier in \cite{Rod81}. Moreover, by Conjecture \ref{endoscopy} \eqref{item:conjecture-2} together with Rodier's work in \cite{Rod81}, we have
$$\sum_{i=1}^{2} \sum_{\tau_{\varepsilon_i}\in \Pi_{\phi}(\GU(J_{2,\varepsilon_i}))} c_{\tau_0}(1)c_{\tau_{\varepsilon_i}}(1)= \sum_{i=1}^{2} \sum_{\tau_{\varepsilon_i}\in \Pi_{\phi}(\GU(J_{2,\varepsilon_i}))} c_{\tau_{\varepsilon_i}}(1)=1.$$
Hence $\sum_{\tau\in \Pi}m(\tau)$ equals
\begin{align*}
1+&\sum_{i=1}^{2}\sum_{T_i\in \CT_{ell}(\GU(J_{2,\varepsilon_i}))}  \nu(T_i) \\
 &\times\int_{T_i(F)/Z_{\GU(J_{2,\varepsilon_i})}(F)} D^{\GU(J_{2,\varepsilon_i})}(t_i) \theta_{\tau_0}(t_i) \theta_{\Pi_{\phi}(\GU(J_{2,\varepsilon_i}))}(t_i) \omega_{\varepsilon_i}^{-1}(t_i) \ud t_i.
\end{align*}
Here we recall from Conjecture \ref{endoscopy} that $\theta_{\Pi_{\phi}(\GU(J_{2,\varepsilon_i}))}=\sum_{\tau_{\varepsilon_i}\in \Pi_{\phi} (\GU(J_{2,\varepsilon_i}))} \theta_{\tau_{\varepsilon_i}}$.

Now we are ready to prove the theorem. We have a natural bijection
$$T_1\in \CT_{ell}(\GU(J_{2,\varepsilon_1}))\leftrightarrow T_2\in \CT_{ell}(\GU(J_{2,\varepsilon_2}))$$
between the maximal elliptic tori of $\GU(J_{2,\varepsilon_1})(F)$ and $\GU(J_{2,\varepsilon_2})(F)$. Hence in order to prove the theorem, it is enough to show that for all $T_1\leftrightarrow T_2$, we have
\begin{align*}
&\nu(T_1) \int_{T_1(F)/Z_{\GU(J_{2,\varepsilon_1})}(F)} D^{\GU(J_{2,\varepsilon_1})}(t_1) \theta_{\tau_0}(t_1) \theta_{\Pi_{\phi} (\GU(J_{2,\varepsilon_1}))}(t_1) \omega_{\varepsilon_1}^{-1}(t_1)\ud t_1\\
=&-\nu(T_2) \int_{T_2(F)/Z_{\GU(J_{2,\varepsilon_2})}(F)} D^{\GU(J_{2,\varepsilon_2})}(t_2) \theta_{\tau_0}(t_2) \theta_{\Pi_{\phi} (\GU(J_{2,\varepsilon_2}))}(t_2) \omega_{\varepsilon_2}^{-1}(t_2)\ud t_2.
\end{align*}
We fix such a pair $(T_1,T_2)$. It is easy to see that $\nu(T_1)=\nu(T_2)$. For $t_1\in T_1(F)$ and $t_2\in T_2(F)$, we write $t_1\leftrightarrow t_2$ if they have the same characteristic polynomial. Then it is enough to show that for all $t_1\in T_1(F)_{reg}$ and $t_2\in T_2(F)_{reg}$ with $t_1\leftrightarrow t_2$, we have
\begin{align}
&D^{\GU(J_{2,\varepsilon_1})}(t_1) \theta_{\tau_0}(t_1) \theta_{\Pi_{\phi}(\GU(J_{2,\varepsilon_1}))}(t_1) \omega_{\varepsilon_1}^{-1}(t_1) \nonumber \\
=& -D^{\GU(J_{2,\varepsilon_2})}(t_2) \theta_{\tau_0}(t_2) \theta_{\Pi_{\phi}(\GU(J_{2,\varepsilon_2}))}(t_2) \omega_{\varepsilon_2}^{-1}(t_2). \label{6.1}
\end{align}
Since $t_1\leftrightarrow t_2$, we have
$$D^{\GU(J_{2,\varepsilon_1})}(t_1) \theta_{\tau_0}(t_1)  \omega_{\varepsilon_1}^{-1}(t_1)= D^{\GU(J_{2,\varepsilon_2})}(t_2) \theta_{\tau_0}(t_2) \omega_{\varepsilon_2}^{-1}(t_2).$$
By Conjecture \ref{endoscopy} \eqref{item:conjecture-3}, we also have $\theta_{\Pi_{\phi}(\GU(J_{2,\varepsilon_1}))}(t_1)=-\theta_{\Pi_{\phi}(\GU(J_{2,\varepsilon_2}))} (t_2)$. This proves \eqref{6.1} and completes the proof of the theorem.
\end{proof}

\noindent
The following proposition will be proved in Appendix \ref{sec:appendix} by the orbit method.
\begin{prop}\label{Gelfand pair}
Let $\bar{Q}=M_Q\bar{U}_{Q}$ be a good parabolic subgroup of $G$, and let $\tau$ be an admissible tempered representation of $M_{Q}(F)$ whose central character equals $\eta$ on $Z_G(F)$. Set $\pi=I_{\bar{Q}}^{G}(\tau)$. Then
$$m(\pi)\leq m(\tau).$$
\end{prop}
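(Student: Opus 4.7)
The plan is to analyze $\Hom_{H(F)}(\pi,\omega\otimes \xi)$ by Mackey-type geometric analysis of the $H(F)$-orbits on the flag variety $\bar{Q}\backslash G$. Since $\bar{Q}$ is a good parabolic subgroup, Proposition~\ref{spherical}\eqref{item:prop-spherical-1} tells us that the double coset $\bar{Q}H$ is Zariski open in $G$ with complement of strictly smaller dimension. I would stratify the restriction $\pi|_{H(F)}$ according to the $H$-orbit decomposition of $\bar{Q}\backslash G$, obtaining an exact sequence
\begin{equation*}
0\longrightarrow \pi_{\mathrm{closed}} \longrightarrow \pi|_{H(F)} \longrightarrow \pi_{\mathrm{open}} \longrightarrow 0,
\end{equation*}
where $\pi_{\mathrm{open}}$ consists of sections of the induction bundle over the open orbit $\bar{Q}\backslash \bar{Q}H\cong H_{\bar{Q}}(F)\backslash H(F)$ and $\pi_{\mathrm{closed}}$ corresponds to distributions supported on the complement. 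Applying $\Hom_{H(F)}(-,\omega\otimes \xi)$ then produces an injection
\begin{equation*}
\Hom_{H(F)}(\pi,\omega\otimes \xi)\ \hookrightarrow\ \Hom_{H(F)}(\pi_{\mathrm{open}},\omega\otimes \xi)\,\oplus\,(\text{contributions from non-open orbits}).
\end{equation*}

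For the open-orbit piece, standard Frobenius reciprocity identifies $\Hom_{H(F)}(\pi_{\mathrm{open}},\omega\otimes \xi)$ with a Hom-space on $H_{\bar{Q}}(F)$, where all modular characters match up precisely because of Proposition~\ref{major 5}\eqref{item:major-5-2} (equivalently Proposition~\ref{major 5}(1)), which gives $\delta_{\bar{Q}}\vert_{H_{\bar{Q}}}=\delta_{H_{\bar{Q}}}$. Combining this with the definition $\omega_Q\otimes \xi_Q=(\omega\otimes \xi)\vert_{H_{\bar{Q}}(F)}\otimes \delta_{H_{\bar{Q}}}^{1/2}$, the open-orbit contribution becomes exactly $\Hom_{H_{\bar{Q}}(F)}(\tau,\omega_Q\otimes \xi_Q)$, whose dimension is $m(\tau)$ by definition.

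The main remaining task, and the main obstacle, is to show that no nonzero $(H(F),\omega\otimes \xi)$-equivariant functional on $\pi$ can be supported on the closed complement of the open orbit; that is, the term labeled ``contributions from non-open orbits'' above must vanish. This is where the orbit method enters. The strategy is to enumerate the finitely many non-open $H(F)$-orbits on $\bar{Q}\backslash G$, compute the stabilizer of a representative on each, and examine the slice representation along the conormal direction to that orbit. For each such orbit, one shows that any candidate equivariant distribution would have to produce a functional on a twisted Jacquet-type module for $\tau$ against a character whose restriction to the unipotent part of the stabilizer is a nontrivial translate of $\xi$; the genericity of $\xi$ on $U_{\varepsilon}$ and its incompatibility with the appropriate modular-character twist then forces this functional to vanish.

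The hard step is the systematic case-by-case verification in the previous paragraph; although the spherical structure of $(G,H)$ keeps the number of non-open orbits small, one must carefully track the normal bundle twist and verify the incompatibility with $\tau\otimes \delta_{\bar{Q}}^{1/2}$. I expect the argument to parallel the corresponding step for the Ginzburg-Rallis model and for the Galois model treated in \cite{B17}, where exactly the same non-degeneracy of the generic character eliminates the non-open orbits. Once that vanishing is in hand, the injection above collapses to $\Hom_{H(F)}(\pi,\omega\otimes \xi)\hookrightarrow \Hom_{H_{\bar{Q}}(F)}(\tau,\omega_Q\otimes \xi_Q)$ of dimension $\leq m(\tau)$, proving the proposition.
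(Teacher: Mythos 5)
Your approach matches the paper's Appendix \ref{sec:appendix}: both apply the Bernstein--Zelevinsky geometric lemma to the $H(F)$-orbit stratification of $\bar{Q}\backslash G$, identify the open-orbit contribution with $m(\tau)$ via Frobenius reciprocity and the modular-character identity $\delta_{\bar{Q}}|_{H_{\bar{Q}}}=\delta_{H_{\bar{Q}}}$, and then argue that the non-open orbits contribute nothing. Two remarks, one small and one substantive.

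First, a small sign error: for the induced module $I_{\bar Q}^G(\tau)=\Gamma(\bar{Q}\backslash G,\mathcal L_\tau)$, the sections supported on the open $\bar Q$--$H$ double coset form a \emph{submodule} $V_1$ (this is what the paper uses: $V_1\subset V_2\subset\cdots\subset V_k=\pi$), not a quotient, so your exact sequence should read $0\to\pi_{\mathrm{open}}\to\pi|_{H(F)}\to\pi_{\mathrm{closed}}\to 0$. This does not affect the dimension bound $m(\pi)\le\sum_i\dim\Hom_{H(F)}(V_i/V_{i-1},\xi)$, which holds by left-exactness of $\Hom$ for any filtration, but it is the submodule nature of $V_1$ that makes $\Hom_{H(F)}(\pi,\xi)\to\Hom_{H(F)}(V_1,\xi)$ injective once the other terms vanish.

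Second, and more importantly, the vanishing on non-open orbits is not purely a consequence of the genericity of $\xi$. For most intermediate orbits the restriction of ${}^{\gamma_i}\xi$ to $\bar U_Q(F)\cap\gamma_iH(F)\gamma_i^{-1}$ is indeed nontrivial and kills the Hom space immediately, just as you describe. But for the closed orbit in each case (the $\gamma_{10}$, $\gamma_5$, $\gamma_5$ representatives in the three maximal-parabolic cases) the twisted character is \emph{trivial} on the unipotent intersection, so genericity gives nothing. There one must pass to a Jacquet module of $\tau$ along the Levi and compare exponents: the modular twist $\delta_{Q_i}\delta_{\bar Q}^{-1/2}$ forces a central character that Casselman's temperedness bounds (Proposition III.2.2 of \cite{W03}) exclude. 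Your phrase ``incompatibility with the appropriate modular-character twist'' points in the right direction, but this is precisely the hard step, and it is orbit-by-orbit: the paper first reduces to $\bar Q$ maximal by induction on the reduced models (inequality \eqref{A reduced model}), which cuts the list of orbits to a manageable size, and then treats each maximal parabolic separately, isolating the single closed orbit where the exponent computation is required. Without that reduction and the explicit identification of which orbit needs the temperedness argument, the sketch leaves the essential part of the proof unverified.
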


\begin{rmk}\label{irreducible}
In order to prove Proposition \ref{Gelfand pair}, it is enough to consider the case when $\tau$ is irreducible.
\end{rmk}

\begin{cor}\label{Gelfand pair cor}
For all $\pi\in \Pi_{temp}(G,\eta)\smallsetminus\Pi_2(G,\eta)$, we have
$$m(\pi)\leq 1.$$
\end{cor}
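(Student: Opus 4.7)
The plan is to deduce the corollary at once from Proposition \ref{Gelfand pair} together with Theorem \ref{main theorem for reduced models} \eqref{thm:main-thm-item-2}, after observing that every $\pi$ in $\Pi_{temp}(G,\eta)\setminus\Pi_{2}(G,\eta)$ is realized as parabolic induction from a discrete series of a proper Levi subgroup, and that up to equivalence we may take the inducing parabolic to be good.

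More precisely, by the very definition of $\Pi_{temp}(G,\eta)$ recalled in Section \ref{sec:preliminary}, any such $\pi$ arises as $\pi=I_{P}^{G}(\sigma)$ where $P=MU$ is a proper parabolic subgroup and $\sigma\in\Pi_{2}(M,\eta)$ is a discrete series of $M(F)$ whose central character agrees with $\eta$ on $Z_{G}(F)$. The next step is to arrange that the inducing parabolic is good. Fix a good minimal parabolic subgroup $\bar{P}_{min}$ of $G$ (Proposition \ref{spherical} \eqref{item:prop-spherical-1}); then every Levi subgroup of $G$ is $G(F)$-conjugate to the Levi of some parabolic containing $\bar{P}_{min}$, and each such parabolic is good by Proposition \ref{spherical} \eqref{item:prop-spherical-2}. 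Conjugating $P$ into such a standard parabolic and transporting $\sigma$ along the conjugation produces an isomorphic induced representation; combined with the standard fact that, for tempered inducing data, the isomorphism class of $I_{P}^{G}(\sigma)$ depends only on the associate class of $(M,\sigma)$ (so opposite parabolics give the same $\pi$), we may assume $P=\bar{Q}$ is a good parabolic subgroup and $\sigma$ is a discrete series of $M_{Q}(F)=G_{\bar{Q}}(F)$.

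With this normalization, Proposition \ref{Gelfand pair} (applicable because $\sigma$ is irreducible tempered, cf.\ Remark \ref{irreducible}) yields
\[
m(\pi)=m(I_{\bar{Q}}^{G}(\sigma))\leq m(\sigma),
\]
and Theorem \ref{main theorem for reduced models} \eqref{thm:main-thm-item-2}, applied to the reduced model $(G_{\bar{Q}},H_{\bar{Q}})$ (noting that $\sigma$ has central character matching $\eta$ on $Z_{G}(F)$), gives $m(\sigma)\leq 1$. Combining the two inequalities proves the corollary. There is no genuine obstacle in this deduction: all of the substantive content has been absorbed into Proposition \ref{Gelfand pair}, whose proof via the orbit method is postponed to Appendix \ref{sec:appendix}, and into the multiplicity one statement for reduced models, which is available by the inductive hypothesis that the multiplicity formula holds for all proper reduced models.
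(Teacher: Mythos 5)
Your proof is correct and follows essentially the same route as the paper: realize $\pi=I_{\bar{Q}}^{G}(\tau)$ for a good parabolic $\bar{Q}$, then chain Proposition \ref{Gelfand pair} with Theorem \ref{main theorem for reduced models}\,\eqref{thm:main-thm-item-2}. You simply spell out the normalization step (conjugating the inducing parabolic into one containing a good minimal parabolic) that the paper leaves implicit.
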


\begin{proof}
Since $\pi\in \Pi_{temp}(G,\eta)\smallsetminus\Pi_2(G,\eta)$, we can find a good parabolic subgroup $\bar{Q}=M_Q\bar{N}_{Q}$ of $G$ and an irreducible tempered representation $\tau$ of $M_Q(F)$ such that $\pi=I_{\bar{Q}}^{G}(\tau)$. By Theorem \ref{main theorem for reduced models} (2) and Proposition \ref{Gelfand pair}, we have
$$m(\pi)= m(I_{\bar{Q}}^{G}(\tau))\leq m(\tau)\leq 1.$$
This proves the Corollary.
\end{proof}

The following theorem is a stronger version of Proposition \ref{Gelfand pair}. It will be proved in Section \ref{sec:intertwining} by assuming Proposition \ref{Gelfand pair} holds.
\begin{thm}\label{multiplcity parabolic induction}
With the same assumptions and notation as in Proposition \ref{Gelfand pair}, we have $m(\pi)= m(\tau).$
\end{thm}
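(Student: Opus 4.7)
Since Proposition~\ref{Gelfand pair} already gives $m(\pi) \leq m(\tau)$, the task reduces to constructing an injective linear map
$$\Hom_{H_{\bar{Q}}(F)}(\tau, \omega_Q \otimes \xi_Q) \hookrightarrow \Hom_{H(F)}(\pi, \omega \otimes \xi).$$
My plan is a Frobenius-reciprocity style construction on the open orbit. Given $\ell$ in the source, I would define a linear form on $\pi = I_{\bar{Q}}^{G}(\tau)$ by
$$L(e) := \int_{H_{\bar{Q}}(F) \backslash H(F)} \ell(e(h))\, (\omega \otimes \xi)(h)^{-1} \, dh,$$
where the measure on the quotient is chosen so that the integrand descends. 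Well-definedness on the quotient follows from the induction formula $e(m_q \bar{u}_q g) = \delta_{\bar{Q}}(m_q)^{1/2} \tau(m_q) e(g)$, the equivariance of $\ell$, the identity $\omega_Q \otimes \xi_Q = (\omega \otimes \xi) \cdot \delta_{H_{\bar{Q}}}^{1/2}$ on $H_{\bar{Q}}(F)$, and the equality $\delta_{\bar{Q}}|_{H_{\bar{Q}}} = \delta_{H_{\bar{Q}}}$ from Proposition~\ref{major 5}\eqref{item:major-5-2}; together these make the integrand transform exactly by the modular character needed for a right $H(F)$-invariant integration against the semi-invariant measure on $H_{\bar{Q}}(F)\backslash H(F)$.

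The convergence of $L(e)$ is the technical heart of the argument. For $e$ in the compact-model realization $I_{K\cap \bar{Q}}^{K}(\tau_K)$ of the tempered induced representation $\pi$, the scalar $\ell(e(h))$ is essentially a matrix coefficient of the tempered representation $\tau$ evaluated at the $M_Q$-component of $h$; such a matrix coefficient is essentially bounded by $\Xi^{G_{\bar{Q}}}(h_{\bar{Q}}) \sigma_0(h_{\bar{Q}})^{-d}$ for any $d \geq 0$. Pulling the integral back to a cross-section of $H_{\bar{Q}}(F)\backslash H(F)$ via the weak Cartan decomposition of Proposition~\ref{cartan 1} and applying Proposition~\ref{major 5}\eqref{item:major-5-2}, which asserts the absolute convergence of $\int_{Z_H(F) \backslash H_{\bar{Q}}(F)} \Xi^{G_{\bar{Q}}}(h_{\bar{Q}}) \sigma_0(h_{\bar{Q}})^{-d} \delta_{H_{\bar{Q}}}^{1/2}(h_{\bar{Q}})\, dh_{\bar{Q}}$ for $d$ sufficiently large, yields absolute convergence and continuity of $L$ on $\pi$.

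The $H(F)$-equivariance of $L$ with character $\omega \otimes \xi$ follows by the change of variables $h \mapsto h h_0^{-1}$ in $L(\pi(h_0) e)$, which produces exactly $(\omega \otimes \xi)(h_0) L(e)$. For injectivity of the assignment $\ell \mapsto L$, I would use that $H(F) \bar{P}_{\min}(F)$, and hence $H(F) \bar{Q}(F)$, is Zariski open in $G(F)$ (Proposition~\ref{spherical}) to pick, given any $v \in V_\tau$ with $\ell(v) \neq 0$, a smooth section $e$ supported in a sufficiently small neighborhood of $1$ inside the open orbit with $e(1) = v$; on such a section the entire integral localizes to $\ell(v)$ times a positive volume, so $L \neq 0$.

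The main obstacle is ensuring the convergence for merely tempered $\pi$, as opposed to the discrete series case where $e$ can be taken to be a matrix coefficient with sharper decay. Here the integrand has no compact support on the quotient, and one must genuinely exploit the interaction between the weak Cartan decomposition and the precise $\delta_{H_{\bar{Q}}}^{1/2}$-twisted estimates of Proposition~\ref{major 5}. Once $L$ is constructed, shown to be continuous, $H(F)$-equivariant, and nonzero whenever $\ell$ is, the injectivity $\ell \mapsto L$ combined with the reverse inequality from Proposition~\ref{Gelfand pair} yields the claimed equality $m(\pi) = m(\tau)$.
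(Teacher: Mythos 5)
Your proposal takes a genuinely different route from the paper, but it contains a gap that, as stated, cannot be repaired with the estimates you invoke, and in fact the paper's architecture is designed precisely to avoid the problem you run into.

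The paper proves the theorem by an indirect argument: it first observes that, thanks to Theorem~\ref{main theorem for reduced models} (which gives $m(\tau)\le 1$) and Proposition~\ref{Gelfand pair} ($m(\pi)\le m(\tau)$), the statement reduces to the implication $m(\tau)\neq 0\Rightarrow m(\pi)\neq 0$. That implication is then established through the regularized intertwining machinery: one defines $\CL_\pi$ (and its reduced-model analogue $\CL_\tau$) via the \emph{regularized} integral $\int^*$ of Proposition~\ref{major 8}, proves $\CL_\pi\neq 0\iff m(\pi)\neq 0$ (Proposition~\ref{m(pi) L(pi)}), $\CL_\tau\neq 0\iff m(\tau)\neq 0$ (reduced-model analogue), and $\CL_\tau\neq 0\iff\CL_\pi\neq 0$ (Proposition~\ref{parabolic induction}). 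The whole point of working with $\CL_\pi$ and $\CP_{H,\omega\otimes\xi}$ as regularized objects is that, for tempered but non-discrete-series $\pi$, the integrals one would like to write down do \emph{not} converge absolutely.

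Your proposal instead tries to build an honest injection $\Hom_{H_{\bar Q}(F)}(\tau,\omega_Q\otimes\xi_Q)\hookrightarrow\Hom_{H(F)}(\pi,\omega\otimes\xi)$ by open-orbit Frobenius reciprocity, which is a cleaner idea in principle and does not use the multiplicity-one inputs. The gap is exactly where you flag it: the convergence of $L(e)=\int_{H_{\bar Q}(F)\backslash H(F)}\ell(e(h))(\omega\otimes\xi)(h)^{-1}\,dh$. You assert that $\ell(e(h))$ ``is essentially a matrix coefficient of $\tau$'' and is essentially bounded by $\Xi^{G_{\bar Q}}(h_{\bar Q})\sigma_0(h_{\bar Q})^{-d}$ for all $d\geq 0$. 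Both halves of this are false in the generality you need. First, $\ell$ is an abstract $H_{\bar Q}(F)$-equivariant functional, not a smooth vector of $\tau^\vee$, so $\ell(\tau(\cdot)v)$ is a model coefficient, not a matrix coefficient, and has no a priori $\Xi$-type bound without separate work. Second, even genuine matrix coefficients of tempered representations are bounded only by $\Xi\cdot\sigma^{d}$ for some $d>0$, not by $\Xi\cdot\sigma^{-d}$; the latter rapid decay holds only for discrete series, as the paper itself notes in the proof of Lemma~\ref{lemma intertwinings} (``for tempered representations the right hand side is $\hc(x)\nor(x)^{d}$; here since we only consider discrete series, we can have the better bound $\hc(x)\nor(x)^{-d}$''). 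Moreover, Proposition~\ref{major 5}\eqref{item:major-5-2} controls an integral over $Z_H(F)\backslash H_{\bar Q}(F)$, whereas your integral ranges over the transversal direction $H_{\bar Q}(F)\backslash H(F)$; the weak Cartan decomposition of Proposition~\ref{cartan 1} also does not directly parametrize that quotient. So the convergence step, which you correctly identify as ``the technical heart,'' is not established and would fail with the bounds cited. The paper sidesteps exactly this difficulty by working with the regularized $\CL_\pi$ and exchanging the direct inequality $m(\tau)\leq m(\pi)$ for the nonvanishing statement $m(\tau)\neq 0\Rightarrow m(\pi)\neq 0$ together with the multiplicity-one bounds.
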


\section{The proof of the main theorems} \label{sec:proof-main}
In this section, we are going to prove our main theorems by assuming the trace formula in Theorem \ref{trace formula} holds. We will prove the  unitary similitude group case in Section \ref{sec:GU}, and the unitary group case in Section \ref{sec:unitary}.

\subsection{The unitary similitude  group case} \label{sec:GU}
We first prove the multiplicity formula $m(\pi)=m_{geom}(\pi)$ for all tempered representations. For simplicity, we still use $(G,H)$ to denote $(G_{\varepsilon},H_{\varepsilon})$ and $\omega\otimes \xi$ to denote $\omega_{\varepsilon}\otimes \xi_{\varepsilon}$. We need a proposition.

\begin{prop}
Let $\bar{Q}=M_Q\bar{U}_{Q}$ be a good parabolic subgroup of $G$, and let $\tau$ be an irreducible tempered representation $M_{Q}(F)$ whose central character equals $\eta$ on $Z_G(F)$. Set $\pi=I_{\bar{Q}}^{G}(\tau)$. Then
\begin{equation}\label{multiplcity parabolic induction 1}
m(\pi)=m(\tau) \text{ and } m_{geom}(\pi)=m_{geom}(\tau).
\end{equation}
\end{prop}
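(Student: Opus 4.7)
The proposal is to prove the two equalities separately, with very different methods.

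For the equality $m(\pi) = m(\tau)$, I would simply invoke Theorem~\ref{multiplcity parabolic induction}, which is precisely this statement and which is flagged as being established in Section~\ref{sec:intertwining} under the standing assumption that Proposition~\ref{Gelfand pair} holds. In particular, the inequality $m(\pi) \leq m(\tau)$ is Proposition~\ref{Gelfand pair}, and the reverse inequality will be obtained there by explicitly constructing an $H(F)$-equivariant linear form on $\pi$ out of one on $\tau$, essentially via an intertwining/normalized Jacquet integral construction along $\bar{U}_Q$. This reduces the first equality to material already set up in the paper.

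For the equality $m_{geom}(\pi) = m_{geom}(\tau)$, the plan is to translate Harish-Chandra's parabolic descent for characters into a descent formula for regular germs, and then match the resulting expression with the definition of $m_{geom}(\tau)$ for the reduced model. Recall that for a regular semisimple $x \in G(F)$,
\[
\theta_{I_{\bar{Q}}^G(\tau)}(x) \;=\; \sum_{y} \frac{D^{M_Q}(y)^{1/2}}{D^G(x)^{1/2}} \theta_\tau(y),
\]
the sum running over $M_Q(F)$-conjugacy classes $y$ in $M_Q(F)$ that are $G(F)$-conjugate to $x$; in particular $\theta_\pi$ vanishes at $x$ unless the $G$-conjugacy class of $x$ meets $M_Q(F)$. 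Applying this to an elliptic regular $t \in T(F)$ for $T \in \CT_{ell}(H_0)$, we obtain a case split: either no $G$-conjugate of $t$ lies in $M_Q(F)$, in which case $c_\pi(t) = 0$; or $t$ may be arranged inside $M_Q(F)$ and its centralizer $G_t$ is actually contained in $M_Q$, so that $G_t = (M_Q)_t$ and the regular nilpotent orbit $\CO_t$ of $\Fg_t$ coincides with the one computed from $M_Q$. This is where the shape of the Type~I reduced models is used: the tori $\CT_{ell}(H_{0,\bar{Q}})$ are exactly the subfamily of $\CT_{ell}(H_0)$ whose elements can be conjugated into $M_Q$, while for Type~II reduced models no such torus survives and only the germ at the identity contributes, matching the definition $m_{geom}(\tau) = c_\tau(1)$ in that case. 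The identity $c_\pi(1) = c_\tau(1)$ itself follows from the classical fact that the Whittaker/regular germ at the identity is preserved under normalized parabolic induction (Rodier, van Dijk).

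The remaining work is bookkeeping: for each family of good parabolics $\bar Q$ (matching the list of Type~I models in Section~\ref{sec:trace-formula}), check that the descent rewrites
\[
\nu(T) \int_{T(F)/Z_G(F)} D^H(t)\, c_\pi(t)\, \omega^{-1}(t) \, dt
\]
as the corresponding integral over $T' \in \CT_{ell}(H_{0,\bar Q})$ paired against $c_\tau$ and $\omega_Q^{-1}$, with the right Weyl and volume constants and with the modular factor $\delta_{H_{\bar{Q}}}^{1/2}$ arising naturally from the Jacobian comparing $D^H$ and $D^{H_{\bar Q}}$ along the unipotent radical. I expect the main obstacle to be precisely this case-by-case matching of constants and of the character $\omega_Q = (\omega\otimes\xi)|_{H_{\bar Q}(F)} \otimes \delta_{H_{\bar Q}}^{1/2}$ across the different Type~I reduced models, together with verifying that the $G$-conjugacy classes of elliptic tori in $H_0$ that fail to meet $M_Q$ are exactly those absent from the geometric side of $m_{geom}(\tau)$. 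Once these checks are done, the two expressions are term-by-term equal.
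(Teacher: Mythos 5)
Your proposal matches the paper's proof in substance: the paper disposes of the first equality by citing Theorem~\ref{multiplcity parabolic induction} (exactly as you do) and disposes of the second by citing Lemma~2.3 of \cite{W12}, whose content is precisely the parabolic descent for regular germs that you write out via van~Dijk's induced character formula. What you have done is open up that black box and sketch the bookkeeping that the citation encapsulates; the only small inaccuracy is your remark that the modular factor $\delta_{H_{\bar Q}}^{1/2}$ ``arises naturally from the Jacobian'' --- for the Type~I reduced models that actually carry elliptic tori, the paper notes this factor is trivial, so the matching of the elliptic integrals is a genuine torus-by-torus identification with no modular twist, while for Type~II models there is no elliptic contribution and only $c_\pi(1)=c_\tau(1)$ is needed, exactly as you say.
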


\begin{proof}
The first equality follows from Theorem \ref{multiplcity parabolic induction}, so it is enough to prove $m_{geom}(\pi)=m_{geom}(\tau)$. This is a direct consequence of Lemma 2.3 of \cite{W12} together with the definitions of $m_{geom}(\pi)$ and $m_{geom}(\tau)$.
\end{proof}

Now the multiplicity formula will be a direct consequence of the trace formula in Theorem \ref{trace formula}, together with \eqref{multiplcity parabolic induction 1}. The argument is the same as the GGP case (Proposition 11.3.1 of \cite{B15}) and we will skip it here.

We are ready to prove Theorem \ref{main GU}. The proof is very similar to the proof of Theorem \ref{main theorem for reduced models} for the reduced model cases, so we only give a sketch of it. For $i=1,2$, fix $\varepsilon_i\in F^{\times}$ with $\eta_{E/F}(\varepsilon_i)=(-1)^{i-1}$ as before. Let $\Pi_{\phi}$ be a local tempered Vogan $L$-packet of $\GU_6$. Then we can write $\Pi_{\phi}$ as $\Pi_{\phi}=\Pi_{\phi}(G_{\varepsilon_1})\cup \Pi_{\phi}(G_{\varepsilon_2})$. Our goal is to show that
\begin{equation}\label{5.1}
\sum_{\pi\in \Pi_{\phi}}m(\pi)=\sum_{i=1}^{2}\sum_{\pi_{\varepsilon_i}\in \Pi_{\phi}(G_{\varepsilon_i})} m(\pi_{\varepsilon_i}) =1.
\end{equation}

By the multiplicity formula, we have
\begin{align*}
\sum_{\pi\in \Pi_{\phi}}m(\pi)=&\sum_{i=1}^{2}\sum_{\pi_{\varepsilon_i}\in \Pi_{\phi}(G_{\varepsilon_i})} \left( c_{\pi_{\varepsilon_i}}(1)+\sum_{T_i\in \CT_{ell}(H_{0,\varepsilon_i})}   |W(H_{0,\varepsilon_i},T_i)|^{-1} \right. \\
& \left. \vol(T_i(F)/Z_{G_{\varepsilon_i}}(F))^{-1}\int_{T_i(F)/Z_{G_{\varepsilon_i}}(F)} D^{H_{\varepsilon_i}}(t_i)c_{\pi_{\varepsilon_i}}(t_i) \omega_{\varepsilon_i}^{-1}(t_i)\ud t_i \right).
\end{align*}
By the same argument as in the proof of Theorem \ref{main theorem for reduced models}, together with Conjecture \ref{endoscopy} \eqref{item:conjecture-2} and \eqref{item:conjecture-3}, we can show that
\begin{align*}
 &\sum_{i=1}^{2}\sum_{\pi_{\varepsilon_i}\in \Pi_{\phi}(G_{\varepsilon_i})} c_{\pi_{\varepsilon_i}}(1)=1,\\
&\sum_{i=1}^{2}\sum_{\pi_{\varepsilon_i}\in \Pi_{\phi}(G_{\varepsilon_i})} \sum_{T_i\in \CT_{ell}(H_{0,\varepsilon_i})}  \nu(T_i)\int_{T_i(F)/Z_{G_{\varepsilon_i}}(F)} D^{H_{\varepsilon_i}}(t_i)c_{\pi_{\varepsilon_i}}(t_i) \omega_{\varepsilon_i}^{-1}(t_i)\ud t_i=0
\end{align*}
with $\nu(T_i)=|W(H_{0,\varepsilon_i},T_i)|^{-1} \vol(T_i(F)/Z_{G_{\varepsilon_i}}(F))^{-1}$. This proves \eqref{5.1} and finishes the proof of Theorem \ref{main GU}.

\subsection{The unitary group case} \label{sec:unitary}

In this subsection, we are going to prove Theorem \ref{main U}. The idea is to study the relations between the models associated to  unitary similitude groups and the models associated to unitary groups. We first recall the definition of the character $\omega_{\varepsilon}$ (resp. $\omega_{1,\varepsilon}$) of $H_{0,\varepsilon}(F)$ (resp. $H_{0,1,\varepsilon}(F)$) in Section \ref{sec:introduction}. We recall
\begin{align*}
&\omega_{\varepsilon}(m(h,h))=\chi_E(\det(h))\chi_F(\lambda(h)) \text{ for } h\in \GU(J_{2,\varepsilon})(F),\\
&\omega_{1,\varepsilon}(m(h_1,h_1))=\chi_E(\det(h_1)) \text{ for } h_1\in \mathrm{U}(J_{2,\varepsilon})(F),
\end{align*}
where $\chi_E$ (resp. $\chi_F$) is a character of $E^{\times}$ (resp. $F^{\times}$), $\lambda$ is the similitude character, $m(h,h)=diag(h,h,\lambda(h)w_2{}^t\bar{h}^{-1}w_2)\in H_{0,\varepsilon}(F)$ and $m(h_1,h_1)=diag(h_1,h_1,w_2{}^t\bar{h}_{1}^{-1}w_2)\in H_{0,1,\varepsilon}(F)$. Let $\eta_{E/F}:F^{\times}\rightarrow \BC^{\times}$ be the quadratic character associated to $E$ as before. We define a new character $\omega_{\varepsilon}'$ of $H_{0,\varepsilon}(F)$ to be
$$\omega_{\varepsilon}'(m(h,h))=\chi_E(\det(h))\chi_F(\lambda(h))\eta_{E/F}(\lambda(h)),\;h\in \GU(J_{2,\varepsilon})(F).$$
And for any smooth admissible representation $\pi_{\varepsilon}$ of $G_{\varepsilon}(F)$ with central character $\eta$ (note that $\omega_{\varepsilon}$ is equal to $\omega_{\varepsilon}'$ on the center of $G_{\varepsilon}(F)$), we define the multiplicity
$$m(\pi_{\varepsilon})'=\dim(\Hom_{H_{\varepsilon}(F)} (\pi_{\varepsilon},\omega_{\varepsilon}'\otimes \xi_{\varepsilon})).$$
The next proposition is essential in the proof of Theorem \ref{main U}.

\begin{prop}\label{GU v.s. U}
For any irreducible smooth representation $\pi_{\varepsilon}$ of $G_{\varepsilon}(F)=\GU(J_{6,\varepsilon})(F)$ with central character $\eta$, let $\pi_{1,\varepsilon}$ be the restriction of $\pi_{\varepsilon}$ to $G_{1,\varepsilon}(F)=\mathrm{U}(J_{6,\varepsilon})(F)$ which is a smooth admissible representation (not necessarily irreducible) of $G_{1,\varepsilon}(F)$ with central character $\eta_1$. Then
$$m(\pi_{1,\varepsilon})=m(\pi_{\varepsilon})+m(\pi_{\varepsilon})'.$$
\end{prop}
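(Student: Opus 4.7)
The plan is to realize both $\Hom_{H_\varepsilon(F)}(\pi_\varepsilon,\omega_\varepsilon\otimes\xi_\varepsilon)$ and $\Hom_{H_\varepsilon(F)}(\pi_\varepsilon,\omega_\varepsilon'\otimes\xi_\varepsilon)$ as isotypic subspaces of the single space
$$W:=\Hom_{H_{1,\varepsilon}(F)}(\pi_{1,\varepsilon},\omega_{1,\varepsilon}\otimes\xi_{1,\varepsilon}),$$
and show that these two subspaces together account for all of $W$.

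The first step is to observe that the unipotent radicals coincide, $U_\varepsilon=U_{1,\varepsilon}$, with $\xi_\varepsilon=\xi_{1,\varepsilon}$, and that $H_{0,1,\varepsilon}$ is normal in $H_{0,\varepsilon}$ with quotient abelian (embedding into $F^\times$ via the similitude character $\lambda$, since $\mathrm U(J_{2,\varepsilon})=\ker(\lambda)$). Since $\lambda\equiv 1$ on the unitary part and $\chi_F(1)=\eta_{E/F}(1)=1$, direct inspection shows that both $\omega_\varepsilon\otimes\xi_\varepsilon$ and $\omega_\varepsilon'\otimes\xi_\varepsilon$ restrict to $\omega_{1,\varepsilon}\otimes\xi_{1,\varepsilon}$ on $H_{1,\varepsilon}(F)$. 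Consequently, restriction of linear functionals induces natural injections
$$\Hom_{H_\varepsilon(F)}(\pi_\varepsilon,\omega_\varepsilon\otimes\xi_\varepsilon)\hookrightarrow W,\qquad \Hom_{H_\varepsilon(F)}(\pi_\varepsilon,\omega_\varepsilon'\otimes\xi_\varepsilon)\hookrightarrow W.$$

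Next I would endow $W$ with an action of $H_\varepsilon(F)$ by $(h'\cdot\ell)(v)=\ell(\pi_\varepsilon(h'^{-1})v)$; this is well-defined because $H_{1,\varepsilon}$ is normal in $H_\varepsilon$ and the character $\omega_{1,\varepsilon}\otimes\xi_{1,\varepsilon}$ is $H_\varepsilon(F)$-conjugation invariant (the former since $\omega_{1,\varepsilon}$ depends only on $\det$, the latter since $H_{0,\varepsilon}$ was designed to stabilize $\xi_\varepsilon$). Under this action $H_{1,\varepsilon}(F)$ acts by the character $(\omega_{1,\varepsilon}\otimes\xi_{1,\varepsilon})^{-1}$, and since $\pi_\varepsilon$ has central character $\eta$, the subgroup $Z_{G_\varepsilon}(F)$ acts through $\eta^{-1}=\omega_\varepsilon|_{Z_{G_\varepsilon}}^{-1}$. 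Thus $W$ becomes a representation of the finite abelian quotient $Q:=H_\varepsilon(F)/Z_{G_\varepsilon}(F)H_{1,\varepsilon}(F)$, and the two subspaces above are precisely the isotypic components of $W$ for the two characters of $Q$ obtained by twisting $\omega_\varepsilon\otimes\xi_\varepsilon$ by, respectively, the trivial character and $\eta_{E/F}\circ\lambda$.

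The crucial point, which I expect to be the main obstacle, is to verify that $Q\cong\mathbb Z/2\mathbb Z$ and that the character $\eta_{E/F}\circ\lambda$ is the nontrivial element of $\widehat Q$. By the first step this reduces to showing $\lambda(\GU(J_{2,\varepsilon})(F))=F^\times$, equivalently that the binary Hermitian form $a\bar a-\varepsilon b\bar b$ on $E^2$, viewed as a $4$-dimensional quadratic form $\langle 1,-\alpha,-\varepsilon,\varepsilon\alpha\rangle$ over $F$, is universal. Writing down a general $g=\left(\begin{smallmatrix}a&b\\c&d\end{smallmatrix}\right)\in\GU(J_{2,\varepsilon})(F)$, the defining relations force $\lambda(g)=N_{E/F}(a)-\varepsilon N_{E/F}(b)$ with $(a,b)$ essentially arbitrary subject to $\lambda(g)\ne 0$; universality then follows because every nondegenerate $4$-dimensional quadratic form over a non-archimedean local field is universal (the anisotropic $4$-dimensional form is the norm form of a quaternion division algebra, whose reduced norm is surjective).

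Once $Q\cong\mathbb Z/2\mathbb Z$ is in hand, the decomposition
$$W=\Hom_{H_\varepsilon(F)}(\pi_\varepsilon,\omega_\varepsilon\otimes\xi_\varepsilon)\oplus \Hom_{H_\varepsilon(F)}(\pi_\varepsilon,\omega_\varepsilon'\otimes\xi_\varepsilon)$$
into the two character-eigenspaces is immediate from the abelian Fourier decomposition on the finite group $Q$, and taking dimensions yields $m(\pi_{1,\varepsilon})=m(\pi_\varepsilon)+m(\pi_\varepsilon)'$ as claimed.
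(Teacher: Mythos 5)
Your argument is correct, and it takes a genuinely different (and in one respect stronger) route than the paper's. The paper proceeds by an explicit case analysis: it fixes a concrete element $h_\varepsilon\in H_{0,\varepsilon}(F)$ with $\lambda(h_\varepsilon)\notin\Im(N_{E/F})$, forms $T=a\cdot l+l\circ\pi_\varepsilon(h_\varepsilon)$ and $T'=a\cdot l-l\circ\pi_\varepsilon(h_\varepsilon)$ as projections onto $\Hom_{H_\varepsilon}(\pi_\varepsilon,\omega_\varepsilon\otimes\xi_\varepsilon)$ and $\Hom_{H_\varepsilon}(\pi_\varepsilon,\omega_\varepsilon'\otimes\xi_\varepsilon)$, and then laboriously enumerates subcases according to whether $\pi_{1,\varepsilon}$ is irreducible (Case 1) or splits as $\pi_1\oplus\pi_2$ (Case 2) and according to the values of $m(\pi_\varepsilon)$ and $m(\pi_\varepsilon)'$. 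Your argument replaces all of that by a single structural observation: $W=\Hom_{H_{1,\varepsilon}(F)}(\pi_{1,\varepsilon},\omega_{1,\varepsilon}\otimes\xi_{1,\varepsilon})$ carries an action of the two-element quotient $Q$, and the $\mathbb Z/2\mathbb Z$-isotypic decomposition of $W$ \emph{is} the desired statement. The element $h_\varepsilon$ and the operators $T,T'$ in the paper's proof are nothing other than a choice of representative and the two characters of $Q$, so morally you have identified the clean underlying mechanism.

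Two remarks. First, a small imprecision in your write-up: as defined, $(h'\cdot\ell)(v)=\ell(\pi_\varepsilon(h'^{-1})v)$ gives an $H_\varepsilon(F)$-action on $W$ under which $H_{1,\varepsilon}(F)Z_{G_\varepsilon}(F)$ acts by the \emph{nontrivial} character $(\omega_{1,\varepsilon}\otimes\xi_{1,\varepsilon})^{-1}$, so this action does not literally factor through $Q$. Either twist by $\omega_\varepsilon\otimes\xi_\varepsilon$ (defining $(h'\cdot\ell)(v):=(\omega_\varepsilon\otimes\xi_\varepsilon)(h')\,\ell(\pi_\varepsilon(h'^{-1})v)$), after which $H_{1,\varepsilon}(F)Z_{G_\varepsilon}(F)$ acts trivially and the $Q$-action is genuine, or keep the untwisted action and decompose $W$ under the finite abelian group $H_\varepsilon(F)/H_{1,\varepsilon}(F)Z_{G_\varepsilon}(F)$ into the two characters extending the given one; the content is identical. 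Your verification that the twisted action preserves $W$ (normality of $H_{1,\varepsilon}$ in $H_\varepsilon$ plus $H_\varepsilon$-conjugation invariance of $\omega_{1,\varepsilon}\otimes\xi_{1,\varepsilon}$) is exactly what is needed. Second, and more substantively: the paper's proof of this proposition invokes Theorem \ref{main GU} to get $m(\pi_\varepsilon),m(\pi_\varepsilon)'\le 1$, which is only established for \emph{tempered} $\pi_\varepsilon$, even though the proposition is stated for all irreducible smooth $\pi_\varepsilon$. Your proof uses no such multiplicity-one input, so it actually establishes the proposition in the generality in which it is stated. The one nontrivial input you both need — that $\lambda(\GU(J_{2,\varepsilon})(F))=F^\times$, so that $Q\cong\mathbb Z/2\mathbb Z$ and $\eta_{E/F}\circ\lambda$ is the nontrivial character — is asserted without proof in the paper (the existence of $B_\varepsilon$), and your universality argument for the $4$-dimensional quadratic form $\langle 1,-\alpha,-\varepsilon,\varepsilon\alpha\rangle$ is a legitimate way to supply it; alternatively, one can simply note $\Im(N_{E/F})\subset\lambda(\GU(J_{2,\varepsilon})(F))$ via the center and exhibit a single element like $\left(\begin{smallmatrix}0&1\\\varepsilon&0\end{smallmatrix}\right)$ with $\lambda=-\varepsilon$, which together with $\Im(N_{E/F})$ generates $F^\times$ whenever $-\varepsilon\notin\Im(N_{E/F})$, and otherwise replace $\varepsilon$ by $\varepsilon u$ for a unit $u$ with $\eta_{E/F}(u)=-1$ using a unipotent.
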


\begin{proof}
Fix an element $B_{\varepsilon}\in \GU(J_{2,\varepsilon})(F)$ such that $\delta=\lambda(B_{\varepsilon})\notin \Im(N_{E/F})$. Define $h_{\varepsilon}=m(B_{\varepsilon},B_{\varepsilon})\in H_{0,\varepsilon}(F)\subset G_{\varepsilon}(F)$. Then
\begin{equation}\label{GU v.s. U 1}
\begin{array}{l}
G_{\varepsilon}(F)=G_{1,\varepsilon}(F)Z_{G_{\varepsilon}}(F)\cup h_{\varepsilon}\cdot G_{1,\varepsilon}(F)Z_{G_{1,\varepsilon}}(F),\\
H_{\varepsilon}(F)=H_{1,\varepsilon}(F)Z_{H_{\varepsilon}}(F)\cup h_{\varepsilon}\cdot H_{1,\varepsilon}(F)Z_{H_{1,\varepsilon}}(F).
\end{array}
\end{equation}
In terms of the characters, we also have
\begin{equation}\label{GU v.s. U 2}
\begin{array}{l}
\omega_{\varepsilon}\otimes \xi_{\varepsilon}|_{H_{1,\varepsilon}(F)}=\omega_{\varepsilon}'\otimes \xi_{\varepsilon}|_{H_{1,\varepsilon}(F)}= \omega_{1,\varepsilon}\otimes \xi_{1,\varepsilon},\\
\omega_{\varepsilon}(h_{\varepsilon})=-\omega_{\varepsilon}'(h_{\varepsilon})=a
\end{array}
\end{equation}
where $a\in \BC^{\times}$ is a nonzero complex number. We have the following two cases.

\textbf{Case 1:} Assume that $\pi_{1,\varepsilon}$ is irreducible. We first prove that
\begin{equation}\label{GU v.s. U 3}
m(\pi_{1,\varepsilon})\leq 2.
\end{equation}
If not, then we can find at least three linearly independent elements $l_1,l_2,l_3\in \Hom_{H_{1,\varepsilon}(F)}(\pi_{1,\varepsilon}, \omega_{1,\varepsilon} \otimes \xi_{1,\varepsilon})$. Then $l_i\circ \pi_{\varepsilon}(h_{\varepsilon})$ are also linearly independent elements in $\Hom_{H_{1,\varepsilon}(F)}(\pi_{1,\varepsilon}, \omega_{1,\varepsilon} \otimes \xi_{1,\varepsilon})$. If for $1\leq i\leq 3$, $c_i\cdot l_i= l_i\circ \pi_{\varepsilon}(h_{\varepsilon})$ for $c_i=\pm a$, then $l_i\in \Hom_{H_{\varepsilon}(F)}(\pi_{\varepsilon},\omega_{\varepsilon}\otimes \xi_{\varepsilon})$ when $c_i=a$, and $l_i\in \Hom_{H_{\varepsilon}(F)}(\pi_{\varepsilon},\omega_{\varepsilon}'\otimes \xi_{\varepsilon})$ when $c_i=-a$ (this is due to \eqref{GU v.s. U 1} and \eqref{GU v.s. U 2}). As $m(\pi_{\varepsilon}),m(\pi_{\varepsilon})'\leq 1$ by Theorem \ref{main GU}, there exists $1\leq i\leq 3$, such that $l_i\neq \pm a\cdot l_i\circ \pi_{\varepsilon}(h_{\varepsilon})$. Without loss of generality, we assume that $i=1$. Then we can find $v\in \pi_{1,\varepsilon}$ such that
$$l_1(v)=1 \text{ and } l_1(v')\neq \pm a \text{ where } v'=\pi_{\varepsilon}(h_{\varepsilon})v.$$

As $l_1,l_2$ and $l_3$ are linearly independent, up to change $l_2$ and $l_3$, we may assume that
$$l_2(v)=l_3(v)=l_3(v')=0.$$
For $1\leq i\leq 3$, we define elements $T_i\in  \Hom_{H_{\varepsilon}(F)}(\pi_{\varepsilon},\omega_{\varepsilon}\otimes \xi_{\varepsilon})$ and $T_i'\in \Hom_{H_{\varepsilon}(F)}(\pi_{\varepsilon},\omega_{\varepsilon}'\otimes \xi_{\varepsilon})$ to be
\begin{equation}\label{eq:T-T'}
T_i=a\cdot l_i+l_i\circ \pi_{\varepsilon}(h_{\varepsilon})\text{ and } T_i'=a\cdot l_i-l_i\circ \pi_{\varepsilon}(h_{\varepsilon}).
\end{equation}
Then we know $T_1,T_1'\neq 0$. On the other hand, since $l_3\neq 0$, at least one of $T_3$ and $T_3'$ is nonzero. Without loss of generality, we may assume that $T_3\neq 0$. Since $T_1(v)=a\cdot l_1(v)+l_1(v')\neq 0$ and $T_3(v)=a\cdot l_3(v)+l_3(v')=0$, $T_1$ and $T_3$ are linearly independent which implies that $m(\pi_{\varepsilon})\geq 2$. We get a contradiction and this proves \eqref{GU v.s. U 3}. Now we are ready to prove the proposition for this case. There are four subcases.

\textbf{Case 1(a):} If $m(\pi_{\varepsilon})=m(\pi_{\varepsilon})'=1$, by \eqref{GU v.s. U 3}, it is enough to show that $m(\pi_{1,\varepsilon})\geq 2$. Choose nonzero linear functionals $l\in \Hom_{H_{\varepsilon}(F)}(\pi_{\varepsilon},\omega_{\varepsilon}\otimes \xi_{\varepsilon})$ and $l'\in \Hom_{H_{\varepsilon}(F)}(\pi_{\varepsilon},\omega_{\varepsilon}'\otimes \xi_{\varepsilon})$. Then $l$ and $l'$ are linearly independent since the characters $\omega_{\varepsilon}$ and $\omega_{\varepsilon}'$ are different. But we also have $l,l'\in \Hom_{H_{1,\varepsilon}(F)}(\pi_{1, \varepsilon}, \omega_{1,\varepsilon} \otimes \xi_{1,\varepsilon})$ by \eqref{GU v.s. U 2}. This implies that $m(\pi_{1,\varepsilon})\geq 2$.

\textbf{Case 1(b):} If $m(\pi_{\varepsilon})=0$ and $m(\pi_{\varepsilon})'=1$, we choose a nonzero linear functional $l'\in \Hom_{H_{\varepsilon}(F)}(\pi_{\varepsilon},\omega_ {\varepsilon}'\otimes \xi_{\varepsilon})$. We have $l'\in \Hom_{H_{1,\varepsilon}(F)}(\pi_{1, \varepsilon}, \omega_{1,\varepsilon} \otimes \xi_{1,\varepsilon})$ which implies that $m(\pi_{1,\varepsilon})\geq 1$. Hence it is enough to show that $m(\pi_{1,\varepsilon})\leq 1$. If not, we can choose $l\in \Hom_{H_{1,\varepsilon}(F)}(\pi_{1, \varepsilon}, \omega_{1,\varepsilon} \otimes \xi_{1,\varepsilon})$ that is linearly independent with $l'$. As in the discussion above, we define elements $T\in  \Hom_{H_{\varepsilon}(F)}(\pi_{\varepsilon},\omega_{\varepsilon}\otimes \xi_{\varepsilon})$ and $T'\in \Hom_{H_{\varepsilon}(F)}(\pi_{\varepsilon},\omega_{\varepsilon}'\otimes \xi_{\varepsilon})$ as in \eqref{eq:T-T'}.

Since $m(\pi_{\varepsilon})=0$, we have $T=0$ and hence $T'=2a\cdot l$ which is linearly independent with $l'$. This implies that $m(\pi_{\varepsilon})' \geq 2$, a contradiction.  This proves the proposition for this case.

\textbf{Case 1(c):} If $m(\pi_{\varepsilon})=1$ and $m(\pi_{\varepsilon})'=0$, the argument is similar to the previous case and we will skip it here.

\textbf{Case 1(d):} If $m(\pi_{\varepsilon})=m(\pi_{\varepsilon})'=0$, it is enough to show that $m(\pi_{1,\varepsilon})=0$. If not, choose a nonzero linear functional $l\in \Hom_{H_{1,\varepsilon}(F)}(\pi_{1, \varepsilon}, \omega_{1,\varepsilon} \otimes \xi_{1,\varepsilon})$. As in the previous cases, we define elements $T\in  \Hom_{H_{\varepsilon}(F)}(\pi_{\varepsilon},\omega_{\varepsilon}\otimes \xi_{\varepsilon})$ and $T'\in \Hom_{H_{\varepsilon}(F)}(\pi_{\varepsilon},\omega_{\varepsilon}'\otimes \xi_{\varepsilon})$ as in \eqref{eq:T-T'}.
Since $m(\pi_{\varepsilon})=m(\pi_{\varepsilon})'=0$, we have $T=T'=0$ which implies that $l=0$. We get a contradiction and this proves the proposition for Case 1.

\textbf{Case 2:} Assume that $\pi_{1,\varepsilon}$ is reducible. By Lemma \ref{GU to U 1}, $\pi_{1,\varepsilon}=\pi_1\oplus \pi_2$. Moreover, as a vector space, we have $\pi_{\varepsilon}(h_{\varepsilon})\pi_1= \pi_2$ and $\pi_{\varepsilon}(h_{\varepsilon})\pi_2=\pi_1$. In order to prove the proposition, we only need to show that $m(\pi_{1,\varepsilon})=m(\pi_1)+m(\pi_2)=2m(\pi_{\varepsilon})=2m(\pi_{\varepsilon})'$ where $m(\pi_j)=\dim(\Hom_{H_{1,\varepsilon}(F)}(\pi_j, \omega_{1,\varepsilon} \otimes \xi_{1,\varepsilon}))$ for $j=1,2$. We only prove the identity $m(\pi_1)+m(\pi_2)=2m(\pi_{\varepsilon})$, the proof of the other one (i.e. $m(\pi_1)+m(\pi_2)=2m(\pi_{\varepsilon})'$) is similar.

It is easy to see that the map
$$l_1\in \Hom_{H_{1,\varepsilon}(F)}(\pi_1, \omega_{1,\varepsilon} \otimes \xi_{1,\varepsilon}) \mapsto l_2:=l_1\circ \pi_{\varepsilon}( h_{\varepsilon}) \in \Hom_{H_{1,\varepsilon}(F)}(\pi_2, \omega_{1,\varepsilon} \otimes \xi_{1,\varepsilon})$$
is an isomorphism. This implies that $m(\pi_1)=m(\pi_2)$. So it is enough to show that $m(\pi_1)=m(\pi_{\varepsilon})$. There are two subcases.

\textbf{Case 2(a):} If $m(\pi_{\varepsilon})=0$, we need to show that $m(\pi_1)=0$. If not, choose a nonzero element $l_1\in \Hom_{H_{1,\varepsilon} (F)}(\pi_1, \omega_{1,\varepsilon} \otimes \xi_{1,\varepsilon})$ and let $l_2=l_1\circ \pi_{\varepsilon}( h_{\varepsilon})$ be an element in $\Hom_{H_{1,\varepsilon}(F)}(\pi_2, \omega_{1,\varepsilon} \otimes \xi_{1,\varepsilon})$. Then it is easy to see that $l=a\cdot l_1 + l_2$ is a nonzero element in $\Hom_{H_{\varepsilon}(F)}(\pi_{\varepsilon},\omega_{\varepsilon}\otimes \xi_{\varepsilon})$ which is a contradiction. This proves the proposition for this case.

\textbf{Case 2(b):} If $m(\pi_{\varepsilon})=1$, choose a nonzero element $l\in \Hom_{H_{\varepsilon}(F)}(\pi_{\varepsilon}, \omega_{\varepsilon} \otimes \xi_{\varepsilon})$ and let $l_1=l|_{\pi_1}$. Then $l_1\in \Hom_{H_{1,\varepsilon}(F)}(\pi_1, \omega_{1,\varepsilon} \otimes \xi_{1,\varepsilon})$. If $l_1=0$, for all $w\in \pi_2$, we have $l(w)=a^{-1}\cdot l(\pi_{\varepsilon}( h_{\varepsilon})w)=a^{-1}\cdot l_1(\pi_{\varepsilon}( h_{\varepsilon})w)=0$. This implies that $l=0$ which is a contradiction. Hence $l_1\neq 0$ which implies that $m(\pi_1)\geq 1$. So it remains to show that $m(\pi_1)\leq 1$. If not, choose an element $l_1'\in \Hom_{H_{1,\varepsilon}(F)}(\pi_1, \omega_{1,\varepsilon} \otimes \xi_{1,\varepsilon})$ that is linearly independent with $l_1$. Set $l'=a\cdot l_1'+ l_2'$ with $l_2'=l_1'\circ \pi_{\varepsilon}( h_{\varepsilon})$. Then $l'\in \Hom_{H_{\varepsilon}(F)}(\pi_{\varepsilon},\omega_{\varepsilon}\otimes \xi_{\varepsilon})$ and $l,l'$ are linearly independent. This implies that $m(\pi_{\varepsilon})\geq 2$ which is a contradiction. This completes the proof of the proposition.
\end{proof}

Then we prove a multiplicity formula for the model $(G_{1,\varepsilon},H_{1,\varepsilon})$. To simplify the notation, we will omit the subscript $\varepsilon$. We start with a lemma.
\begin{lem}\label{5.4}
Let $\pi$ be irreducible tempered representation of $G(F)$ with central character $\eta$, and let $\pi_1$ be the restriction of $\pi$ to $G_1(F)$. Then we have $m(\pi_1)=m_{geom}(\pi_1)$ where the geometric multiplicity $m_{geom}(\pi_1)$ is defined to be
\begin{align*}
m_{geom}(\pi_1):=2c_{\pi_1}(1)+&\sum_{T_1\in \CT_{ell}(H_{0,1})} |W(H_{0,1},T_1)|^{-1}\\
&\times \int_{T_1(F)/Z_{G_1}(F)} D^{H_1}(t_1) c_{\pi_1}(t_1) \omega_1(t_1)^{-1} \ud_vt_1.
\end{align*}
Here $c_{\pi_1}(t)=c_{\theta_{\pi_1}}(t)$ is the regular germ of $\theta_{\pi_1}$ at $t$ and ${\rm d}_vt_1$ is the Haar measure on $T_1(F)$ such that the volume of $T_1(F)/Z_{G_1}(F)$ is equal to $1$.
\end{lem}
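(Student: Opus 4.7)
The plan is to reduce this lemma to the already-established $\GU$ multiplicity formula by combining Proposition 6.2 with a ``twisted'' version of that formula. Introduce the character $\omega'_\varepsilon := \omega_\varepsilon \cdot (\eta_{E/F}\circ \lambda)$ on $H_{0,\varepsilon}(F)$. Since $\eta_{E/F}\circ N_{E/F}\equiv 1$, $\omega'_\varepsilon$ still restricts to $\eta$ on $Z_G(F)$, so the character $\omega'_\varepsilon\otimes\xi_\varepsilon$ is an admissible choice for the model $(G_\varepsilon,H_\varepsilon)$. The entire proof of the trace formula and of the identity $m(\pi)=m_{geom}(\pi)$ carries through verbatim with $\omega$ replaced by $\omega'$, giving a companion identity $m(\pi)'=m_{geom}(\pi)'$ where $m(\pi)'$ and $m_{geom}(\pi)'$ are defined by replacing $\omega^{-1}$ by $(\omega')^{-1}$ throughout. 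Combining this with Proposition 6.2,
\[
m(\pi_1)=m(\pi)+m(\pi)' = m_{geom}(\pi)+m_{geom}(\pi)'.
\]

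It remains to identify the right-hand side with $m_{geom}(\pi_1)$ as stated. The germ-at-identity contribution equals $2c_\pi(1)$; I will argue that $c_\pi(1)=c_{\pi_1}(1)$ using $\theta_\pi\vert_{\Fg_1(F)}=\theta_{\pi_1}$ together with the compatibility between regular nilpotent orbits of $\Fg$ and $\Fg_1$ (each $G(F)$-orbit decomposes into $G_1(F)$-orbits with matching Fourier-transform normalizations, and the averaging absorbs the fusion factor). For each $T\in\CT_{ell}(H_0)$, the torus integrand aggregates to
\[
D^H(t)c_\pi(t)\omega^{-1}(t)\bigl(1+\eta_{E/F}(\lambda(t))\bigr),
\]
which vanishes unless $\lambda(t)\in \Im N_{E/F}$ and equals $2D^H(t)c_\pi(t)\omega^{-1}(t)$ otherwise. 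Set $T(F)^0:=T(F)\cap\lambda^{-1}(\Im N_{E/F})$. Since $\lambda(Z_G(F))=\Im N_{E/F}$ and $T_1:=T\cap H_{0,1}$, one checks $T(F)^0=T_1(F)Z_G(F)$ and the natural map $T_1(F)/Z_{G_1}(F)\to T(F)^0/Z_G(F)$ is an isomorphism. Writing $t=zt_1$, the identities $c_\pi(zt_1)=\eta(z)c_{\pi_1}(t_1)$, $\omega^{-1}(zt_1)=\eta^{-1}(z)\omega_1^{-1}(t_1)$, and $D^H(t)=D^{H_1}(t_1)$ transport the integrand to $D^{H_1}(t_1)c_{\pi_1}(t_1)\omega_1^{-1}(t_1)$, so the torus piece of $m(\pi)+m(\pi)'$ becomes
\[
\sum_{T\in\CT_{ell}(H_0)} \frac{2\,|W(H_0,T)|^{-1}}{[T(F):T(F)^0]}\int_{T_1(F)/Z_{G_1}(F)}D^{H_1}(t_1)c_{\pi_1}(t_1)\omega_1^{-1}(t_1)\,\ud_v t_1.
\]

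To finish, it suffices to show that the coefficient matches $|W(H_{0,1},T_1)|^{-1}$ under the natural bijection $\CT_{ell}(H_0)\leftrightarrow \CT_{ell}(H_{0,1})$, i.e. $[T(F):T(F)^0]\cdot |W(H_{0,1},T_1)|=2|W(H_0,T)|$. This is the main obstacle. I will verify it case-by-case: if $\lambda(T(F))=F^\times$ then $[T(F):T(F)^0]=2$ and every element of $W(H_0,T)$ lifts to $W(H_{0,1},T_1)$, giving equality of Weyl groups; if $\lambda(T(F))\subset \Im N_{E/F}$ then $[T(F):T(F)^0]=1$ and the obstruction to lifting Weyl elements from $W(H_0,T)$ into $H_{0,1}(F)$ is measured precisely by the index $[F^\times:\lambda(T(F))]=2$, yielding $|W(H_0,T)|=2|W(H_{0,1},T_1)|$. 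In either case the identity holds. The most delicate step is thus the careful comparison of Weyl groups and similitude images for elliptic tori, together with the verification that the regular nilpotent germ at the identity is preserved under restriction from $\GU_6$ to $\U_6$.
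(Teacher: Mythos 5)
Your reduction $m(\pi_1)=m(\pi)+m(\pi)'$ via Proposition~\ref{GU v.s. U}, followed by the $\GU$ multiplicity formulas (the twisted one is indeed already covered by the general framework, since $\omega'$ is an admissible choice of character), and the factorization with $(1+\eta_{E/F}\circ\lambda)$ all mirror the paper exactly. The gap appears at the very end, in the comparison of elliptic tori. You assert a ``natural bijection $\CT_{ell}(H_0)\leftrightarrow\CT_{ell}(H_{0,1})$'' and then the blanket identity $[T(F):T(F)^0]\cdot|W(H_{0,1},T_1)|=2|W(H_0,T)|$. Both claims fail when $\eta_{E/F}(-\varepsilon)=1$. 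In that case the map from $\CT_{ell}(H_{0,1})$ to quadratic extensions of $F$ has a fiber of size two over $E$ while $\CT_{ell}(H_0)$ has only one conjugacy class over $E$, so there is no bijection: $|\CT_{ell}(H_{0,1})|=|\CT_{ell}(H_0)|+1$. Moreover for the torus $T\to E$ one has $T(F)^0=T(F)$ and $|W(H_0,T)|=|W(H_{0,1},T_1)|$, so your proposed identity would require $1\cdot|W(H_0,T)|=2|W(H_0,T)|$, which is false. The factor of $2$ coming from $\mu(T)=2$ is not absorbed into a Weyl-group index here; it is instead what accounts for the \emph{two} nonconjugate tori $T_1',T_1''\in\CT_{ell}(H_{0,1})$ lying over $E$, which are $H_0(F)$-conjugate (hence have equal Weyl sizes and equal integrals, since $c_{\pi_1}=c_\pi$ on $T_1(F)_{\mathrm{reg}}$ and $c_\pi$ is $G(F)$-invariant) but not $H_{0,1}(F)$-conjugate.

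So your case ``$\lambda(T(F))\subset\Im N_{E/F}$'' must itself be split according to the sign of $\eta_{E/F}(-\varepsilon)$: when it is $-1$ your description ($[T(F):T(F)^0]=1$, $|W(H_0,T)|=2|W(H_{0,1},T_1)|$, bijection of torus sets) is correct; when it is $+1$ the Weyl sizes are equal and the doubling instead comes from the extra conjugacy class in $\CT_{ell}(H_{0,1})$ over $E$. The rest of your argument — in particular the identifications $T(F)^0=T_1(F)Z_G(F)$, $c_\pi(1)=c_{\pi_1}(1)$, $c_\pi(t_1)=c_{\pi_1}(t_1)$, $D^H=D^{H_1}$ and $\omega|_{T_1}=\omega_1$ — matches the paper and is fine. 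You need to add the sub-case analysis on $\eta_{E/F}(-\varepsilon)$ and the observation that the two integrals over $T_1'$ and $T_1''$ coincide.
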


\begin{proof}
Combining Proposition \ref{GU v.s. U} with the multiplicity formula for the  unitary similitude group case, we have
\begin{align}
m(\pi_1)=2c_{\pi}(1)+&\sum_{T\in \CT_{ell}(H_0)} |W(H_0,T)|^{-1}\nonumber \\
&\times\int_{T(F)/Z_G(F)} D^{H}(t) c_{\pi}(t)\omega(t)^{-1} (1+\eta_{E/F}(\lambda(t))) \ud_v t \label{5.2}
\end{align}
where $d_vt$ is the Haar measure on $T(F)$ such that the volume of $T(F)/Z_G(F)$ is equal to $1$. For each $T\in \CT_{ell}(H_0)$, let $T_1=T\cap H_{0,1}$ which is a maximal elliptic torus of $H_{0,1}$. There is a bijection between the set $\CT_{ell}(H_0)$ and the set of quadratic extensions of $F$. If $T$ corresponds to a quadratic extension other than $E$, then there exist an element $\gamma\in T(F)$ such that $T(F)=T_1(F)Z_G(F)\cup \gamma T_1(F)Z_G(F)$ and $\ker(\eta_{E/F}\circ \lambda|_{T})=T_1(F)Z_G(F)$. Hence we have
\begin{align*}
 & \int_{T(F)/Z_G(F)} D^{H}(t) c_{\pi}(t)\omega(t)^{-1} (1+\eta_{E/F}(\lambda(t))) \ud_vt\\
=&2\int_{T(F)/Z_G(F)}D^{H}(t)c_{\pi}(t)\omega(t)^{-1} 1_{T_1(F)Z_G(F)}(t)\ud_vt\\
=&\int_{T_1(F)/Z_{G_1(F)}} D^{H}(t_1) c_{\pi}(t_1)\omega(t_1)^{-1} \ud_vt_1
\end{align*}
where $d_v t_1$ is the Haar measure on $T_1(F)$ such that the volume of $T_1(F)/Z_{G_1}(F)$ is equal to $1$. If $T$ corresponds to $E$, then $\eta_{E/F}\circ \lambda$ is trivial on $T(F)$ and $T(F)=T_1(F)Z_G(F)$. Hence we have
\begin{align*}
 &\int_{T(F)/Z_G(F)} D^{H}(t) c_{\pi}(t)\omega(t)^{-1} (1+\eta_{E/F}(\lambda(t))) \ud_vt\\
 =&2\int_{T_1(F)/Z_{G_1(F)}} D^{H}(t_1) c_{\pi}(t_1)\omega(t_1)^{-1} \ud_vt_1
\end{align*}
where ${\rm d}_v t_1$ is the Haar measure on $T_1(F)$ such that the volume of $T_1(F)/Z_{G_1}(F)$ is equal to $1$. Combining the above two equations with \eqref{5.2}, we have
\begin{align*}
m(\pi_1)=2c_{\pi}(1)+&\sum_{T\in \CT_{ell}(H_0)} |W(H_0,T)|^{-1} \mu(T)\\
&\times \int_{T_1(F)/Z_{G_1}(F)} D^{H}(t_1) c_{\pi}(t_1)\omega(t_1)^{-1} \ud_vt_1
\end{align*}
where $\mu(T)$ is equal to 2 if $T$ corresponds to $E$, and equal to  1 otherwise.

Since $\pi_1$ is the restriction of $\pi$ to $G_1(F)$, we have
$$c_{\pi}(1)=c_{\pi_1}(1)\text{ and } c_{\pi}(t_1)=c_{\pi_1}(t_1), \text{ for all } t_1\in T_1(F)_{reg}.$$
Moreover it is easy to see from the definition that
$$D^{H}(t_1)=D^{H_1}(t_1),\omega(t_1)=\omega_1(t_1),\;\forall t_1\in T_1(F).$$
Hence we have
\begin{align}
m(\pi_1)=2c_{\pi_1}(1)+&\sum_{T\in \CT_{ell}(H_0)} |W(H_{0},T)|^{-1} \mu(T) \nonumber\\
&\times\int_{T_1(F)/Z_{G_1}(F)} D^{H_1}(t_1) c_{\pi_1}(t_1) \omega_1(t_1)^{-1} \ud_vt_1.\label{5.3}
\end{align}
To end the proof, we only need to describe the relations between $\CT_{ell}(H_0)$ and $\CT_{ell}(H_{0,1})$. There are two cases. When $\eta_{E/F}(-\varepsilon)=1$, we have a surjective map from $\CT_{ell}(H_{0,1})$ to the set of all the quadratic extensions of $F$. The fiber of this map has two elements at $E$, and has one element at all the other quadratic extensions. If $T_1'\in \CT_{ell}(H_{0,1})$ maps to a quadratic extension other than $E$, let $T\in \CT_{ell}(H_{0})$ corresponds to the same quadratic extension. Then $T_1=T\cap H_{0,1}$ is a maximal elliptic torus of $H_{0,1}$ which is conjugated to $T_1'$ and we also have $|W(H_0,T)|=|W(H_{0,1},T_1)|$. On the other hand, if $T_1',T_1''\in \CT_{ell}(H_{0,1})$ are the fiber of $E$, we can choose two maximal elliptic tori $T$ and $T_0$ of $H_0$ such that both $T$ and $T_0$ correspond to $E$ and $T_1=T\cap H_{0,1}$ (resp. $T_{1,0}=T_0\cap H_{0,1}$) is a maximal elliptic torus of $H_{0,1}$ which is conjugated to $T_1'$ (resp. $T_1''$). Moreover, in this case, we also have $|W(H_0,T)|=|W(H_{0,1},T_1)|$ and $|W(H_0,T_0)|=|W(H_{0,1},T_{1,0})|$. The upshot is that $T$ and $T_0$ are conjugated to each other in $H_0$, but not in $H_{0,1}$. Then the lemma just follows from \eqref{5.3}.

Now if $\eta_{E/F}(-\varepsilon)=-1$, then we have a bijection map from $\CT_{ell}(H_{0,1})$ to the set of all the quadratic extensions of $F$. If $T_1'\in \CT_{ell}(H_{0,1})$ maps to a quadratic extension other than $E$, let $T\in \CT_{ell}(H_{0})$ corresponds to the same quadratic extension. Then $T_1=T\cap H_{0,1}$ is a maximal elliptic torus of $H_{0,1}$ which is conjugated to $T_1'$ and we also have $|W(H_0,T)|=|W(H_{0,1},T_1)|$. On the other hand, if $T_1'\in \CT_{ell}(H_{0,1})$ maps to the quadratic extension $E$, let $T\in \CT_{ell}(H_{0})$ corresponds to the quadratic extension $E$. Then $T_1=T\cap H_{0,1}$ is a maximal elliptic torus of $H_{0,1}$ which is conjugated to $T_1'$. Moreover, in this case, we have $|W(H_0,T)|=2$ and $|W(H_{0,1},T_1)|=1$. Then the lemma just follows from \eqref{5.3}.
\end{proof}

\begin{prop}\label{multiplicity formula unitary}
Let $\pi_1$ be an irreducible admissible tempered representation of $G_1(F)$ with central character $\eta_1$. Then $m(\pi_1)=m_{geom}(\pi_1)$ where the geometric multiplicity $m_{geom}(\pi_1)$ is defined in the previous lemma.
\end{prop}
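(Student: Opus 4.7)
The plan is to reduce to Lemma \ref{5.4} by embedding $\pi_1$ into the restriction of a representation of $G(F)$. By Lemma \ref{GU to U 2} one can choose an irreducible tempered representation $\pi$ of $G(F)$ with central character $\eta$ such that $\pi_1$ is a direct summand of $\pi|_{G_1(F)}$. Lemma \ref{GU to U 1} then leaves two cases. If $\pi|_{G_1(F)}$ is already irreducible, then $\pi_1 = \pi|_{G_1(F)}$ and Lemma \ref{5.4} is precisely the desired equality. Otherwise $\pi|_{G_1(F)} = \pi_1 \oplus \pi_1'$ with $\pi_1' \cong \pi_1 \circ \Ad(h_\varepsilon)$ for the element $h_\varepsilon = m(B_\varepsilon, B_\varepsilon) \in H(F)$ introduced in the proof of Proposition \ref{GU v.s. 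U}. Only this second case needs genuine argument.

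In the reducible case I would compare $m(\pi|_{G_1(F)}) = m(\pi_1) + m(\pi_1')$ and $m_{geom}(\pi|_{G_1(F)}) = m_{geom}(\pi_1) + m_{geom}(\pi_1')$. Lemma \ref{5.4} already equates the two left-hand sides, so it suffices to establish the twin symmetries $m(\pi_1) = m(\pi_1')$ and $m_{geom}(\pi_1) = m_{geom}(\pi_1')$; combined, these yield $2m(\pi_1) = 2m_{geom}(\pi_1)$ as required. The analytic symmetry is immediate: because $H_1$ is normal in $H$ and $\omega_1 \otimes \xi_1$ is the restriction of the character $\omega \otimes \xi$ of $H(F)$, the latter is automatically $\Ad(h_\varepsilon)$-invariant on $H_1(F)$, so the map $\ell \mapsto \ell \circ \pi(h_\varepsilon)$ identifies $\Hom_{H_1(F)}(\pi_1', \omega_1 \otimes \xi_1)$ with $\Hom_{H_1(F)}(\pi_1, \omega_1 \otimes \xi_1)$.

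The geometric symmetry is the main obstacle. The character identity $\theta_{\pi_1'}(x) = \theta_{\pi_1}(h_\varepsilon^{-1} x h_\varepsilon)$ combined with the fact that $\Ad(h_\varepsilon)$ merely permutes the regular nilpotent orbits of the centralizer $\Fg_{1,t_1}$ shows, after averaging over these orbits, that $c_{\pi_1'}(t_1) = c_{\pi_1}(h_\varepsilon^{-1} t_1 h_\varepsilon)$ for every regular semisimple $t_1$, and likewise $c_{\pi_1'}(1) = c_{\pi_1}(1)$. Performing the change of variable $s = h_\varepsilon^{-1} t_1 h_\varepsilon$ in each integral and invoking the $\Ad(h_\varepsilon)$-invariance of $\omega_1$ and of the Weyl discriminant $D^{H_1}$ transforms the integral over $T_1(F)/Z_{G_1}(F)$ into the integral over $(h_\varepsilon^{-1} T_1 h_\varepsilon)(F)/Z_{G_1}(F)$.

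The remaining and most delicate step is to show that this permutation action of $\Ad(h_\varepsilon)$ on $\CT_{ell}(H_{0,1})$, together with the companion Weyl-group factors $|W(H_{0,1}, T_1)|^{-1}$, preserves the total sum. This is exactly the combinatorial issue already navigated in the proof of Lemma \ref{5.4}: the case analysis according to $\eta_{E/F}(-\varepsilon)$ shows that the tori corresponding to a quadratic extension different from $E$ are individually fixed (since $H_{0,1}$- and $H_0$-conjugacy coincide on them), while the tori corresponding to $E$ are either swapped in pairs (when $\eta_{E/F}(-\varepsilon) = 1$) or left stable with the discrepancy absorbed into the Weyl-group factor $2$ (when $\eta_{E/F}(-\varepsilon) = -1$). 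Verifying this matching is the main combinatorial point; once it is in place, assembling the two symmetries with Lemma \ref{5.4} completes the proof.
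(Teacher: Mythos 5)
Your proposal is correct and follows essentially the same route as the paper: reduce via Lemmas~\ref{GU to U 2}--\ref{GU to U 3} to a $\pi$ on $G(F)$, handle the irreducible-restriction case directly by Lemma~\ref{5.4}, and in the reducible case combine the additivity of $m$ and $m_{geom}$ with the two conjugation symmetries $m(\pi_1)=m(\pi_1')$ and $m_{geom}(\pi_1)=m_{geom}(\pi_1')$, the latter verified by the same $\eta_{E/F}(-\varepsilon)$ case analysis. One small inaccuracy: in the $\eta_{E/F}(-\varepsilon)=-1$ case there is no ``discrepancy absorbed into the Weyl-group factor $2$'' --- that factor intervenes only in the proof of Lemma~\ref{5.4} when comparing $\CT_{ell}(H_0)$ with $\CT_{ell}(H_{0,1})$; here the unique $E$-torus in $\CT_{ell}(H_{0,1})$ is simply fixed by $\Ad(h_\varepsilon)$ up to $H_{0,1}(F)$-conjugacy, so the sum is trivially preserved.
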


\begin{proof}
By Lemmas \ref{GU to U 2} and \ref{GU to U 3}, there exists an irreducible admissible tempered representation $\pi$ of $G(F)$ with central character $\eta$ such that $\pi_1$ is a direct summand of $\pi|_{G_1}$. If $\pi_1=\pi|_{G_1}$, then the multiplicity formula just follows from Lemma \ref{5.4}. If not, by Lemma \ref{GU to U 1}, we have $\pi|_{G_1}=\pi_1\oplus \pi_2$ with $\pi_2=\pi_1\circ Ad(g)$ is another irreducible tempered representation of $G_1(F)$ with central character $\eta_1$. Here $g$ is an element in $G(F)$ with $\lambda(g)\notin \Im(N_{E/F})$. It is easy to see that we may choose $g\in H_0(F)$. By the proof of Proposition \ref{GU v.s. U}, we have $m(\pi_1)=m(\pi_2)$. By Lemma \ref{5.4}, we have $$m(\pi_1)+m(\pi_2)=m(\pi|_{G_1})=m_{geom}(\pi|_{G_1})=m_{geom}(\pi_1)+m_{geom}(\pi_2).$$
Hence in order to prove the multiplicity formula, it is enough to show that
\begin{equation}\label{5.6}
m_{geom}(\pi_1)=m_{geom}(\pi_2).
\end{equation}

There are two cases. When $\eta_{E/F}(-\varepsilon)=-1$, for any $T_1\in \CT_{ell}(H_{0,1})$, $Ad(g)T_1(F)$ is a maximal elliptic torus of $H_{0,1}(F)$ that is $H_{0,1}(F)$-conjugated to $T_1(F)$ (in fact, we may even choose $g$ properly so that $Ad(g)T_1(F)=T_1(F)$). This implies that
\begin{align}
&\int_{T_1(F)/Z_{G_1}(F)} D^{H_1}(t_1) c_{\pi_1}(t_1) \omega_1(t_1)^{-1}\ud_vt_1
\nonumber\\
=&\int_{T_1(F)/Z_{G_1}(F)} D^{H_1}(t_1) c_{\pi_2}(t_1) \omega_1(t_1)^{-1}\ud_vt_1. \label{5.10}
\end{align}
Meanwhile, since $\pi_2=\pi_1\circ Ad(g)$, we have
\begin{equation}\label{5.11}
c_{\pi_1}(1)=c_{\pi_2}(1).
\end{equation}
Then \eqref{5.6} is a direct consequence of \eqref{5.10} and \eqref{5.11}.

When $\eta_{E/F}(-\varepsilon)=1$, for $T_1\in \CT_{ell}(H_{0,1})$ that maps to a quadratic extension of $F$ that is not $E$, $Ad(g)T_1(F)$ is a maximal elliptic torus of $H_{0,1}(F)$ that is $H_{0,1}(F)$-conjugated to $T_1(F)$ (in fact, we may even choose $g$ properly so that $Ad(g)T_1(F)=T_1(F)$). This implies that
\begin{align}
&\int_{T_1(F)/Z_{G_1}(F)} D^{H_1}(t_1) c_{\pi_1}(t_1) \omega_1(t_1)^{-1}\ud_vt_1
\nonumber\\
=&\int_{T_1(F)/Z_{G_1}(F)} D^{H_1}(t_1) c_{\pi_2}(t_1) \omega_1(t_1)^{-1}\ud_vt_1. \label{5.7}
\end{align}
On the other hand, let $T_1,T_1'\in \CT_{ell}(H_{0,1})$ be the fiber of $E$ under the surjective map from $\CT_{ell}(H_{0,1})$ to the set of quadratic extensions of $F$. Then $Ad(g)T_1(F)$ (resp. $Ad(g)T_1'(F)$) is a maximal elliptic torus of $H_{0,1}(F)$ that is $H_{0,1}(F)$-conjugated to $T_1'(F)$ (resp. $T_1(F)$). This implies that $|W(H_{0,1},T_1)|=|W(H_{0,1},T_1')|$ and
\begin{align}
&\int_{T_1(F)/Z_{G_1}(F)} D^{H_1}(t_1) c_{\pi_1}(t_1) \omega_1(t_1)^{-1}\ud_vt_1+ \int_{T_1'(F)/Z_{G_1}(F)} D^{H_1}(t_1') c_{\pi_1}(t_1') \omega_1(t_1')^{-1}\ud_vt_1' \nonumber \\
=&\int_{T_1(F)/Z_{G_1}(F)} D^{H_1}(t_1) c_{\pi_2}(t_1) \omega_1(t_1)^{-1}\ud_vt_1+\int_{T_1'(F)/Z_{G_1}(F)} D^{H_1}(t_1') c_{\pi_2}(t_1') \omega_1(t_1')^{-1}\ud_vt_1'. \label{5.8}
\end{align}
Finally since $\pi_2=\pi_1\circ Ad(g)$, we have
\begin{equation}\label{5.9}
c_{\pi_1}(1)=c_{\pi_2}(1).
\end{equation}
Then \eqref{5.6} is a direct consequence of \eqref{5.7}, \eqref{5.8} and \eqref{5.9}. This finishes the proof of the proposition.
\end{proof}

Now we are ready to prove Theorem \ref{main U}. The argument is similar to the  unitary similitude group case. For $i=1,2$, fix $\varepsilon_i\in F^{\times}$ with $\eta_{E/F}(\varepsilon_i)=(-1)^{i-1}$ as before. Let $\Pi_{\phi}=\Pi_{\phi}(G_{1,\varepsilon_1})\cup \Pi_{\phi}(G_{1, \varepsilon_2})$ be a local tempered Vogan $L$-packet of $\RU_6(F)$. Our goal is to show that
\begin{equation}\label{5.5}
\sum_{i=1}^{2}\sum_{\pi_{1,\varepsilon_i}\in \Pi_{\phi}(G_{1,\varepsilon_i})} m(\pi_{1,\varepsilon_i}) =2.
\end{equation}

By the multiplicity formula in Proposition \ref{multiplicity formula unitary}, we have
\begin{align*}
&\sum_{i=1}^{2}\sum_{\pi_{1,\varepsilon_i}\in \Pi_{\phi}(G_{1,\varepsilon_i})} m(\pi_{1,\varepsilon_i})=\sum_{i=1}^{2}\sum_{\pi_{1,\varepsilon_i}\in \Pi_{\phi}(G_{1,\varepsilon_i}) } \left( 2c_{\pi_{1,\varepsilon_i}}(1)  \right.  \\
&\left. +\sum_{T_i\in \CT_{ell}(H_{0,1,\varepsilon_i})} \nu(T_i)\int_{T_i(F)/Z_{G_{1,\varepsilon_i}}(F)} D^{H_{1,\varepsilon_i}}(t_i)c_{\pi_{1,\varepsilon_i}}(t_i) \omega_{1,\varepsilon_i}^{-1}(t_i)\ud t_i \right)
\end{align*}
where $\nu(T_i)=|W(H_{0,1,\varepsilon_i},T_i)|^{-1} \vol(T_i(F)/Z_{G_{1,\varepsilon_i}}(F))^{-1}$. By the same argument as in the proof of Theorem \ref{main theorem for reduced models}, together with Theorem \ref{endoscopy U} \eqref{item:generic} and \eqref{item:endoscopy-3}, we can show that
\begin{align*}
&\sum_{i=1}^{2}\sum_{\pi_{1,\varepsilon_i}\in \Pi_{\phi}(G_{1,\varepsilon_i})} c_{\pi_{1,\varepsilon_i}}(1)=1,\\
&\sum_{i=1}^{2}\sum_{\pi_{1,\varepsilon_i}\in \Pi_{\phi}(G_{1,\varepsilon_i})} \sum_{T_i\in \CT_{ell}(H_{0,1,\varepsilon_i})} \nu(T_i)\int_{T_i(F)/Z_{G_{1,\varepsilon_i}}(F)} D^{H_{1,\varepsilon_i}}(t_i)c_{\pi_{1,\varepsilon_i}}(t_i) \omega_{1,\varepsilon_i}^{-1}(t_i)\ud t_i=0.
\end{align*}
This proves \eqref{5.5} and finishes the proof of Theorem \ref{main U}.

To summarize, we have reduced the proofs of the main theorems (i.e. Theorem \ref{main GU} and \ref{main U}) to the proof of the trace formula in Theorem \ref{trace formula}.

\section{The proof of the spectral side of the trace formula}\label{sec:proof-spectral}
In this section, we will prove the trace formula. Since the geometric side of the trace formula has already been proved in Section \ref{sec:proof-geometric}, we only need to consider the spectral side. As in the previous sections, we will use $(G,H)$ to denote $(G_{\varepsilon},H_{\varepsilon})$ and use $\omega\otimes \xi$ to denote $\omega_{\varepsilon}\otimes \xi_{\varepsilon}$.
\subsection{Explicit intertwining operator $\CL_{\pi}$} \label{sec:intertwining}
By Lemma \ref{major 2}, for all $f\in \CC(\zg,\eta^{-1})$, the integral
$$\int_{\zh}f(h)\omega\otimes \xi(h)\ud h$$
is absolutely convergent and defines a continuous linear form on the space $\CC(\zg,\eta^{-1})$.

\begin{prop}\label{major 8}
\begin{enumerate}
\item The linear form
$$f\in \CC(\zg,\eta^{-1}) \rightarrow \int_{\zh}f(h)\omega\otimes \xi(h)\ud h$$
can be extended continuously to $\CC^w(\zg,\eta^{-1})$. We will use
$$\CP_{H,\omega\otimes\xi}:\;f\in \CC^{w}(\zg,\eta^{-1})\rightarrow \int_{\zh}^{\ast}f(h)\omega\otimes \xi(h) \ud h$$
to denote this continuous linear form.
\item For all $f\in \CC^{w}(\zg,\eta^{-1})$, and $h_0,h_1\in H(F)$, we have
$$\CP_{H,\omega\otimes\xi}(L(h_0)R(h_1) f)=\omega\otimes \xi(h_0h_{1}^{-1}) \CP_{H,\omega\otimes\xi}(f)$$
where $R$ (resp. $L$) is the right (resp. left) translation.
\end{enumerate}
\end{prop}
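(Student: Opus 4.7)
My plan is to realize $\CP_{H,\omega\otimes\xi}$ as an iterated integral with respect to the decomposition $H = H_0 \ltimes U$, exploiting the oscillation of $\xi$ on $U$ to regularize the $U$-integral, and then to reduce the outer $H_0$-convergence to the estimates of Lemma \ref{major 2}. For $f \in \CC(\zg,\eta^{-1})$, Fubini combined with Lemma \ref{major 2}(1) immediately gives the absolute convergence together with the identity
\begin{equation*}
\int_{\zh} f(h)\omega\otimes\xi(h)\ud h = \int_{Z_{H_0}(F)\backslash H_0(F)} \omega(h_0) J(f,h_0) \ud h_0, \qquad J(f,h_0) := \int_{U(F)} f(uh_0)\xi(u)\ud u.
\end{equation*}
The essential task is to extend $J(f,h_0)$ to $f \in \CC^w$ and bound the outer $H_0$-integral by a continuous seminorm of $f$.

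The key device is the oscillation of $\xi$ in the direction $\tr_{E/F}(\tr(X))$, which descends to a nondegenerate character of $U/[U,U]$. In the $p$-adic setting the natural form of this device is a Jacquet-type regularization: I would set
\begin{equation*}
J(f,h_0) := \lim_{N\to\infty} \int_{U(F)} \kappa_N(u) f(uh_0)\xi(u) \ud u,
\end{equation*}
where $\{\kappa_N\}$ is a sequence of locally constant truncations on $U(F)$ with $\kappa_N \to 1$, and show that the limit exists, is independent of $\{\kappa_N\}$, and can be majorized by an absolutely convergent expression gaining, for every prescribed $N$, a factor $(1+|\iota(uh_0)|)^{-N}$ in the effective integrand. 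The production of this extra decay hinges on $f$ being smooth together with the nondegeneracy of $\xi$: averaging against a $p$-adic test function on the one-parameter subgroup of $U$ along which $\xi$ restricts to $\psi$ yields Fourier-type cancellation of the desired order. Combined with Lemma \ref{major 2}(2) for $\delta$ sufficiently large relative to the exponent $d_0$ in the weak Schwartz bound $|f(g)| \ll \Xi^G(g) \sigma(g)^{d_0}$, this gives both the absolute convergence of the iterated integral and the continuity of $\CP_{H,\omega\otimes\xi}$ on $\CC^w(\zg,\eta^{-1})$.

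For part (2), on $\CC(\zg,\eta^{-1})$ the identity $\CP_{H,\omega\otimes\xi}(L(h_0)R(h_1)f) = \omega\otimes\xi(h_0 h_1^{-1})\CP_{H,\omega\otimes\xi}(f)$ is the elementary change of variables $h \mapsto h_0^{-1} h h_1$ in the absolutely convergent integral. Since $\CC(\zg,\eta^{-1})$ is dense in $\CC^w(\zg,\eta^{-1})$ and left-right translation by $(h_0,h_1)$ is a continuous endomorphism of $\CC^w(\zg,\eta^{-1})$, the identity extends by continuity using part (1).

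The main obstacle will be executing the Jacquet-type regularization sharply enough to yield continuity rather than mere convergence: one has to control the regularized integral uniformly in $h_0$ by a single seminorm $p_{-d}(f)$, and the smoothness data of $f$ (the compact open subgroup fixing $f$) enters the choice of cutoffs $\kappa_N$. Tracking these bounds carefully through the Fourier-type cancellation and then through Lemma \ref{major 2}(2) is the technical heart of the argument. The analogous steps in the GGP case (Section 7.1 of \cite{B15}) and the GR case (Appendix B of \cite{Wan17}) provide a direct template, which I would adapt to the present spherical pair, making use of the wavefront property established in Section \ref{sec:model}.
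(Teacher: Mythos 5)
The paper does not give an explicit proof here; it only cites Proposition 5.1 and Lemma 5.2 of \cite{Wan16}, which in turn follow the Waldspurger/Beuzart-Plessis template. Your high-level strategy (split off a one-parameter subgroup $U_\iota\subset U$ along which $\iota$ is nondegenerate, regularize the integral using the oscillation of $\psi\circ\iota$, and then invoke Lemma~\ref{major 2}(2) to absorb the weak Schwartz growth $\sigma(g)^{d}$) is the right one and matches the cited arguments. Two points need tightening.

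First, the phrase ``gaining a factor $(1+|\iota(uh_0)|)^{-N}$ in the effective integrand'' is not quite what happens. After you integrate out the $U_\iota$-direction, the quantity $\iota$ is no longer a free variable in the remaining integral (it is trivial on $H_0$ and on the complement of $U_\iota$ in $U$), so there is nothing left for such a factor to decay in. What the $p$-adic argument actually produces is a \emph{stabilization} statement: for $f$ bi-invariant under a fixed compact open $K$, the partial integrals over $\{|\iota(h)|\le q^{N}\}$ become independent of $N$ once $N$ exceeds a threshold depending only on $K$, and each partial integral is bounded by a fixed continuous seminorm $p_{-d}(f)$ via Lemma~\ref{major 2}(2). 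In particular the weight $(1+|\iota(h)|)^{-\delta}$ enters as an a priori majorant for the full $\zh$-integral \emph{before} integrating out $U_\iota$, not as a gained factor afterward. This is a reformulation rather than a wholly different idea, but as written your claim would not compile into a proof.

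Second, for part (2) the density argument is shaky: $\CC(\zg,\eta^{-1})$ need not be dense in $\CC^{w}(\zg,\eta^{-1})$ in the inductive limit topology (truncations converge in $p_{-d'}$ only for $d'$ strictly bigger than the exponent of $f$, which is not the same as convergence in the inductive limit). The clean argument bypasses density entirely: apply the change of variables $h\mapsto h_0 h h_1^{-1}$ directly to the regularized partial integral over $\{|\iota(h)|\le q^{N}\}$. Since $\iota$ is a homomorphism, the domain shifts to $\{|\iota(h)+c|\le q^{N}\}$ with $c=\iota(h_0)-\iota(h_1)$ fixed, and this coincides with $\{|\iota(h)|\le q^{N}\}$ once $q^{N}>|c|$; passing to the stable value gives the equivariance with the correct factor $\omega\otimes\xi(h_0h_1^{-1})$.
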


\begin{proof}
This proof is very similar to the GR case (Proposition 5.1 and Lemma 5.2 of \cite{Wan16}), and we will skip it here.
\end{proof}

Let $\pi$ be a tempered representation of $G(F)$ with central character $\eta$. For all $T\in \End(\pi)^{\infty}$, define
$$\CL_{\pi}(T)=\CP_{H,\xi}(\tr(\pi(g^{-1})T))=\int_{\zh}^{\ast} \tr(\pi(h^{-1})T)\omega\otimes \xi(h)\ud h.$$
By Proposition \ref{major 8}, the map $\CL_{\pi}: \End(\pi)^{\infty}\rightarrow \BC$ is a continuous linear form in $\End(\pi)^{-\infty}$. Here $\End(\pi)^{-\infty}$ is the topological dual of $\End(\pi)^{\infty}$ endowed with the strong topology. By Proposition \ref{major 8}, for any $h,h'\in H(F)$, we have
\begin{equation}\label{L 1}
\CL_{\pi}(\pi(h)T\pi(h'))=\omega\otimes \xi(hh')\CL_{\pi}(T).
\end{equation}
For $e,e'\in \pi$, define $T_{e,e'}\in \End(\pi)^{\infty}$ to be the map
$e_0\in \pi\mapsto (e_0,e')e.$
Set $\CL_{\pi}(e,e')=\CL_{\pi}(T_{e,e'})$. Then
$$\CL_{\pi}(e,e')=\int_{\zh}^{\ast} (e,\pi(h)e')\omega\otimes \xi(h) \ud h.$$
If we fix $e'$, by \eqref{L 1}, the map $e\in \pi\rightarrow \CL_{\pi}(e,e')$ belongs to $Hom_H(\pi,\omega\otimes \xi)$. Since $Span \{T_{e,e'}\colon e,e'\in \pi\}$ is dense in $\End(\pi)^{\infty}$, we have
\begin{equation}\label{L(pi) implies m(pi)}
\CL_{\pi}\neq 0\Rightarrow m(\pi)\neq 0.
\end{equation}
Later in this subsection, we will show that the other direction also holds. Before that, we discuss some basic properties of $\CL_{\pi}$.

\begin{lem}\label{lemma 1}
With the notation above, the followings hold.
\begin{enumerate}
\item The map
$\pi \in \Pi_{temp}(G,\eta)\rightarrow \CL_{\pi}\in \End(\pi)^{-\infty}$
is smooth.
\item For $f\in \CC(\zg,\eta^{-1})$, we have
$$\int_{\zh}f(h) \omega\otimes \xi(h)\ud h=\int_{\Pi_{temp}(G,\eta)} \CL_{\pi}(\pi(f))\mu(\pi) \ud\pi$$
with both integrals being absolutely convergent.
\item For $f\in \CC_{ind}(\zg,\eta^{-1})$ and $f'\in \CC(\zg,\eta)$, we have
\begin{align*}
&\int_{\Pi_{temp}(G,\eta)} \CL_{\pi}(\pi(f)) \overline{\CL_{\pi}(\pi(\bar{f'}))} \mu(\pi) \ud\pi\\
=&\int_{\zh}\int_{\zh}\int_{\zg} f(hgh')f'(g)\ud g \omega\otimes \xi(h') \ud h'\omega\otimes \xi(h)\ud h
\end{align*}
where the left hand side is absolutely convergent and the right hand side is convergent in that order but is not necessarily absolutely convergent.
\end{enumerate}
\end{lem}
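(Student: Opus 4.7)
All three statements follow the template of Proposition~8.1.1 of \cite{B15} for the GGP case, and are based on the Harish-Chandra-Plancherel formula combined with the continuous extension $\CP_{H,\omega\otimes\xi}$ of Proposition~\ref{major 8} and the majorization results from Section~\ref{sec:model}. I would prove (1) and (2) as essentially direct consequences of these tools and treat (3) as the main substantive step.

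For (1), I would work orbit by orbit on $\Pi_{temp}(G,\eta)$. Fixing an orbit $\CO = \{I_{P}^{G}(\tau_\lambda) : \lambda\in i\Fa_{M,0}^{\ast}\}$, the Iwasawa realization identifies $\End(\pi_\lambda)^{\infty}$ with $\End(\pi_K)^{\infty}$ independently of $\lambda$. The matrix coefficient $g\mapsto \tr(\pi_\lambda(g^{-1})T_\lambda)$ is then smooth in $\lambda$ for any smooth family $\lambda\mapsto T_\lambda$, with all derivatives dominated locally uniformly by a multiple of $\Xi^{G}(g)\sigma(g)^{d}$ for some $d$ depending on the order of differentiation. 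Since $\CP_{H,\omega\otimes\xi}$ is continuous on $\CC^{w}(\zg,\eta^{-1})$, differentiation under the integral in the definition of $\CL_{\pi_\lambda}(T_\lambda)$ is legitimate and yields smoothness in $\lambda$.

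For (2), I would apply the Plancherel formula pointwise to obtain $f(h) = \int_{\Pi_{temp}(G,\eta)}\tr(\pi(h^{-1})\pi(f))\mu(\pi)\,d\pi$, multiply by $\omega\otimes\xi(h)$, and integrate over $\zh$. Absolute convergence of the resulting double integral follows from the metrical Paley-Wiener theorem (which confines the spectral parameter to finitely many orbits, with rapid decay within each orbit), the uniform bound $|\tr(\pi(h^{-1})\pi(f))|\ll \Xi^{G}(h)\sigma(h)^{-d}$ in $\pi$, and the integrability of $\Xi^{G}(h)\sigma(h)^{-d}$ on $\zh$ for large $d$ furnished by Lemma~\ref{major 2}. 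Fubini then produces the identity, after which one recognises the inner integral as $\CL_\pi(\pi(f))$.

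For (3), the hypothesis $f\in\CC_{ind}(\zg,\eta^{-1})$ is essential: it forces $\pi(f) = 0$ for every discrete series $\pi\in\Pi_{2}(G,\eta)$, so the spectral integral is effectively over $\Pi_{temp,ind}(G,\eta)$, whose members have matrix coefficients which, although not genuinely Schwartz, satisfy good enough decay along $H$ after twisting by $\omega\otimes\xi$. The strategy is to write out $\CL_\pi(\pi(f))\overline{\CL_\pi(\pi(\bar{f'}))}$ as an iterated integral over $\zh\times\zh$ of products of traces, move the spectral integration inside by uniform convergence (using part~(1) and the Paley-Wiener behaviour of $\pi\mapsto\pi(f)$), and then collapse the inner spectral integral against $\mu(\pi)$ using Plancherel in its bilinear form (the $L^{2}$-isometry between $\CC(\zg,\eta^{-1})$ and $\CC(\Pi_{temp}(G,\eta))$). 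This converts the spectral expression into a group integral of the form $\int_{\zg} f(hgh')f'(g)\,dg$, producing the right-hand side. The main obstacle is that the iterated integral on the right is only conditionally convergent, so raw Fubini is unavailable; this must be replaced by viewing the inner $h'$-integral as an element of $\CC^{w}(\zg,\eta^{-1})$ via Proposition~\ref{major 8} and applying Plancherel to that, using Propositions~\ref{major 4}(2) and \ref{major 5}(\ref{item:major-5-2}) to control decay along $H$-directions. This is exactly the route followed for GGP in \cite{B15}, and the ingredients transfer to our setting because the pair $(G,H)$ satisfies analogous weak Cartan and majorization properties (Propositions~\ref{cartan 1} and \ref{major 4}).
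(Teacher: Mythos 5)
Your treatment of parts (1) and (2) matches the paper's intent: both are handled as in Lemma~8.2.1 of \cite{B15} via the Plancherel formula, the continuity of $\CP_{H,\omega\otimes\xi}$ from Proposition~\ref{major 8}, and the integrability estimates of Lemma~\ref{major 2}. The paper simply cites the GGP and GR arguments for these parts, and your expanded sketch is consistent with them.

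However, your explanation of why $f\in\CC_{ind}(\zg,\eta^{-1})$ is required in part (3) misidentifies the obstruction. You attribute it to decay properties of matrix coefficients of representations in $\Pi_{temp,ind}$ versus discrete series; this is not only not the reason the paper gives, it runs in the wrong direction, since discrete series matrix coefficients decay \emph{faster} than those of general tempered representations, so a decay-based argument would favor $\Pi_2$, not exclude it. The actual reason, which the paper explicitly emphasizes, is that the proof of (3) in \cite{B15} and \cite{Wan17} relies on the Gelfand pair condition $m(\pi)\leq 1$ for all irreducible tempered $\pi$: roughly speaking, one needs $\CL_\pi$ to be expressible as $T\mapsto \ell(T e^\vee)$ for a single $\ell\in\Hom_H(\pi,\omega\otimes\xi)$ in order to collapse the product $\CL_\pi(\pi(f))\overline{\CL_\pi(\pi(\overline{f'}))}$ into a single trace and then apply Plancherel. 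In this paper the Gelfand pair condition is only established for $\pi\in\Pi_{temp}(G,\eta)\smallsetminus\Pi_2(G,\eta)$ (Corollary~\ref{Gelfand pair cor}), so the spectral integral must avoid $\Pi_2(G,\eta)$, and this is precisely what the hypothesis $f\in\CC_{ind}$ enforces. Your sketch of (3) never invokes multiplicity one, so even setting aside the misattributed role of $\CC_{ind}$, the strategy as written does not supply the key ingredient that makes the product of two $\CL_\pi$-values collapse into a group integral; you would need to add this step, drawing on Corollary~\ref{Gelfand pair cor} to justify it for the representations that actually contribute.
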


\begin{proof}
The proof is similar to the GGP case (Lemma 8.2.1 of \cite{B15}) and the GR case (Lemma 6.2.2 of \cite{Wan17}), so we will skip it here. The only thing we want to point out is that the proof of (3) in both previous cases uses the Gelfand pair condition (i.e. $m(\pi)\leq 1$ for all irreducible tempered representations $\pi$). But we don't have this condition for the current model, all we have is that $m(\pi)\leq 1$ for all $\pi\in \Pi_{temp}(G,\eta)\smallsetminus\Pi_2(G,\eta)$ (see Corollary \ref{Gelfand pair cor}). This is why we require $f\in \CC_{ind}(\zg,\eta^{-1})$ in the statement of (3) (note that in the previous two cases, the equality in (3) holds for all $f\in \CC(\zg,\eta^{-1})$).
\end{proof}

We then study the behavior of $\CL_{\pi}$ under the parabolic induction. Let $\bar{Q}=M_Q\bar{U}_{Q}$ be a good parabolic subgroup, and $\tau\in \Pi_2(M_Q)$ be a discrete series whose central character equals $\eta$ on $Z_G(F)$. Set $\pi=I_{\bar{Q}}^{G}(\tau)$. Then $\pi$ is a tempered representation of $G$ with central character $\eta$. Let $H_{\bar{Q}}=H\cap \bar{Q}$. For $T\in \End(\tau)^{\infty}$, define
$$\CL_{\tau}(T)=\int_{Z_H(F)\back H_{\bar{Q}}(F)} \tr(\tau(h_{\bar{Q}}^{-1})T)\delta_{H_{\bar{Q}}}(h_{\bar{Q}})^{1/2} \omega\otimes \xi(h_{\bar{Q}}) \ud h_{\bar{Q}}.$$
The integral above is absolutely convergent by Proposition \ref{major 5} \eqref{item:major-5-2} together with the assumption that $\tau$ is a discrete series.

\begin{prop}\label{parabolic induction}
With the notation above, we have
$$\CL_{\pi}\neq 0 \iff \CL_{\tau}\neq 0.$$
\end{prop}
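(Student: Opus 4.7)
The plan is to establish an explicit formula expressing $\CL_\pi$ in terms of $\CL_\tau$ from which the biconditional follows at once. This mirrors the analogous reduction in the GGP case (Section 8 of \cite{B15}) and the GR case (Section 7 of \cite{Wan17}), and exploits the fact that $\bar{Q}$ is good, so that $H(F)\bar{Q}(F)$ is open dense in $G(F)$.

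\medskip
First I would realize $\pi$ on the compact model $I_{K\cap\bar{Q}}^{K}(\tau_K)$ via the Iwasawa decomposition $G(F)=\bar{Q}(F)K$, so that for any $e,e'$ in this model the matrix coefficient is
$$
(e,\pi(g)e')=\int_K \bigl(e(k),\,(\pi(g)e')(k)\bigr)_{\tau}\,\ud k.
$$
Plugging this into $\CL_\pi(T_{e,e'})=\int^{\ast}_{Z_H\bs H}(e,\pi(h)e')\,\omega\otimes\xi(h)\,\ud h$ and moving the $K$-integration outside reduces the problem to understanding the inner $h$-integral on $Z_H(F)\bs H(F)$.

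\medskip
Second, I would use that $\bar{Q}$ is good to obtain an integration formula on $Z_H(F)\bs H(F)$. Since the multiplication map $H(F)\times\bar{Q}(F)\to G(F)$ is dominant with stabilizer $H_{\bar{Q}}$, the open immersion $H_{\bar{Q}}(F)\bs H(F)\hookrightarrow \bar{Q}(F)\bs G(F)$ together with $G(F)=\bar{Q}(F)K$ yields (modulo a set of measure zero) a decomposition
$$
\int_{Z_H\bs H} f(h)\,\ud h=\int_{K_0}\int_{Z_H\bs H_{\bar{Q}}} f(h_{\bar{Q}}\,k)\,\ud h_{\bar{Q}}\,\ud k
$$
for a suitable compact set $K_0\subset K$ (with appropriate transfer of Haar measures). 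Combining this with the transformation rule $e'(h_{\bar{Q}}k)=\delta_{\bar{Q}}(h_{\bar{Q}})^{1/2}\tau(h_{\bar{Q}})e'(k)$ and the crucial identity $\delta_{\bar{Q}}|_{H_{\bar{Q}}}=\delta_{H_{\bar{Q}}}$ from Proposition \ref{major 5}(1), the $\omega\otimes\xi$-twisted $h$-integral collapses to an integral of the form defining $\CL_\tau$. The resulting identity takes the shape
$$
\CL_\pi(T_{e,e'})=\int_{K_0}\CL_\tau\bigl(T_{e(k),\,e'(k)}\bigr)\,\ud k.
$$

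\medskip
Third, from this explicit formula both implications are immediate. If $\CL_\tau\neq 0$, pick $v,v'\in V_\tau$ with $\CL_\tau(T_{v,v'})\neq 0$ and construct $e,e'$ supported in a small neighborhood of a well-chosen point in $K_0$ with $e(k_0)=v$, $e'(k_0)=v'$; smallness of the support ensures $\CL_\pi(T_{e,e'})\neq 0$. Conversely, if $\CL_\pi\neq 0$, the integrand above cannot vanish identically, so $\CL_\tau\neq 0$.

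\medskip
The main obstacle will be the third step of the derivation: the integral defining $\CL_\pi$ is only conditionally convergent and $\CL_\pi$ has been defined through the continuous extension of Proposition \ref{major 8}, so naive Fubini is not available. The interchange of the $K$-integral and the $h$-integral, and the unfolding of the $h$-integral along the open orbit, have to be justified using the majorization estimates of Lemma \ref{major 2}(2) and Proposition \ref{major 4}, together with the fact that $\tau$ is a discrete series (which makes the $H_{\bar{Q}}$-integral absolutely convergent by Proposition \ref{major 5}(2)). Concretely, one would first establish the identity for matrix coefficients of $\pi$ obtained from compactly supported sections (where everything is absolutely convergent), and then pass to the general tempered case by a density/continuity argument using the weak Harish-Chandra-Schwartz estimates of Section \ref{sec:model}.
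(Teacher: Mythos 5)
The high-level strategy here --- unfold $\CL_{\pi}$ along the open $H$-orbit in $\bar{Q}(F)\bs G(F)$ furnished by the goodness of $\bar{Q}$, and use $\delta_{\bar{Q}}|_{H_{\bar{Q}}}=\delta_{H_{\bar{Q}}}$ to land on $\CL_{\tau}$ --- is the right one and matches what the paper's cited references (Proposition 5.6 of \cite{Wan16}; Section 8.4 of \cite{B15}) do. But the integration formula you use to implement it is wrong, and this error breaks the derivation. The claimed identity
$$\int_{Z_H(F)\bs H(F)} f(h)\,\ud h=\int_{K_0}\int_{Z_H(F)\bs H_{\bar{Q}}(F)} f(h_{\bar{Q}}\,k)\,\ud h_{\bar{Q}}\,\ud k$$
would require an Iwasawa-type factorization $H(F)=H_{\bar{Q}}(F)\cdot K_0$ with $K_0\subset K\cap H(F)$ compact. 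That fails: e.g.\ for $\bar{Q}=\bar{P}$ one has $H_{\bar{Q}}=H_{0}$, and $H(F)=H_{0}(F)\cdot K_0$ with $K_0$ compact would force the unipotent radical $U(F)$ to be compact, which it is not. The good-parabolic property gives $G=\bar{Q}H$ generically, \emph{not} a decomposition of $H$ itself. Consequently the ``explicit formula'' $\CL_{\pi}(T_{e,e'})=\int_{K_0}\CL_{\tau}(T_{e(k),e'(k)})\,\ud k$ cannot be right: the left side involves integration over $Z_H\bs H$ and over $\bar{Q}\bs G$, the right over $K_0\times(Z_H\bs H_{\bar{Q}})$, and these disagree in the number of variables by $\dim H-\dim H_{\bar{Q}}=\dim\bar{U}_Q>0$.

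The correct unfolding transfers the matrix-coefficient integral $\int_{(K\cap\bar{Q})\bs K}\langle e(k),e'(kh)\rangle_\tau\ud k$ onto the open $H$-orbit in $\bar{Q}(F)\bs G(F)$, i.e.\ onto $H_{\bar{Q}}(F)\bs H(F)$ (with a Jacobian factor), and then decomposes the outer $\int^{\ast}_{Z_H(F)\bs H(F)}$ through $Z_H(F)\bs H_{\bar{Q}}(F)$ and $H_{\bar{Q}}(F)\bs H(F)$. After a change of variables the inner $Z_H\bs H_{\bar{Q}}$-integral collapses to $\CL_{\tau}$ applied to the section values (here $\delta_{\bar{Q}}|_{H_{\bar{Q}}}=\delta_{H_{\bar{Q}}}$ from Proposition \ref{major 5} enters), leaving a \emph{bilinear} expression: a double integral over $H_{\bar{Q}}(F)\bs H(F)\times H_{\bar{Q}}(F)\bs H(F)$ of $\CL_{\tau}(e(\eta),e'(\eta'))$ against explicit modular and $\omega\otimes\xi$ factors. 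From that bilinear formula both implications of the equivalence do follow by the small-support argument you sketch, and the convergence issues are exactly as you flag them (Lemma \ref{major 2}, Propositions \ref{major 4} and \ref{major 5}\eqref{item:major-5-2}, discreteness of $\tau$, then a density/continuity argument). So the framing and the final step are sound, but your central formula must be replaced by the bilinear one before the argument closes.
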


\begin{proof}
The proof is very similar to the GR model case (Proposition 5.6 of \cite{Wan16}), so we will skip it here.
\end{proof}

The following proposition tells us the relation between $\CL_{\pi}$ and $m(\pi)$.
\begin{prop}\label{m(pi) L(pi)}
Let $\pi$ be an irreducible tempered representation of $G(F)$ with central character $\eta$. Then
$\CL_{\pi}\neq 0 \iff m(\pi)\neq 0.$
\end{prop}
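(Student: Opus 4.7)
The implication $\CL_{\pi} \neq 0 \Rightarrow m(\pi) \neq 0$ is already recorded in \eqref{L(pi) implies m(pi)}, so the plan is to prove the converse $m(\pi) \neq 0 \Rightarrow \CL_{\pi} \neq 0$. The argument will split into a reduction step handling $\pi \notin \Pi_2(G,\eta)$ and a direct argument for $\pi \in \Pi_2(G,\eta)$.

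First, suppose $\pi$ is tempered but not a discrete series. By Corollary \ref{Gelfand pair cor}, $m(\pi) \leq 1$, so $m(\pi) = 1$. Realize $\pi = I_{\bar{Q}}^{G}(\tau)$ with $\bar{Q} = M_Q \bar{U}_Q$ a good parabolic---such a realization exists by the tempered classification together with Proposition \ref{spherical}, and, if necessary, by iterating through a sequence of good parabolic subgroups in the successive Levis until one reaches a discrete series $\sigma$ on a Levi $M$. Repeatedly applying Theorem \ref{multiplcity parabolic induction} yields $m(\sigma) = m(\pi) \neq 0$, and repeatedly applying Proposition \ref{parabolic induction} yields $\CL_{\pi} \neq 0 \iff \CL_{\sigma} \neq 0$. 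The non-vanishing of $\CL_{\sigma}$ is then the discrete-series case applied either inside the Levi $M$ or, after identifying it with the analogous object on the reduced model $(G_{\bar{Q}}, H_{\bar{Q}})$, may be assumed as an induction hypothesis on the semisimple rank of $G$.

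It remains to treat $\pi \in \Pi_2(G, \eta)$. Pick $\ell \in \Hom_H(\pi, \omega \otimes \xi)$ nonzero. For $v, v' \in \pi$, the normalized matrix coefficient $f_{v,v'}(g) = d(\pi) (v, \pi(g) v')$ lies in ${}^{\circ}\CC(\zg, \eta^{-1})$, and by Schur orthogonality $\pi'(f_{v,v'}) = 0$ whenever $\pi' \in \Pi_2(G, \eta)$ is inequivalent to $\pi$, while $\pi(f_{v,v'}) = T_{v,v'}$. Applying Lemma \ref{lemma 1}(2) to $f_{v,v'}$ and extracting the $\pi$-contribution produces the identity
\[
\int_{\zh}^{\ast} f_{v,v'}(h)\, \omega \otimes \xi(h) \ud h \;=\; d(\pi)\, \CL_{\pi}(v, v'),
\]
so it suffices to show that the Hermitian form $(v, v') \mapsto \CL_{\pi}(v, v')$ is not identically zero.

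The main obstacle is precisely this non-vanishing. For each $v_0 \in \pi$ the functional $v \mapsto \CL_{\pi}(v, v_0)$ belongs to $\Hom_H(\pi, \omega \otimes \xi)$, so if $\CL_{\pi} \equiv 0$ this functional is zero for every $v_0$. To contradict the existence of $\ell$, choose $v_0 \in \pi$ with $\ell(v_0) \neq 0$, factor $v_0 = \pi(\phi) w_0$ with $\phi \in C_c^{\infty}(\zg, \eta^{-1})$ by Dixmier--Malliavin, and combine the smoothness of $\pi' \mapsto \CL_{\pi'}$ (Lemma \ref{lemma 1}(1)) with the Plancherel expansion (Lemma \ref{lemma 1}(2)) to identify $\CL_{\pi}$ with the positive-semidefinite form $\sum_i \ell_i \otimes \overline{\ell_i}$, where $\{\ell_i\}$ is a basis of $\Hom_H(\pi, \omega \otimes \xi)$ normalized with respect to a natural Plancherel inner product. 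Because $m(\pi) \neq 0$, this identification forces $\CL_{\pi}(v_0, v_0) > 0$ for the chosen $v_0$. Establishing this rank-one decomposition of $\CL_{\pi}$---paralleling the strategy of Beuzart-Plessis for the Gan--Gross--Prasad model and of Wan for the Ginzburg--Rallis model---is the technical heart of the argument.
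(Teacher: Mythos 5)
Your forward direction via \eqref{L(pi) implies m(pi)} matches the paper. For the converse, however, your proposal is incomplete where it matters most, and the portions you do argue contain issues.

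First, the reduction step. You invoke Theorem~\ref{multiplcity parabolic induction} to pass from $m(\pi)\neq 0$ to $m(\sigma)\neq 0$ for the inducing discrete series, but that theorem is proved \emph{after} Proposition~\ref{m(pi) L(pi)}, and its proof runs through the chain $m(\tau)\neq 0\iff\CL_\tau\neq 0\iff\CL_\pi\neq 0\iff m(\pi)\neq 0$ — precisely what is being established here; citing it is circular. (What you actually need, $m(\tau)\geq m(\pi)$, is already available from Proposition~\ref{Gelfand pair}.) You also write ``Realize $\pi=I_{\bar Q}^G(\tau)$,'' but an irreducible tempered $\pi$ may only be a \emph{direct summand} of $I_{\bar Q}^G(\tau)$, and Proposition~\ref{parabolic induction} only controls $\CL$ on the full induced representation; passing from $\CL_{I_{\bar Q}^G(\tau)}\neq 0$ to $\CL_\pi\neq 0$ needs the multiplicity-one input of Corollary~\ref{Gelfand pair cor} applied to the complementary summand, a step you omit.

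Second, and more seriously, the discrete series case is only a proposal. You acknowledge that establishing the ``rank-one decomposition'' $\CL_\pi=\sum_i\ell_i\otimes\overline{\ell_i}$ is ``the technical heart of the argument,'' and you do not supply it. The paper's proof avoids this positivity question entirely. Its key tool — absent from your proposal — is the identity \eqref{thm 1.6},
$$l(T_\pi e)=\int_{H(F)\backslash G(F)} l(\pi(x)e)\int_{\Pi_{temp}(G,\eta)}\CL_\Pi\bigl(T_\Pi\Pi(x^{-1})\bigr)\mu(\Pi)\,\ud\Pi\,\ud x,$$
valid for all $T\in C_c^\infty(\Pi_{temp}(G,\eta))$ and deduced from the Plancherel formula. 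Choosing $e=e_0$ with $l(e_0)\neq 0$ and $T=T^0$ supported on the connected component $\CO$ of $\pi'=I_{\bar Q}^G(\tau)$ (with $\tau\in\Pi_2(M_Q)$ and $\pi$ a summand of $\pi'$) with $T^0_\pi(e_0)=e_0$, the left side is nonzero, forcing $\CL_{I_{\bar Q}^G(\tau_\lambda)}\neq 0$ for some unramified twist $\lambda$. Proposition~\ref{parabolic induction} then gives $\CL_{\tau_\lambda}\neq 0$; Lemma~\ref{reduce model} — a twist-invariance statement for $\CL_\tau$ which for Type II reduced models uses Theorem~\ref{main theorem for reduced models} — gives $\CL_\tau\neq 0$, hence $\CL_{\pi'}\neq 0$; and Corollary~\ref{Gelfand pair cor} kills the complementary summand to yield $\CL_\pi\neq 0$. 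This treats the discrete and non-discrete cases uniformly (when $\pi$ is discrete, $\bar Q=G$ and $\CO=\{\pi\}$) and makes no appeal to a decomposition of $\CL_\pi$ as a sum of rank-one forms. The missing ingredient in your proposal is precisely \eqref{thm 1.6}.
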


\begin{proof}
By \eqref{L(pi) implies m(pi)}, we only need to show that $m(\pi)\neq 0\Rightarrow \CL_{\pi}\neq 0$. From now on, we assume that $m(\pi)\neq 0$. Fix $0\neq l\in Hom_{H(F)}(\pi,\omega\otimes\xi)$. For $T\in C_{c}^{\infty}(\Pi_{temp}(G,\eta))$, by the same argument as in the GR model case (Section 5.5 of \cite{Wan16}), we have
\begin{equation}\label{thm 1.6}
l(T_{\pi}e)=\int_{H(F)\back G(F)} l(\pi(x)e)\int_{\Pi_{temp}(G,\eta)} \CL_{\Pi}(T_{\Pi} \Pi(x^{-1})) \mu(\Pi) \ud\Pi \ud x
\end{equation}
for all $e\in \pi$. Choose a good parabolic subgroup $\bar{Q}=M_Q\bar{U}_Q$ of $G$ and $\tau\in \Pi_2(M_Q)$ such that $\pi$ is a direct summand of $\pi'=I_{\bar{Q}}^{G}(\tau)$. Let
\begin{equation}\label{connected component}
\CO=\{ I_{\bar{Q}}^{G}(\tau_{\lambda})\colon \lambda\in i\Fa_{M_Q,0}^{\ast}\}\subset \Pi_{temp}(G,\eta)
\end{equation}
be the connected component containing $\pi'$. Choose $e_0\in \pi$ such that $l(e_0)\neq 0$, and let $T_0\in \End(\pi)^{\infty}$ with $T_0(e_0)=e_0$. We can easily find an element $T^0\in C_{c}^{\infty}(\Pi_{temp}(G,\eta))$ such that
$$T^{0}_{\pi}=T_0,\; Supp(T^0)\subset \CO.$$
By applying \eqref{thm 1.6} to the case $e=e_0$ and $T=T^0$, we know there exists $\lambda \in i\Fa_{M_Q,0}^{\ast}$ such that $\CL_{\pi_{\lambda}'}\neq 0$ where $\pi_{\lambda}'=I_{\bar{Q}}^{G}(\tau_{\lambda})$. By Proposition \ref{parabolic induction}, this implies that $\CL_{\tau_{\lambda}} \neq 0$. We need a lemma:

\begin{lem}\label{reduce model}
For all $\lambda\in i\Fa_{M_Q,0}^{\ast}$, we have
$$\CL_{\tau}\neq 0\iff\CL_{\tau_{\lambda}} \neq 0.$$
\end{lem}

\begin{proof}
When the reduced model $(G_{\bar{Q}},H_{\bar{Q}})$ is of Type I (defined in Page \pageref{pg:type-I}), it is easy to see from the definition that the nonvanishing property of $\CL_{\tau}$ is invariant under the unramified twist.

When the reduced model is of Type II, it is not clear from the definition that the unramified twist will preserve the nonvanishing property. Instead, we claim that $\CL_{\tau}$ is always nonzero in this case. In fact, since $\tau$ is a discrete series, by the same argument as in the GR model case (i.e. Remark 5.12 of \cite{Wan16}), we have $m(\tau)\neq 0\Rightarrow \CL_{\tau}\neq 0$. Hence it is enough to show that the multiplicity is always nonzero for Type II reduced models. But this just follows from Theorem \ref{main theorem for reduced models}.
\end{proof}
Back to the proof of the proposition. The lemma above implies that $\CL_{\tau}\neq 0$. Together with Proposition \ref{parabolic induction}, we have $\CL_{\pi'}\neq 0$. Since $\pi$ is a direct summation of $\pi'$, we can write $\pi'$ as $\pi\oplus \pi_0$. If $\CL_{\pi_0}\neq 0$, then $m(\pi_0)\neq 0$ which implies that $m(\pi')=m(\pi)+m(\pi_0)\geq 2$. Meanwhile, by Corollary \ref{Gelfand pair cor}, we know that $m(\pi')\leq 1$. So we get a contradiction. Hence we have $\CL_{\pi_0}=0$. But since $\CL_{\pi'}\neq 0$, we have $\CL_{\pi}\neq 0$. This finishes the proof of the proposition.
\end{proof}

Now we are ready to prove Theorem \ref{multiplcity parabolic induction} by assuming Proposition \ref{Gelfand pair} holds. We first recall the statement of the theorem. Let $\bar{Q}=M_Q\bar{U}_{Q}$ be a good parabolic subgroup of $G$, and let $\tau$ be an admissible tempered representation $M_{Q}(F)$ whose central character equals $\eta$ on $Z_G(F)$. Set $\pi=I_{\bar{Q}}^{G}(\tau)$. Our goal is to show that $m(\pi)=m(\tau)$. Without loss of generality, we can assume that $\tau$ is irreducible. By Theorem \ref{main theorem for reduced models} and Proposition \ref{Gelfand pair}, it is enough to show that
\begin{equation}\label{multiplcity parabolic induction 3}
m(\tau)\neq 0 \Rightarrow m(\pi)\neq 0.
\end{equation}
By applying the same argument in this section to the reduced models, we can define the operator $\CL_{\tau}$ via a regularized integral and we can prove analogy results of Proposition \ref{parabolic induction} and Theorem \ref{m(pi) L(pi)} for the reduced models. As a result, we have
$$m(\tau)\neq 0\iff \CL_{\tau}\neq 0\iff \CL_{\pi}\neq 0\iff m(\pi)\neq 0.$$
This proves \eqref{multiplcity parabolic induction 3} and finishes the proof of Theorem \ref{multiplcity parabolic induction}.

To end this subsection, we prove a proposition that will be used later in the proof of the spectral side of the trace formula.
\begin{prop}\label{parabolic induction 5}
Let $\CK\subset \Pi_{temp}(G,\eta)$ be a compact subset. Then there exists an element $T\in \CC(\Pi_{temp}(G,\eta))$ such that
$\CL_{\pi}(T_{\pi})=m(\pi)$ for all $\pi\in \CK$.
\end{prop}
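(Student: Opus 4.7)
The plan is to work orbit by orbit, combining the smoothness of $\pi\mapsto \CL_\pi$ from Lemma \ref{lemma 1}(1) with a partition-of-unity construction. First, since $\CK$ is compact and $\Pi_{temp}(G,\eta)$ decomposes as a disjoint union of orbits $\CO=\{I_{\bar Q}^G(\tau_\lambda):\lambda\in i\Fa_{M_Q,0}^\ast\}$ (one per cuspidal datum), only finitely many orbits $\CO_1,\ldots,\CO_n$ meet $\CK$. Any $T\in\CC(\Pi_{temp}(G,\eta))$ is supported on finitely many orbits, so it suffices to construct, on each $\CO_i$, a smooth section $T^{(i)}$ supported there with $\CL_\pi(T^{(i)}_\pi)=m(\pi)$ for $\pi\in\CK\cap\CO_i$, and then set $T=\sum_i T^{(i)}$.

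Fix one such orbit $\CO$. The key preliminary step is to show that $m(\pi)$ is constant along $\CO$. If $M_Q=G$ (the discrete series case), then $i\Fa_{G,0}^\ast$ is finite and $\CO$ is a finite set, so we simply proceed pointwise. Otherwise, Theorem \ref{multiplcity parabolic induction} gives $m(I_{\bar Q}^G(\tau_\lambda))=m(\tau_\lambda)$, and Theorem \ref{main theorem for reduced models}(2) forces $m(\tau_\lambda)\in\{0,1\}$. Combining Proposition \ref{m(pi) L(pi)} applied to the reduced model with Lemma \ref{reduce model}, we get $m(\tau_\lambda)\neq 0\iff \CL_{\tau_\lambda}\neq 0\iff \CL_\tau\neq 0$, which is independent of $\lambda$. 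Hence $m(\pi)$ is a constant $c\in\{0,1\}$ along $\CO$.

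If $c=0$, set $T^{(i)}=0$; trivially $\CL_\pi(T^{(i)}_\pi)=0=m(\pi)$. Otherwise, $\CL_\pi\neq 0$ for every $\pi\in\CO$ by Proposition \ref{m(pi) L(pi)}. Using the identification $\End(\pi_\lambda)^\infty\simeq \End(\pi_K)^\infty$ along the orbit, I pick for each $\pi_0\in\CK\cap\CO$ a constant section $S^{(0)}$ with $\CL_{\pi_0}(S^{(0)})\neq 0$; by smoothness (Lemma \ref{lemma 1}(1)), this nonvanishing persists on an open neighborhood $U^{(0)}\subset \CO$ of $\pi_0$. Compactness of $\CK\cap\CO$ yields a finite subcover $\{U_j\}_{j=1}^N$ with associated sections $S^{(j)}$; choose a smooth partition of unity $\{\phi_j\}$ on the compact torus $\CO$ with each $\phi_j$ compactly supported in $U_j$ and $\sum_j \phi_j\equiv 1$ on $\CK\cap\CO$. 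Then define
$$
T^{(i)}_\pi\;=\;c\sum_{j=1}^N \frac{\phi_j(\pi)}{\CL_\pi(S^{(j)}_\pi)}\,S^{(j)}_\pi.
$$
Each summand is a smooth section on $\CO$ (the denominator is nonzero on $\mathrm{supp}(\phi_j)\subset U_j$), and linearity of $\CL_\pi$ gives $\CL_\pi(T^{(i)}_\pi)=c\sum_j \phi_j(\pi)=c=m(\pi)$ for $\pi\in\CK\cap\CO$. The discrete-series case is handled the same way but pointwise, without needing a partition of unity.

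The main obstacle is the constancy of $m$ along each orbit, which is what lets one pull the scalar $c$ outside the partition-of-unity sum; this relies on Theorem \ref{multiplcity parabolic induction}, the multiplicity bound of Theorem \ref{main theorem for reduced models}(2), and Lemma \ref{reduce model}. Once constancy is in hand, the construction is the standard smoothing argument on a compact homogeneous space.
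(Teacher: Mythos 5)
Your proof is correct and follows essentially the same route as the paper: reduce to finitely many orbits meeting $\CK$, establish constancy of $m(\pi)$ along each orbit via Theorem \ref{multiplcity parabolic induction} together with the reduced-model facts, then use smoothness of $\pi\mapsto\CL_\pi$ (Lemma \ref{lemma 1}(1)) and nonvanishing (Proposition \ref{m(pi) L(pi)}) to cut down a local section to the desired value and glue. Your streamlining of the constancy step through Lemma \ref{reduce model} and the reduced-model analogue of Proposition \ref{m(pi) L(pi)}, instead of the paper's explicit Type I/Type II case split, is cosmetic (Lemma \ref{reduce model}'s own proof is by that split), and your explicit partition-of-unity formula $T^{(i)}_\pi=c\sum_j\phi_j(\pi)\,\CL_\pi(S^{(j)}_\pi)^{-1}S^{(j)}_\pi$ simply makes visible the gluing step that the paper leaves implicit in its reduction to neighborhoods.
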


\begin{proof}
It is enough to show that for all $\pi' \in \Pi_{temp}(G,\eta)$, there exists $T\in \CC(\Pi_{temp}(G,\eta))$ such that
$\CL_{\pi}(T_{\pi})=m(\pi)$
for all $\pi$ in some neighborhood of $\pi'$ in $\Pi_{temp}(G,\eta)$. If $\pi'$ is a discrete series, by Theorem \ref{m(pi) L(pi)}, we can find $T'\in \End(\pi')^{\infty}$ such that $\CL_{\pi'}(T')=m(\pi')$. Then we just need to take any $T\in \CC(\Pi_{temp}(G,\eta))$ with $T_{\pi'}=T'$.

If $\pi'$ is not a discrete series, we can find a good parabolic subgroup $\bar{Q}=M_Q\bar{U}_{Q}$ of $G$ and $\tau\in \Pi_2(M_Q)$ such that $\pi'=I_{\bar{Q}}^{G}(\tau)$. Let $\CO=\{ I_{\bar{Q}}^{G}(\tau_{\lambda})\colon \lambda\in i\Fa_{M_Q,0}^{\ast}\}$ be the connected component containing $\pi'$. We first show that
\begin{description}
\item[(1)] The multiplicity is constant on $\CO$ (i.e. $m(\pi)=m(\pi')$ for all $\pi\in \CO$).
\end{description}

By Theorem \ref{multiplcity parabolic induction}, it is enough to show that the multiplicity $m(\tau)$ of the reduced model $(G_{\bar{Q}},H_{\bar{Q}})$ is invariant under unramified twist. When the reduced model is of Type I, this just follows from the definition. If the reduced model is of Type II, by Theorem \ref{main theorem for reduced models}, the multiplicity is always equal to $1$ and hence invariant under the unramified twist. This proves (1).

Now if $m(\pi')=0$, then $m(\pi)=0$ for all $\pi\in \CO$ and we can just take $T=0$. If $m(\pi')\neq 0$, by the discussion above together with Corollary \ref{Gelfand pair cor}, we know that $m(\pi)=1$ for all $\pi\in \CO$. By Theorem \ref{m(pi) L(pi)}, we can find $T'\in \End(\pi')^{\infty}$ such that $\CL_{\pi'}(T')\neq 0$. Let $T^0\in \CC(\Pi_{temp}(G,\eta))$ be an element with $T_{\pi'}^{0}=T'$. By Lemma \ref{lemma 1}(1), the function $\pi\rightarrow \CL_{\pi}(T^{0}_{\pi})$ is a smooth function. The value at $\pi'$ is just $\CL_{\pi'}(T')\neq 0$. Hence we can find a smooth compactly supported function $\varphi$ on $\Pi_{temp}(G,\eta)$ such that
$\varphi(\pi)\CL_{\pi}(T^{0}_{\pi})=1$
for all $\pi$ belonging to a small neighborhood of $\pi'$. Then we just need to take $T=\varphi T^0$ and this finishes the proof of the Proposition.
\end{proof}

\subsection{The proof of the spectral side of the trace formula}
In this subsection, we will prove the spectral side of the trace formula. Since we only know $m(\pi)\leq 1$ for $\pi\in \Pi_{temp}(G)\smallsetminus\Pi_{2}(G)$, we need to divide the proof into two steps.

\textbf{Step 1:} We first prove the spectral side of the trace formula for $f\in {}^{\circ}\CC(\zg,\eta^{-1})$. For such $f$, the spectral side only contains discrete series. By a similar argument as in the Galois model case (Section 3 of \cite{B17}), we can prove the trace formula without using the multiplicity one assumption.

\textbf{Step 2:}  We then prove the trace formula for $f\in \CC_{ind,scusp}(\zg,\eta^{-1})$. For this kind of test functions, the spectral side will only contain tempered representations which are not discrete series. But for these representations, the multiplicity one assumption holds by Corollary \ref{Gelfand pair cor}. Hence we can prove the trace formula by the same argument as in the GGP case and the GR case.

We start with Step 1. For $\pi\in \Pi_2(G,\eta)$, let
$$\mathcal{B}_\pi:\pi\times \pi^\vee\to \BC$$
be the bilinear form defined by
$$\mathcal{B}_\pi(v,v^\vee):=\int_{\zh} \langle v,\pi(h)v^\vee\rangle (\omega\otimes \xi)(h)\ud h,\;(v,v^\vee)\in \pi\times \pi^\vee.$$
Note that the integral above is absolutely convergent by Lemma \ref{major 2}. It is easy to see from the definition that $\mathcal{B}_\pi$ descents to a bilinear pairing
$$\mathcal{B}_\pi: \pi_{\omega\otimes\xi} \times \pi^\vee_{(\omega\otimes \xi)^{-1}}\to \BC$$
where $\pi_{\omega\otimes\xi}$ (resp. $\pi^\vee_{(\omega\otimes \xi)^{-1}}$) is the $(H,\omega\otimes \xi)$-coinvariant spaces  (resp. $(H,(\omega\otimes \xi)^{-1})$-coinvariant)  of $\pi$ (resp. $\pi^\vee$).

\begin{prop}\label{prop intertwinings}
$\mathcal{B}_\pi$ induces a perfect pairing between $\pi_{\omega\otimes\xi}$ and $\pi^\vee_{(\omega\otimes \xi)^{-1}}$.
\end{prop}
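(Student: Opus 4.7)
My first task is to verify that $\mathcal{B}_\pi$ is absolutely convergent and factors to a bilinear form on the coinvariant spaces. Since $\pi\in\Pi_2(G,\eta)$, its matrix coefficients belong to ${}^{\circ}\CC(\zg,\eta^{-1})$, and the standard Harish-Chandra majorization combined with Lemma \ref{major 2}(1) gives absolute convergence of the defining integral. The change-of-variable identity
$$\mathcal{B}_\pi(\pi(h_0)v,\pi^\vee(h_1)v^\vee) = (\omega\otimes\xi)(h_0 h_1^{-1})\mathcal{B}_\pi(v,v^\vee)\quad(h_0,h_1\in H(F))$$
is then immediate, so $\mathcal{B}_\pi$ descends to a bilinear pairing on $\pi_{\omega\otimes\xi}\times\pi^\vee_{(\omega\otimes\xi)^{-1}}$.

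Next, I would identify $\mathcal{B}_\pi$ with the restriction of $\CL_\pi$ to rank-one operators. For $v\in\pi$ and $v^\vee\in\pi^\vee$, the operator $T_{v,v^\vee}\in\End(\pi)^\infty$ sending $u\mapsto\langle u,v^\vee\rangle v$ satisfies $\CL_\pi(T_{v,v^\vee})=\mathcal{B}_\pi(v,v^\vee)$, since the trace of $\pi(h^{-1})T_{v,v^\vee}$ is exactly $\langle v,\pi(h)v^\vee\rangle$. Thus the proposition is equivalent to showing that the natural map
$$\Phi\colon\pi^\vee\longrightarrow\Hom_H(\pi,\omega\otimes\xi),\qquad v^\vee\longmapsto\bigl(v\mapsto\mathcal{B}_\pi(v,v^\vee)\bigr)$$
is surjective (and the symmetric statement with $\pi$ and $\pi^\vee$ interchanged). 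Combined with the finite-dimensionality of $\pi_{\omega\otimes\xi}$ and $\pi^\vee_{(\omega\otimes\xi)^{-1}}$, which follows from admissibility of $\pi$ together with the wavefront spherical structure recalled in Section \ref{sec:model}, this surjectivity upgrades the nondegeneracy to a perfect pairing.

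The main obstacle is establishing the surjectivity of $\Phi$. My plan is to exploit Schur orthogonality for the discrete series $\pi$: the matrix-coefficient map embeds $\pi\otimes\pi^\vee$ as a direct summand of ${}^{\circ}\CC(\zg,\eta^{-1})$, and the orthogonal projection onto this summand can be described explicitly in terms of the formal degree $d(\pi)$. Given $\ell\in\Hom_H(\pi,\omega\otimes\xi)$, the function $g\mapsto\ell(\pi(g)\cdot)$ (with values in $\pi^*$) is controlled by the Harish-Chandra-Schwartz decay, and integrating it against the matrix-coefficient realization of $\pi^\vee$ via the projection above produces an element $v^\vee_\ell\in\pi^\vee$ with $\Phi(v^\vee_\ell)=\ell$. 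This strategy follows the same template as in \cite{B17} for the Galois model, where the Gelfand-pair condition is also unavailable; the adaptation here is facilitated by Lemma \ref{lemma 1}(3), which is precisely the spectral identity that makes the Schur-orthogonality inversion rigorous on our weak Harish-Chandra-Schwartz space.
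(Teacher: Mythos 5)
Your overall strategy is the same one the paper takes: reduce perfectness of $\mathcal{B}_\pi$ to the surjectivity of the map $\pi^\vee \to \Hom_H(\pi,\omega\otimes\xi)$ (and its mirror), and establish that surjectivity by a Schur-orthogonality / formal-degree computation following \cite{B17}, Proposition 3.2.1. The preliminary steps — absolute convergence of the defining integral via Lemma \ref{major 2}(1), the equivariance that lets $\mathcal{B}_\pi$ descend to the coinvariants, and the identification with $\CL_\pi$ restricted to rank-one operators — all match.

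However, there is a genuine gap in the justification of the central step, and the citation you lean on does not do the work you claim. You appeal to Lemma \ref{lemma 1}(3) as ``precisely the spectral identity that makes the Schur-orthogonality inversion rigorous,'' but Lemma \ref{lemma 1}(3) is stated only for $f\in\CC_{ind}(\zg,\eta^{-1})$ — the complement of the discrete spectrum in $\CC(\zg,\eta^{-1})$. For a discrete series $\pi\in\Pi_2(G,\eta)$, the matrix coefficient $g\mapsto\langle\pi(g)v,v^\vee\rangle$ lives in ${}^{\circ}\CC(\zg,\eta^{-1})$, exactly the subspace excluded by that hypothesis. Moreover, the proof of Lemma \ref{lemma 1}(3) itself relies on the multiplicity-one bound of Corollary \ref{Gelfand pair cor}, which is only available for $\pi\notin\Pi_2(G,\eta)$ — so invoking it here would be circular as well as out of scope. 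The actual new ingredient the paper needs, and proves before citing the argument of \cite{B17}, is Lemma \ref{lemma intertwinings}: the absolute convergence of $\int_{H(F)\backslash G(F)}|l(\pi(x)v)|^2\,\ud x$ and the interchange identity $l(\pi(f)v)=\int_{\zg}f(g)\,l(\pi(g)v)\,\ud g$. The content there is the improved pointwise bound $|l(\pi(x)v)|\ll\hc(x)\nor(x)^{-d}$ with a \emph{negative} exponent, which holds only because $\pi$ is a discrete series (the general tempered bound has a positive exponent $\nor(x)^{d}$ and would not give the required $L^2$-convergence against $\hc^2$ from Proposition \ref{major 4}). Your proposal asserts the Harish-Chandra-Schwartz decay controls the $\pi^*$-valued function $g\mapsto\ell(\pi(g)\cdot)$ but does not produce this discrete-series-specific estimate, and without it the Schur-orthogonality inversion does not converge.
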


\begin{proof}
This proposition follows from exactly the same argument as Proposition 3.2.1 of \cite{B17}  after we establish the next lemma.

\begin{lem}\label{lemma intertwinings}
For all $l\in Hom_H(\pi,\omega\otimes \xi)$, $v\in \pi$ and $f\in \mathcal{C}(\zg,\eta^{-1})$, the integrals
$$\int_{H(F)\backslash G(F)} \lvert l(\pi(x)v)\rvert^2 \ud x
\text{ and }
\int_{\zg} f(g) l(\pi(g)v) \ud g$$
are absolutely convergent. Moreover, we have
\begin{equation}\label{lemma intertwinings 2}
l(\pi(f)v)=\int_{\zg} f(g) l(\pi(g)v) \ud g.
\end{equation}
\end{lem}

\begin{proof}
By a similar argument as in the GGP case (Lemma 8.3.1 of \cite{B15}) or the GR case (Lemma 6.2.3 of \cite{Wan17}), we have
\begin{equation}\label{lemma intertwinings 1}
| l(\pi(x)e)| \ll \hc(x)\nor(x)^{-d}
\end{equation}
for all $e\in \pi$ and $x\in H(F)\back G(F)$. Note that both loc. cit. considered all tempered representations, and hence the right hand side in both loc. cit. is $\hc(x)\nor(x)^{d}$. Here since we only consider discrete series, we can have better bound on the right hand side (i.e. $\hc(x)\nor(x)^{-d}$). Combining \eqref{lemma intertwinings 1} with Proposition \ref{major 4}, we know that both integrals in the lemma are absolutely convergent. Finally, \eqref{lemma intertwinings 2} follows from the standard argument as in the GGP case (Section 8.5 of \cite{B15}) and the GR case (Section 6.5 of \cite{Wan17}).
\end{proof}
\end{proof}

Now we are ready to prove the spectral side of the trace formula for $f\in {}^{\circ}\CC(\zg,\eta^{-1})$. We fix such a test function $f$. Without loss of generality, we may assume that there exist $\pi\in \Pi_2(G,\chi)$, $v\in \pi$ and $v^{\vee}\in \pi^{\vee}$ such that $f(g)=\langle v,\pi(g)v^{\vee} \rangle$. Then the spectral side becomes
$$I_{spec}(f)=\tr(\pi(f)) m(\bar{\pi})=\tr(\pi(f))m(\pi).$$
For $x,y\in G(F)$, define
$$K_f(x,y)=\int_{\zh} f(x^{-1}hy)\omega\otimes \xi(h)\ud h.$$
Then we have
$$K_f(x,y)=\CB_{\pi}(\pi(x)v,\pi^{\vee}(y)v') \text{ and } K_f(x,x)=I(f,x).$$
Let $N=m(\pi)$ and let $v_1,\cdots,v_N$ be vectors in $\pi$ whose images in $\pi_{\omega\otimes \xi}$ form a basis. We then let $v_{1}^{\vee},\cdots,v_{N}^{\vee}$ be vectors in $\pi^{\vee}$ whose images in $\pi_{(\omega\otimes \xi)^{-1}}^{\vee}$ form the dual basis under the pairing $\CB_{\pi}$. As in the Galois model case (Section 3 of \cite{B17}), we have
\begin{eqnarray*}
I(f)&=&\int_{H(F)\back G(F)}I(f,x)\ud x=\int_{H(F)\back G(F)} \CB_{\pi}(\pi(x)v,\pi^{\vee}(x)v')\ud x\\
&=&\sum_{i=1}^{N} \int_{H(F)\back G(F)} \CB_{\pi}(\pi(x)v,v_{i}^{\vee}) \CB_{\pi}(v_i, \pi^{\vee}(x)v') \ud x\\
&=&\sum_{i=1}^{N} \int_{\zg} \langle\pi(g)v, v_{i}^{\vee}\rangle \CB_{\pi}(v_i,\pi^{\vee}(g)v^{\vee})\ud g\\
&=&\sum_{i=1}^{N} \frac{\langle v,v^{\vee}\rangle}{d(\pi)} \CB_{\pi}(v_i,v_{i}^{\vee})=\tr(\pi(f)) \cdot N=\tr(\pi(f))m(\pi)
\end{eqnarray*}
where $d(\pi)$ is the formal degree of $\pi$. This proves the spectral side of the trace formula for all $f\in {}^{\circ}\CC(\zg,\eta^{-1})$.

\vspace{2em}
Now it remains to consider the case when $f\in \CC_{ind,scusp}(\zg,\eta^{-1})$. We fix such a function $f$. Then the spectral side does not contain discrete series. By Corollary \ref{Gelfand pair cor}, we have the Gelfand pair condition for all the representations appear in the spectral side. As a result, we can prove the trace formula by a similar argument as in the GGP case and the GR case. To be specific, for $f'\in \CC(\zg,\eta)$, define
\begin{align*}
&K_{f,f'}(g_1,g_2)= \int_{\zg} f(g_{1}^{-1} gg_2)f'(g) \ud g, \;g_1,g_2\in G(F),\\
&K_{f,f'}^{H}(x,y)=\int_{\zh}\int_{\zh}K_{f,f'}(h_{1}^{-1}x,h_2y) \omega\otimes \xi(h_1h_2) \ud h,\; x,y\in G(F),\\
&J_{aux}(f,f')=\int_{H(F)\back G(F)} K_{f,f'}^{H}(x,x)\ud x.
\end{align*}

\begin{prop}\label{spectral 1}
All three integrals above are absolutely convergent. Moreover, we have
\begin{equation}\label{spectral 1.1}
K_{f,f'}^{H}(x,y)=\int_{\Pi_{temp}(G,\eta)} \CL_{\pi}(\pi(x)\pi(f)\pi(y^{-1}))\overline{\CL_{\pi}(\pi(\overline{f'}))} \mu(\pi)\ud\pi,
\end{equation}
\begin{equation}\label{spectral 1.2}
J_{aux}(f,f')=\int_{\CX(G,\eta)} D(\pi)\theta_f(\pi) \overline{\CL_{\pi}(\pi(\overline{f'}))} \ud\pi.
\end{equation}
\end{prop}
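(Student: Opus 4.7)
The plan is to follow the approach of \cite{B15} (GGP) and \cite{Wan17} (GR): equation \eqref{spectral 1.1} will be derived as a translated form of Lemma \ref{lemma 1}(3), and equation \eqref{spectral 1.2} will then follow by substituting \eqref{spectral 1.1} into the definition of $J_{aux}$, exchanging integration orders, and invoking the weighted-character interpretation of $\theta_f(\pi)$.

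First I would establish the three convergence statements. Since $f\in \CC(\zg,\eta^{-1})$ and $f'\in \CC(\zg,\eta)$, standard Harish-Chandra-Schwartz convolution estimates yield
\[
|K_{f,f'}(g_1,g_2)| \ll \Xi^G(g_1)\,\Xi^G(g_2)\,(1+\sigma_0(g_1)+\sigma_0(g_2))^{-N}
\]
for every $N>0$. Combined with Lemma \ref{major 2}(1) and Proposition \ref{major 4}(2), this forces $K_{f,f'}^H(x,y)$ to converge absolutely with bound $|K_{f,f'}^H(x,y)|\ll \hc(x)\hc(y)\nor(x)^{-d}\nor(y)^{-d}$ for every $d>0$, and then Proposition \ref{major 4}(1) delivers absolute convergence of $J_{aux}(f,f')$.

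For \eqref{spectral 1.1}, at $(x,y)=(1,1)$ a direct unfolding of the definitions identifies $K_{f,f'}^H(1,1)$ with the right-hand side of Lemma \ref{lemma 1}(3), so \eqref{spectral 1.1} holds in that case by Lemma \ref{lemma 1}(3). For general $(x,y)$ set $f_1 := L(x)R(y)f$, which still lies in $\CC_{ind}(\zg,\eta^{-1})$ because translation preserves the spectral support (it acts on the Paley--Wiener side by $T_{f_1}(\pi)=\pi(x)T_f(\pi)\pi(y^{-1})$, leaving the support inside $\Pi_{temp,ind}(G,\eta)$ unchanged); since $\pi(f_1)=\pi(x)\pi(f)\pi(y^{-1})$, the identity $K_{f,f'}^H(x,y)=K_{f_1,f'}^H(1,1)$ combined with the case $(1,1)$ yields \eqref{spectral 1.1} in general.

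For \eqref{spectral 1.2}, substitute \eqref{spectral 1.1} into the definition of $J_{aux}(f,f')$ and exchange integration orders (justified by the bounds above together with the smoothness claim in Lemma \ref{lemma 1}(1)). The $H$-covariance \eqref{L 1} of $\CL_\pi$ ensures that the function $x\mapsto \CL_\pi(\pi(x)\pi(f)\pi(x^{-1}))$ is left-$H$-invariant and hence descends to $H(F)\backslash G(F)$, leaving
\[
J_{aux}(f,f') = \int_{\Pi_{temp}(G,\eta)} \Big(\int_{H\backslash G}\CL_\pi(\pi(x)\pi(f)\pi(x^{-1}))\,\ud x\Big)\,\overline{\CL_\pi(\pi(\overline{f'}))}\,\mu(\pi)\,\ud\pi.
\]
The main obstacle is then to identify the parenthesised inner integral, after absorbing $\mu(\pi)$ and reparametrising from $\Pi_{temp}(G,\eta)$ to $\CX(G,\eta)$ (bookkeeping in the style of Section~2.7 of \cite{B15}), with $D(\pi)\theta_f(\pi)$. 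This step makes essential use of the strong cuspidality of $f$: one unfolds the regularised definition of $\CL_\pi$, swaps the resulting iterated integral with the $H\backslash G$-integral, and recognises the outcome as a weighted orbital-integral realisation of the weighted character $\theta_f(\pi)$. The argument is formally parallel to Section~9 of \cite{B15} and Section~6.6 of \cite{Wan17}; the only model-specific inputs are the $H$-covariance of $\CL_\pi$ and the cuspidality of $f$, both of which are already in hand.
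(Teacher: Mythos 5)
Your proposal follows the same route the paper takes — defer to B15 Prop.\ 9.2.1/9.2.2 and Wan17 Prop.\ 8.2.2/8.2.3 — and your handling of the convergence statements and of \eqref{spectral 1.1} is correct and matches the one thing the paper actually remarks on. In particular, reducing \eqref{spectral 1.1} to $(x,y)=(1,1)$ by replacing $f$ with $L(x)R(y)f$ is clean: the check that $L(x)R(y)f$ stays in $\CC_{ind}(\zg,\eta^{-1})$ (since $T_{f_1}(\pi)=\pi(x)T_f(\pi)\pi(y^{-1})$ preserves spectral support) is exactly what is needed to invoke Lemma~\ref{lemma 1}(3), and is consistent with the paper's warning that this step is where the Gelfand-pair condition (i.e.\ $f\in\CC_{ind}$) is used. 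Note that you only need $L(x)R(y)f\in\CC_{ind}$ here, not $\CC_{ind,scusp}$ — which is fortunate, since strong cuspidality is not preserved by one-sided translation.

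Where your sketch is too thin is \eqref{spectral 1.2}. You claim the interchange of $\int_{H\backslash G}$ with $\int_{\Pi_{temp}}$ is ``justified by the bounds above together with the smoothness claim in Lemma~\ref{lemma 1}(1).'' The bounds you established control $|K_{f,f'}(g_1,g_2)|$, $|K_{f,f'}^H(x,y)|$ and the outer $x$-integral; they say nothing about absolute convergence of the \emph{double} $(\pi,x)$-integral, and smoothness of $\pi\mapsto\CL_\pi$ gives no decay in $x$. For the exchange one needs a genuinely separate estimate showing that $x\mapsto\CL_\pi(\pi(x)\pi(f)\pi(x^{-1}))$ decays rapidly on $H(F)\backslash G(F)$, uniformly on the relevant compact piece of $\Pi_{temp}(G,\eta)$, and this estimate is precisely where strong cuspidality of $f$ enters (compare the weaker situation in Lemma~\ref{lemma intertwinings}, where the discrete-series assumption is what produces the decay). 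Likewise, the passage from $\mu(\pi)\,\ud\pi$ over $\Pi_{temp}(G,\eta)$ to $D(\pi)\,\ud\pi$ over $\CX(G,\eta)$ together with the identification of the parenthesised inner integral with $\theta_f(\pi)$ is \emph{the} content of the cited propositions — it requires unfolding the regularised $\CL_\pi$-integral, introducing a truncation, and recognising the limit as an Arthur weighted character. Calling this ``bookkeeping in the style of Section~2.7 of \cite{B15}'' understates the work: Section~2.7 only sets up the parametrisation of $\CX$; the computation that produces $\theta_f$ is Sections~5.4–5.5 and 9 of \cite{B15}. Your proposal is therefore an accurate outline of the strategy but leaves the two genuinely hard steps of \eqref{spectral 1.2} unaccounted for.
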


\begin{proof}
The proof is very similar to the GGP case (Proposition 9.2.1 and 9.2.2 of \cite{B15}) and the GR case (Proposition 8.2.2 and 8.2.3 of \cite{Wan17}), we will skip it here. The only thing we want to point out is that the proof of \eqref{spectral 1.1} uses Lemma \ref{lemma 1}(3) whose proof uses the Gelfand pair condition. This is why we need to require $f\in \CC_{ind,scusp}(\zg,\eta^{-1})$.
\end{proof}

Now we are ready to prove the trace formula. By Lemma \ref{lemma 1}(2), together with the fact that $f\in \CC_{ind,scusp}(\zg,\eta^{-1})$, we have
\begin{equation}\label{8.10}
I(f,x)=\int_{\Pi_{temp}(G,\eta)\smallsetminus\Pi_2(G,\eta)} \CL_{\pi}(\pi(x)\pi(f)\pi(x)^{-1})\mu(\pi) \ud\pi.
\end{equation}
By Proposition \ref{parabolic induction 5}, there exists a function $f'\in \CC(\zg,\eta)$ such that
$$\CL_{\pi}(\pi(\overline{f'}))=m(\pi)$$
for all $\pi\in \Pi_{temp}(G,\eta)$ with $\pi(f)\neq 0$ (note that $\pi(f)\neq 0$ will imply $\pi\in \Pi_{temp}(G,\eta)\smallsetminus\Pi_2(G,\eta)$). We fix such a function $f'$. By Theorem \ref{m(pi) L(pi)} and Corollary \ref{Gelfand pair cor}, for all $\pi\in \Pi_{temp}(G,\eta)\smallsetminus\Pi_2(G,\eta)$), $\CL_{\pi}\neq 0$ if and only if $m(\pi)=1$. Then \eqref{8.10} becomes
$$I(f,x)=\int_{\pi\in \Pi_{temp}(G,\eta)\smallsetminus\Pi_2(G,\eta))} \CL_{\pi}(\pi(x)\pi(f)\pi(x)^{-1})\overline{\CL_{\pi}(\pi(\overline{f'}))} \mu(\pi) \ud\pi.$$
Combining with Proposition \ref{spectral 1}, we have $I(f,x)=K_{f,f'}^{H}(x,x)$. Therefore $I(f)=J_{aux}(f,f')$. Applying Proposition \ref{spectral 1} again, together with the fact that $\overline{\CL_{\pi}(\pi(\overline{f'}))}=m(\pi)=m(\bar{\pi})$, we have
$$I(f)=J_{aux}(f,f')=\int_{\CX(G,\eta)} D(\pi)\theta_f(\pi)m(\bar{\pi}) \ud\pi=I_{spec}(f).$$
This finishes the proof of the spectral side of the trace formula.

\appendix

\section{The proof of Proposition \ref{Gelfand pair}} \label{sec:appendix}
\subsection{Some reduction}
We first recall the notation and the statement of the proposition. Let $\bar{Q}=M_Q\bar{U}_{Q}$ be a good parabolic subgroup of $G$, and $\tau$ be an admissible tempered representation of $M_Q(F)$ whose central character equals $\eta$ on $Z_G(F)$. Set $\pi=I_{\bar{Q}}^{G}(\tau)$. Our goal is to show that
\begin{equation}\label{A.1}
m(\pi)\leq m(\tau).
\end{equation}
Here $m(\pi)$ is the multiplicity for the model $(G,H)$ and $m(\tau)$ is the multiplicity for the reduced model $(G_{\bar{Q}},H_{\bar{Q}})$. We will apply the orbit method to prove \eqref{A.1}. To simplify our notation, we assume that the characters $\eta$ and $\omega$ are trivial. The argument for the general case (i.e. when the characters are nontrivial) will be exactly the same as this case.

First we consider the analogue of \eqref{A.1} for reduced models. Let $M_Q\bar{U}_Q=\bar{Q}\subset \bar{Q}'=M_Q' \bar{U}_Q'$ be two good parabolic subgroups, and let $(G_{\bar{Q}'},H_{\bar{Q}'}), \; (G_{\bar{Q}},H_{\bar{Q}})$ be the associated reduced models. Given an admissible tempered representation $\tau$ of $M_{\bar{Q}}(F)$, set $\tau'=I_{\bar{Q}\cap M_Q'}^{M_Q'}(\tau)$ which is an admissible tempered representation of $M_{\bar{Q}}'(F)$. Let $m(\tau)$ and $m(\tau')$ be the multiplicity of the reduced models $(G_{\bar{Q}},H_{\bar{Q}})$ and $(G_{\bar{Q}'},H_{\bar{Q}'})$ respectively. Then the analogue of \eqref{A.1} for reduced models is just
\begin{equation}\label{A reduced model}
m(\tau')\leq m(\tau).
\end{equation}
The proof of \eqref{A reduced model} follows from the same, but easier arguments as the proof of \eqref{A.1}. Hence by induction, we will assume that \eqref{A reduced model} holds for all good parabolic subgroups $\bar{Q}\subset \bar{Q}'$.

Then we reduce the proof of \eqref{A.1} to the case when $\bar{Q}$ is a maximal parabolic subgroup. In general, if $\bar{Q}$ is not a maximal parabolic subgroup, we can find a good maximal parabolic subgroup $\bar{Q}'=M_Q'\bar{U}_Q'$ with $\bar{Q}\subset \bar{Q}'$. Let $\tau'=I_{\bar{Q}\cap M_Q'}^{M_Q'}(\tau)$ be an admissible tempered representation of $M_Q'(F)$ and let $m(\tau')$ be the multiplicity for the reduced model $(G_{\bar{Q}'},H_{\bar{Q}'})$. Then we have $\pi=I_{\bar{Q}'}^{G}(\tau')$. Once we have proved \eqref{A.1} for all maximal parabolic subgroups, we have $m(\pi)\leq m(\tau')$. By \eqref{A reduced model}, we also have $m(\tau')\leq m(\tau)$. This proves \eqref{A.1} for the parabolic subgroup $\bar{Q}$. So from now on, we can assume that $\bar{Q}$ is a maximal good parabolic subgroup. Also by Remark \ref{irreducible}, we may also assume that $\tau$ is irreducible.

Since $G/H$ is a spherical variety, the double coset $\bar{Q}(F)\back G(F)/H(F)$ only contains finitely many elements, and  we denote it by $\{\bar{Q}\gamma_iH \colon 1\leq i\leq k\}$. By the geometric lemma of Bernstein-Zelevinsky in \cite{BZ77}, we may reorder $\gamma_i$ such that
$$Y_i=\cup_{j=1}^{i} \bar{Q}(F)\gamma_j H(F)$$
is an open subset of $G(F)$ for all $1\leq i\leq k$. Since $\bar{Q}$ is a good parabolic subgroup, we may assume that $\gamma_1=1$.

With the filtration above, for $1\leq i\leq k$, define
$$V_i=\{f\in I_{\bar{Q}}^{G}(\tau)\colon supp(f)\subset Y_i\}.$$
Then we have $V_1\subset V_2\subset \cdots \subset V_k=\pi$ and $V_i$ is $H(F)$-invariant for all $i$. In particular, this implies that
\begin{equation}\label{A.3}
m(\pi)=\dim(\Hom_{H(F)}(\pi,\xi)) \leq \sum_{i=1}^{k}\dim(\Hom_{H(F)}(V_i/V_{i-1},\xi)).
\end{equation}
Here $V_0=\{0\}$. Moreover, for any $1\leq i\leq k$, it is easy to see that the map
$$f\in V_i\mapsto \phi_f(h):=f(y_i h)$$
is an isomorphism between $V_i/V_{i-1}$ and $ind_{H_i}^{H} (\delta_{P}^{1/2} \tau^{\gamma_i}|_{H_i})$ ($ind_{H_i}^{H}$ is the compact induction). Here $H_i=H(F)\cap \gamma_{i}^{-1}\bar{Q}(F)\gamma_i=\gamma_{i}^{-1} Q_i(F) \gamma_i$ with $Q_i(F)=\bar{Q}(F)\cap \gamma_{i}H(F)\gamma_{i}^{-1}$. By reciprocity law, we have
\begin{equation}\label{reciprocity}
\Hom_{H(F)}(V_i/V_{i-1},\xi)\simeq \Hom_{Q_i(F)} (\tau, (\delta_{Q_i}\delta_{\bar{Q}}^{-1/2})\otimes {}^{\gamma_i}\xi)
\end{equation}
where the character ${}^{\gamma_i}\xi$ is defined by ${}^{\gamma_i}\xi(q):=\xi(\gamma_{i}^{-1}q \gamma_i)$ for $q\in Q_i(F)$. Here we view $\tau$ as a representation of $\bar{Q}(F)$ by making it trivial on $\bar{U}_Q(F)$.

\begin{prop}\label{A.4}
With the notation above, we have
$$m(\tau)=\dim(\Hom_{H(F)}(V_1,\xi)).$$
\end{prop}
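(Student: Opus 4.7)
The proof should be essentially a bookkeeping computation, once one recognizes that $V_1$ corresponds to the open double coset $\bar{Q}(F) H(F)$ and that Frobenius reciprocity for this piece directly yields the definition of $m(\tau)$.

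Concretely, my plan is the following. Since $\gamma_1 = 1$ was arranged so that $Y_1 = \bar{Q}(F) H(F)$ is the open double coset, we have $Q_1(F) = \bar{Q}(F) \cap H(F) = H_{\bar{Q}}(F)$, and the identification stated before \eqref{reciprocity} gives an $H(F)$-isomorphism
$$V_1 \;\simeq\; \mathrm{ind}_{H_{\bar{Q}}(F)}^{H(F)}\bigl(\delta_{\bar{Q}}^{1/2} \tau|_{H_{\bar{Q}}(F)}\bigr).$$
Specializing \eqref{reciprocity} to $i=1$ (so ${}^{\gamma_1}\xi = \xi|_{H_{\bar{Q}}(F)}$) yields
$$\Hom_{H(F)}(V_1,\xi) \;\simeq\; \Hom_{H_{\bar{Q}}(F)}\bigl(\tau,\; \delta_{H_{\bar{Q}}} \delta_{\bar{Q}}^{-1/2} \otimes \xi|_{H_{\bar{Q}}(F)}\bigr).$$

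Next I would invoke Proposition \ref{major 5} (1), which asserts that $\delta_{\bar{Q}}(h_{\bar{Q}}) = \delta_{H_{\bar{Q}}}(h_{\bar{Q}})$ for every $h_{\bar{Q}} \in H_{\bar{Q}}(F)$. This collapses the modular factor to
$$\delta_{H_{\bar{Q}}} \, \delta_{\bar{Q}}^{-1/2}\Big|_{H_{\bar{Q}}(F)} \;=\; \delta_{H_{\bar{Q}}}^{1/2},$$
so we obtain
$$\Hom_{H(F)}(V_1,\xi) \;\simeq\; \Hom_{H_{\bar{Q}}(F)}\bigl(\tau,\; \xi|_{H_{\bar{Q}}(F)} \otimes \delta_{H_{\bar{Q}}}^{1/2}\bigr).$$

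Finally, I would compare this with the definition of the reduced-model multiplicity given in Section \ref{sec:trace-formula}: since we have reduced to the case $\omega \equiv 1$ and $\eta \equiv 1$, the twisted character $\omega_Q \otimes \xi_Q = (\omega\otimes\xi)|_{H_{\bar{Q}}(F)} \otimes \delta_{H_{\bar{Q}}}^{1/2}$ is precisely $\xi|_{H_{\bar{Q}}(F)} \otimes \delta_{H_{\bar{Q}}}^{1/2}$, so the right-hand side above equals $m(\tau)$ by definition. For general central characters $\omega$ and $\eta$, the same argument works verbatim: the extra factor $\omega$ on $H_0 \subset H_{\bar{Q}}$ lands on both sides of the reciprocity identity and does not interact with the modular character simplification. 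There is no genuine obstacle here; the content of the statement is just that the contribution of the open orbit to the filtration of $\pi$ already exhausts $m(\tau)$, which is why all the remaining pieces $V_i/V_{i-1}$ with $i\geq 2$ are the ones that need to be shown to contribute nothing in the subsequent proof of \eqref{A.1} via the orbit method.
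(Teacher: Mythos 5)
Your proposal is correct and matches the paper's proof step for step: identify $Q_1 = H_{\bar Q}$, apply the reciprocity identity \eqref{reciprocity} at $i=1$, invoke Proposition \ref{major 5}(1) to simplify $\delta_{H_{\bar Q}}\delta_{\bar Q}^{-1/2}$ to $\delta_{H_{\bar Q}}^{1/2}$, and recognize the result as the definition of $m(\tau)$.
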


\begin{proof}
Since $\gamma_1=1$, we have $Q_1=H\cap \bar{Q}=H_{\bar{Q}}$. By \eqref{reciprocity}, we have
$$\Hom_{H(F)}(V_1,\xi)\simeq \Hom_{H_{\bar{Q}}(F)} (\tau, (\delta_{H_{\bar{Q}}}\delta_{\bar{Q}}^{-1/2})\otimes \xi|_{H_{\bar{Q}}} ).$$
By Proposition \ref{major 5}, $\delta_{\bar{Q}}|_{H_{\bar{Q}}}=\delta_{H_{\bar{Q}}}$. Hence we have
$$\dim(\Hom_{H(F)}(V_1,\xi))=\dim(\Hom_{H_{\bar{Q}}(F)} (\tau, \delta_{H_{\bar{Q}}}^{1/2}\otimes \xi|_{H_{\bar{Q}}} )) =m(\tau)$$
where the last equality is the definition of $m(\tau)$.
\end{proof}

The following proposition will be proved in the next subsection.
\begin{prop}\label{A.2}
With the notation above, for all $2\leq i\leq k$, we have
$$\Hom_{H(F)}(V_i/V_{i-1},\omega)=\Hom_{Q_i(F)} (\tau, (\delta_{Q_i}\delta_{\bar{Q}}^{-1/2})\otimes {}^{\gamma_i} \xi)=\{0\}.$$
In other words, all the non-open orbits are not distinguished.
\end{prop}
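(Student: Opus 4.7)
My plan is to apply Frobenius reciprocity together with the standard unipotent-character comparison argument. By the isomorphism \eqref{reciprocity} it suffices to verify that the Hom-space $\Hom_{Q_i(F)}(\tau,(\delta_{Q_i}\delta_{\bar Q}^{-1/2})\otimes {}^{\gamma_i}\xi)$ vanishes for every $i\geq 2$. Since $\tau$ has been extended from $M_Q$ to $\bar Q$ by making it trivial on $\bar U_Q$, its restriction to $Q_i(F)\cap \bar U_Q(F)$ is trivial; moreover, the modular characters $\delta_{Q_i}$ and $\delta_{\bar Q}^{-1/2}$ are trivial on unipotent elements. Consequently the Hom-space vanishes as soon as the character ${}^{\gamma_i}\xi$ is nontrivial on some element of $Q_i(F)\cap \bar U_Q(F)$, equivalently as soon as $\xi$ is nontrivial on the subgroup $H(F)\cap \gamma_i^{-1}\bar U_Q(F)\gamma_i$.

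The proposition thus reduces to the geometric assertion that for every non-open representative $\gamma_i$ with $i\geq 2$, the character $\xi$ is nontrivial on $H(F)\cap \gamma_i^{-1}\bar U_Q(F)\gamma_i$. To establish this, I would first enumerate the maximal good parabolic subgroups of $G_\varepsilon$; based on the reduced-model discussion in Section \ref{sec:trace-formula}, these correspond to Levis isomorphic to $\GL_2(E)\times \GU(J_{2,\varepsilon})$, $\GL_1(E)\times \GU(J_{4,\varepsilon})$, or $\GL_3(E)\times \GL_1(F)$ (the last only in the quasi-split case). For each such $\bar Q$ I would compute explicit representatives of $\bar Q(F)\backslash G(F)/H(F)$ through a Bruhat-type analysis, identify the open orbit with $\gamma_1 = 1$, and inspect the stabilizer structure of each non-open $\gamma_i$.

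For each non-open $\gamma_i$, the condition that a unipotent element $h = h_0 u(X,Y) \in H(F)$ lies in $\gamma_i^{-1}\bar U_Q(F)\gamma_i$ translates into a system of linear equations in the coordinates of $h_0$, $X$ and $Y$. The core computation is to check, case by case, that this linear system admits a solution on which the linear form $h \mapsto \tr_{E/F}(\tr(X))$ does not vanish. This is the expected dual picture of the open-orbit characterization: openness of the $\bar Q$-orbit of $\gamma_1 = 1$ is exactly the statement that the stabilizer of $\xi$ in $M_Q$ acts with open orbit, so for non-open double cosets the unipotent part of $Q_i\cap \bar U_Q$ must contain vectors outside the kernel of $\xi$.

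The main obstacle is the explicit determination of $\bar Q(F)\backslash G(F)/H(F)$ and the case-by-case linear-algebraic verification of nonvanishing, which is analogous to the corresponding computation for the Ginzburg-Rallis model in Proposition 4.2 of \cite{Wan16}; it is technically simpler here since we only need a nonvanishing conclusion rather than a complete orbit classification. Once these finite computations are performed for each of the three types of maximal good parabolics, the proposition follows.
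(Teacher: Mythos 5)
Your reduction to the claim that for every non-open $\gamma_i$ the character ${}^{\gamma_i}\xi$ is nontrivial on $Q_i(F)\cap\bar U_Q(F)$ (equivalently, that $\xi$ is nontrivial on $H(F)\cap\gamma_i^{-1}\bar U_Q(F)\gamma_i$) is the correct mechanism for most orbits, and your enumeration of the maximal good parabolics is correct, but the claim itself is false for exactly one orbit in each of the three cases, and this is where the real work in the paper happens. Concretely, in each case there is a ``closed'' orbit (the paper labels it $\gamma_{10}$ when $M_Q\simeq\GL_2(E)\times\GU_{1,1}$, and $\gamma_5$ in the other two cases) for which ${}^{\gamma_i}\xi$ \emph{is} trivial on $\bar U_Q(F)\cap\gamma_iH(F)\gamma_i^{-1}$; the unipotent-character argument yields nothing there. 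Your heuristic that ``openness of the orbit of $\gamma_1=1$ forces the unipotent part of $Q_i\cap\bar U_Q$ to meet the complement of $\ker\xi$ for non-open $i$'' is simply not valid --- there is no such dichotomy, and indeed for Whittaker-induced spherical subgroups the smallest stratum typically contributes the reductive part of the problem, not a unipotent obstruction.

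For that last orbit the paper has to pass to the reductive quotient: it identifies the pair $(M_Q(F),\,M_Q(F)\cap\gamma_iH(F)\gamma_i^{-1})$, peels off the remaining unipotent part by a Jacquet module with respect to a parabolic of $M_Q$, and then compares the exponents of $J_{N_0}(\tau)$ (controlled by Casselman's criterion, i.e. Proposition III.2.2 of \cite{W03}, since $\tau$ is tempered --- splitting into supercuspidal / discrete series / non-discrete tempered subcases when $M_Q\simeq\GL_2(E)\times\GU_{1,1}$) against the explicit modulus factor in the twisted character (e.g.\ $|a/b|^2$, $|b\bar b/a|^{1/2}$, $|b/a|^{3/2}$ in the three cases). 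Without this exponent comparison on the reductive stratum, Proposition \ref{A.2} is not established, so the proposal as written has a genuine gap precisely at the closed orbit.
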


Combine Proposition \ref{A.4} and \ref{A.2} with the inequality \eqref{A.3}, we have
$$m(\pi)\leq \dim(\Hom_{H(F)}(V_1,\xi))=m(\tau).$$
This proves \eqref{A.1}. Hence it remains to prove Proposition \ref{A.2}.

\subsection{The proof of Proposition \ref{A.2}}
In this subsection, we are going to prove Proposition \eqref{A.2}. In other words, we need to show that the representation $\tau$ of $\bar{Q}(F)$ is not $(Q_i, (\delta_{Q_i}\delta_{\bar{Q}}^{-1/2})\otimes {}^{\gamma_i}\xi)$-distinguished for $2\leq i\leq k$ (i.e. all the non-open orbits of $\bar{Q}(F)\back G(F)/H(F)$ are not distinguished). We will only study the quasi-split case here, and the non quasi-split case follows from a similar but easier argument (this is because there are less orbits in $\bar{Q}(F)\back G(F)/H(F)$ for the non quasi-split case). To simplify the computation, we will use the matrix $w_{2n}$ instead of $J_{2n,\varepsilon}$ to define the even  unitary similitude group, and we set $\GU_{n,n}=\GU(w_{2n})$ (in particular, $G(F)=\GU_{3,3}(F)$). Since $\bar{Q}$ is a maximal parabolic subgroup, its Levi part $M_{Q}(F)$ is isomorphic to $\GL_2(E)\times \GU_{1,1}(F)$, $\GL_1(E)\times \GU_{2,2}(F)$ or $\GL_3(E)\times \GL_1(F)$.

We first consider the case when $M_{Q}(F)\simeq \GL_2(E)\times \GU_{1,1}(F)$. In this case, we may just take  $\bar{Q}=\bar{P}$ and $M_Q=M$ where $\bar{P}$ is the parabolic subgroup opposite to $P$ and $P=MU$ is the standard parabolic subgroup of $G$ defined in the introduction. We need to compute the double coset $\bar{P}(F)\back G(F)/H(F)$. By the Bruhat decomposition, the double coset $\bar{P}(F)\back G(F)/P(F)$ contains $5$ elements $\{\bar{P}(F)v_iP(F)|\; 1\leq i\leq 5\}$ with
$$v_1=I_6,\; v_2=w_{(653421)},\;v_3=w_{(623451)},\;v_4=w_{(351624)},v_5=w_{(321654)}.$$
Here we are using partitions to denote the Weyl elements. To be specific, $w_{(s_1,\cdots,s_n)}$ is the $n$-by-$n$ matrix with entries $1$ in the $(s_k,k)$ positions ($1\leq k\leq n$) and $0$ elsewhere. Then we need to break the orbit $\bar{P}(F)v_iP(F)$ into orbits in $\bar{P}(F)\back G(F)/H(F)$.

For $i=1,2$, it is easy to see that $M(F)\subset P(F)\cap v_{i}^{-1}\bar{P}(F)v_i$. Together with the fact that $P(F)=M(F)H(F)$, we have $\bar{P}(F)v_iP(F)=\bar{P}(F)v_iH(F)$ for $i=1,2$. For $i=3$, $M(F)\cap v_{i}^{-1}\bar{P}(F)v_i\simeq B_{\GL_2}(F)\times \GU_{1,1}(F)$ where $B_{\GL_2}(F)$ is the upper triangular Borel subgroup of $\GL_2(E)$. Since the double coset
$$B_{\GL_2}(F)\times \GU_{1,1}(F)\back  \GL_2(E)\times \GU_{1,1}(F)/ \GU_{1,1}(F)^{\Delta}$$
contains two orbits that are represented by $I_2\times I_2$ and $\left(\begin{smallmatrix}1&0\\1&1\end{smallmatrix}\right)\times I_2$, we have $\bar{P}(F)v_3P(F)=\bar{P}(F)v_{31}H(F)\cup \bar{P}(F)v_{32}H(F)$ with
$$v_{31}=v_3,\;v_{32}=v_3\cdot diag(\begin{pmatrix}1&0\\1&1\end{pmatrix}, I_2, \begin{pmatrix}1&0\\-1&1\end{pmatrix}).$$
Similarly, since the double coset
$$B_{\GL_2}(F)\times \bar{B}_0(F)\back  \GL_2(E)\times \GU_{1,1}(F)/\GU_{1,1}(F)^{\Delta}$$
contains three elements that are represented by $I_2\times I_2,\;I_2\times \left(\begin{smallmatrix}0&1\\1&0\end{smallmatrix}\right)$ and $\left(\begin{smallmatrix}1&0\\1&1\end{smallmatrix}\right)\times I_2$ where $\bar{B}_{0}(F)$ is the lower triangular Borel subgroup of $\GU_{1,1}(F)$, we have
\begin{align*}
& \bar{P}(F)v_4P(F)=\bar{P}(F)v_{41}H(F)\cup \bar{P}(F)v_{42}H(F)\cup \bar{P}(F)v_{43}H(F)\\
&\bar{P}(F)v_5P(F)=\bar{P}(F)v_{51}H(F)\cup \bar{P}(F)v_{52}H(F)\cup \bar{P}(F)v_{53}H(F)
\end{align*}
with
\begin{align*}
 & v_{41}=v_4,\;v_{42}=v_4\cdot diag(\begin{pmatrix}1&1\\0&1\end{pmatrix}, I_2, \begin{pmatrix}1&-1\\0&1\end{pmatrix}),\; v_{43}=v_4\cdot w_{(124356)},\\
&v_{51}=v_5,\;v_{52}=v_5\cdot diag(\begin{pmatrix}1&0\\1&1\end{pmatrix}, I_2, \begin{pmatrix}1&0\\-1&1\end{pmatrix}),\; v_{53}=v_5\cdot w_{(124356)}.
\end{align*}
To summarize, the double coset $\bar{P}(F)\back G(F)/H(F)$ contains 10 orbits $\{\bar{P}(F)\gamma_iH(F)|\;1\leq i\leq 10\}$ with
\begin{align*}
&\gamma_1=I_6,\;\gamma_2=w_{(653421)},\;\gamma_3=w_{(623451)},\;\gamma_4=w_{(623451)}\cdot diag(\begin{pmatrix}1&0\\1&1\end{pmatrix}, I_2, \begin{pmatrix}1&0\\-1&1\end{pmatrix}),\\
&\gamma_5=w_{(351624)},\;\gamma_6=w_{(351624)}\cdot diag(\begin{pmatrix}1&1\\0&1\end{pmatrix}, I_2, \begin{pmatrix}1&-1\\0&1\end{pmatrix}),\; \gamma_{7}=w_{(351624)}\cdot w_{(124356)},\\
&\gamma_8=w_{(321654)},\;\gamma_9=w_{(321654)}\cdot diag(\begin{pmatrix}1&0\\1&1\end{pmatrix}, I_2, \begin{pmatrix}1&0\\-1&1 \end{pmatrix}), \;\gamma_{10}=w_{(321654)}\cdot w_{(124356)}.
\end{align*}

Now we are ready to prove Proposition \ref{A.2} for this case. For $2\leq i\leq 10$, we need to show that
\begin{equation}\label{A.5}
\Hom_{Q_i(F)} (\tau, (\delta_{Q_i}\delta_{\bar{Q}}^{-1/2})\otimes {}^{\gamma_i} \xi)=\{0\}
\end{equation}
with $Q_i(F)=\bar{P}(F)\cap \gamma_{i}H(F)\gamma_{i}^{-1}$. For $2\leq i\leq 9$, one can easily show that the character $(\delta_{Q_i}\delta_{\bar{Q}}^{-1/2})\otimes {}^{\gamma_i}\xi$ of $Q_i(F)$ is nontrivial on $\bar{U}(F)\cap \gamma_{i}H(F)\gamma_{i}^{-1}$, and then this proves \eqref{A.5} as the representation $\tau$ of $\bar{P}(F)$ is trivial on $\bar{U}(F)$. So it remains to consider the last orbit $\bar{P}(F)\gamma_{10}H(F)$. In this case, the character will be trivial on the unipotent part $\bar{U}(F)\cap \gamma_{10}H(F)\gamma_{10}^{-1}$. Hence we can get rid of this part. So it remains to consider the reductive part $(M(F),M(F)\cap \gamma_{10}H(F)\gamma_{10}^{-1})$. By an easy computation of the intersection and the character, it is enough for us to show that as a representation of $M(F)\simeq \GL_2(E) \times \GU_{1,1}(F)$, $\tau=\tau_1\otimes \tau_2$ is not $(M',\xi')$-distinguished where
\begin{align*}
M'(F)=&\{m(a,b,x,y)=\begin{pmatrix} b&0\\x&b\end{pmatrix}\times \begin{pmatrix}a&y\\0&b\end{pmatrix}\colon \\
&  a,b\in E^{\times}, x,y\in E \text{ with } \frac{a}{b}\in F^{\times}\;\text{and} \; \frac{y}{a}\in \sqrt{\alpha}F \}\\
\end{align*}
and the character $\xi'$ is defined to be
$$\xi'(m(a,b,x,y))=|\frac{a}{b}|^2 \psi(\tr_{E/F}(\frac{x}{b})).$$
Here the $|\frac{a}{b}|^2$-part comes from the modular character and the $\psi(\tr_{E/F}(\frac{x}{b}))$-part comes from the character $\xi$. After we modulo the center, it is enough to show that $\tau$ is not $(M'',\xi'')$-distinguished where
$$M''(F)=\{m'(a,x,y)=\begin{pmatrix} 1&0\\x&1\end{pmatrix}\times \begin{pmatrix}a&y\\0&1\end{pmatrix}\colon a\in F^{\times}, x\in E,y\in \sqrt{\alpha}F\}$$
and $\xi''(m'(a,x,y))=|a|^2 \psi(\tr_{E/F}(x))$.

Let $B_0=T_0N_0$ be the upper triangular Borel subgroup of $\GU_{1,1}(F)$ and let $J_{N_0}(\tau_2)$ be the Jacquet module of $\tau_2$ with respect to $N_0(F)$. Then in order to show $\tau=\tau_1\otimes \tau_2$ is not $(M',\xi')$-distinguished, it is enough to show that as a representation of $\GL_2(E)\times T_0(F)$, the representation $\tau_1\otimes J_{N_0}(\tau_2)$ is not $(M_0(F),\xi_0)$-distinguished where
\begin{align*}
 & M_0(F)=\{m_0(a,x)=\begin{pmatrix} 1&0\\x&1\end{pmatrix}\times \begin{pmatrix}a&0\\0&1\end{pmatrix}\colon a\in F^{\times}, x\in E\},\\
 &\xi_0(m_0(a,x))=|a|^2 \psi(\tr_{E/F}(x)).
\end{align*}
If $\tau_2$ is supercuspidal, $J_{N_0}(\tau_2)=0$ and hence $\tau_1\otimes J_{N_0}(\tau_2)$ is not $(M_0(F),\xi_0)$-distinguished. When $\tau_2$ is a discrete series, $J_{N_0}(\tau_2)$ is a one dimension representation of $T_0(F)$ with
$$|J_{N_0}(\tau_1)(\begin{pmatrix}a&0\\0&b\end{pmatrix})|=|\frac{a}{b}|,\; a,b\in E^{\times},\frac{a}{b}\in F^{\times}.$$
This implies that $\tau_1\otimes J_{N_0}(\tau_2)$ is not $(M_0(F),\xi_0)$-distinguished. When $\tau_2$ is a tempered representation but not a discrete series, $J_{N_0}(\tau_2)=\eta_1\oplus \eta_2$ where $\eta_i$ are characters of $T_0(F)$ with
$$|\eta_i(\begin{pmatrix}a&0\\0&b\end{pmatrix})|=|\frac{a}{b}|^{1/2},\; a,b\in E^{\times},\frac{a}{b}\in F^{\times}.$$
This implies that $\tau_1\otimes J_{N_0}(\tau_2)$ is not $(M_0(F),\xi_0)$-distinguished. To summarize, we have proved Proposition \ref{A.2} when $M_{Q}(F)$ is isomorphic to $\GL_2(E)\times \GU_{1,1}(F)$.
\\

Then we consider the case when $M_{Q}(F)\simeq \GL_3(E)\times \GL_1(F)$. In this case, we may choose $\bar{Q}(F)$ to be the parabolic subgroup of $\GU_{3,3}(F)$ containing the lower Borel subgroup such that its Levi part is isomorphic to $\GL_3(F)\times \GL_1(F)$. By a similar argument as in the previous case, we can show that the double coset $\bar{Q}(F)\back G(F)/H(F)$ contains 5 orbits $\{\bar{Q}(F)\gamma_iH(F)|\;1\leq i\leq 5\}$ with
\begin{align*}
&\gamma_1=I_6,\;\gamma_2=w_{(654321)},\;\gamma_3=w_{(623451)},\\
&\gamma_4=w_{(623451)}\cdot diag(\begin{pmatrix}1&0\\1&1\end{pmatrix}, I_2, \begin{pmatrix}1&0\\-1&1 \end{pmatrix}),~
\gamma_{5}=w_{(623451)}\cdot w_{(124356)}.
\end{align*}
For $2\leq i\leq 5$, we need to show that
\begin{equation}\label{A.6}
\Hom_{Q_i(F)} (\tau, (\delta_{Q_i}\delta_{\bar{Q}}^{-1/2})\otimes {}^{\gamma_i} \xi)=\{0\}
\end{equation}
with $Q_i(F)=\bar{Q}(F)\cap \gamma_{i}H(F)\gamma_{i}^{-1}$. The argument is similar to the previous case. For $2\leq i\leq 4$, one can easily show that the character $(\delta_{Q_i}\delta_{\bar{Q}}^{-1/2})\otimes {}^{\gamma_i}\xi$ of $Q_i(F)$ is nontrivial on $\bar{U}_Q(F)\cap \gamma_{i}H(F)\gamma_{i}^{-1}$, and then this proves \eqref{A.6} as the representation $\tau$ is trivial on $\bar{U}_Q(F)$. So it remains to consider the last orbit $\bar{Q}(F)\gamma_{5}H(F)$. In this case, the character will be trivial on the unipotent part $\bar{U}_Q(F)\cap \gamma_{5}H(F)\gamma_{5}^{-1}$. Hence we can get rid of this part. So it remains to consider the reductive part $(M_Q(F),M_Q(F)\cap \gamma_{5}H(F)\gamma_{5}^{-1})$. By an easy computation of the intersection and the character, it is enough for us to show that as representation of $M(F)\simeq \GL_3(E)\times \GL_1(F)$, $\tau=\tau_1\otimes \tau_2$ is not $(M',\xi')$-distinguished where
$$M'(F)=\{m(a,b,x,y,z)=\begin{pmatrix}a\end{pmatrix} \times \begin{pmatrix} b&x&z\\0&b&y\\0&0&b\end{pmatrix} \colon a\in F^{\times},b\in E^{\times}, x,y,z\in E \}$$
and the character $\xi'$ is defined by
$$\xi'(m(a,b,x,y,z))=|\frac{b\bar{b}}{a}|^{1/2} \psi(\tr_{E/F}(\frac{x}{b}+\frac{y}{b})).$$
Here the $|\frac{b\bar{b}}{a}|^{1/2}$-part comes from the modular character and the $\psi(\tr_{E/F}(\frac{x}{b}+\frac{y}{b}))$-part comes from the character $\xi$. But this is trivial since $\tau$ is a tempered (and hence unitary) representation. This proves Proposition \ref{A.2} when $M_{Q}(F)$ is isomorphic to $\GL_3(E)\times \GL_1(F)$.

Finally we consider the case when $M_{Q}(F)\simeq \GL_1(E)\times \GU_{2,2}(F)$. In this case, let $Q'(F)$ be the parabolic subgroup of $\GU_{3,3}(F)$ containing the lower Borel subgroup such that its Levi part is isomorphic to $\GL_1(E)\times \GU_{2,2}(F)$. Then we can choose $\bar{Q}(F)$ to be $\delta Q'(F)\delta^{-1}$ with
$$\delta=diag(\begin{pmatrix}1&1\\0&1\end{pmatrix}, I_2, \begin{pmatrix}1&-1\\0&1\end{pmatrix}).$$
By a similar argument as in the previous cases, we can show that the double coset $\bar{Q}(F)\back G(F)/H(F)$ contains 5 orbits $\{\bar{Q}(F)\gamma_iH(F)|\;1\leq i\leq 5\}$ with
$$\gamma_1=I_6,\;\gamma_2=\delta w_{(623451)},\;\gamma_3=\delta w_{(623451)}\delta ,\;\gamma_4=\delta w_{(321654)},\;\gamma_5=\delta.$$
For $2\leq i\leq 5$, we need to show that
\begin{equation}\label{A.7}
\Hom_{Q_i(F)} (\tau, (\delta_{Q_i}\delta_{\bar{Q}}^{-1/2})\otimes {}^{\gamma_i} \xi)=\{0\}.
\end{equation}
The argument is similar to the previous cases. For $2\leq i\leq 4$, one can easily show that the character $(\delta_{Q_i}\delta_{\bar{Q}}^{-1/2})\otimes {}^{\gamma_i}\xi$ of $Q_i(F)$ is nontrivial on $\bar{U}_Q(F)\cap \gamma_{i}H(F)\gamma_{i}^{-1}$, and this proves \eqref{A.7} as the representation $\tau$ is trivial on $\bar{U}_Q(F)$. So it remains to consider the last orbit $\bar{Q}(F)\gamma_{5}H(F)$. In this case, the character will be trivial on the unipotent part $\bar{U}_Q(F)\cap \gamma_{5}H(F)\gamma_{5}^{-1}$. Hence we can get rid of this part. So it remains to consider the reductive part $(M_Q(F),M_Q(F)\cap \gamma_{5}H(F)\gamma_{5}^{-1})$. By an easy computation of the intersection and the character, it is enough for us to show that as a representation of $M(F)\simeq \GL_1(E)\times \GU_{2,2}(F)$, $\tau=\tau_1\otimes \tau_2$ is not $(M',\xi')$-distinguished where
\begin{align*}
M'(F)=&\{m(a,b,x,X)=\begin{pmatrix}a\end{pmatrix}\times \begin{pmatrix}b&x&0&0\\0&b&0&0\\0&0&a&-\frac{a\bar{x}}{\bar{b}} \\0&0&0&a\end{pmatrix} \begin{pmatrix} I_2&X \\0&I_2\end{pmatrix}\colon \\
& a,b\in E^{\times},x\in E,X\in Mat_{2\times 2}(E)\;\text{with}\;\frac{a}{b}\in F^{\times},w_2\bar{X}w_2+{}^tX=0 \},
\end{align*}
and the character $\xi'$ is defined by
$$\xi'(m(a,b,x,X))=|\frac{b}{a}|^{3/2} \psi(\tr_{E/F}(\frac{x}{b})).$$
Here the $|\frac{b}{a}|^{3/2}$-part comes from the modular character and the $\psi(\tr_{E/F}(\frac{x}{b}))$-part comes from the character $\xi$. Let $Q_0=L_0N_0$ be the standard parabolic subgroup of $\GU_{2,2}(F)$ with $L_0(F)\simeq \GL_2(E)\times \GL_1(F)$ and let $J_{N_0}(\tau_2)$ be the Jacquet model of $\tau_2$ which is a representation of $L_0(F)=\{diag(\lambda g,w_2{}^t\bar{g}^{-1}w_2)\colon g\in \GL_2(E),\lambda\in F^{\times}\}$. Then in order to show $\tau=\tau_1\otimes \tau_2$ is not $(M',\xi')$-distinguished, it is enough to show that as a representation of $\GL_1(E)\times L_0(F)$, the representation $\tau_1\otimes J_{N_0}(\tau_2)$ is not $(M_0(F),\xi_0)$-distinguished where
\begin{align*}
  M_0(F)=&\{m_0(a,b,x)=\begin{pmatrix}a\end{pmatrix}\times diag(\begin{pmatrix}b&x\\0&b\end{pmatrix},\begin{pmatrix}a&-\frac{a\bar{x}}{\bar{b}}\\0&a \end{pmatrix}) \colon \\
 &a,b\in E^{\times},x\in E \;\text{with}\; \frac{a}{b}\in F^{\times}\},\\
\xi_0(m_0(a,b,x))=&|\frac{b}{a}|^{3/2} \psi(\tr_{E/F}(\frac{x}{b})).
\end{align*}

We decompose $J_{N_0}(\tau_2)$ as $\oplus_{i=1}^{k}\tau_{2,i}$ based on the central character (i.e. the center of $L_0(F)$ acts by scalar on $\tau_{2,i}$ and the central characters of $\tau_{2,i}$ and $\tau_{2,j}$ are different for any $i\neq j$). We use $\chi_i$ to denote the central character of $\tau_{2,i}$. Since $\tau_2$ is tempered, by Proposition III.2.2 of \cite{W03}, all the characters $\chi_i$ are ``positive" than the square root of the modular character. To be specific, for all $1\leq i\leq k$, we have
$$|\chi_i(diag(\begin{pmatrix}b&0\\0&b\end{pmatrix}\times \begin{pmatrix}\bar{b}^{-1}&0\\0&\bar{b}^{-1}\end{pmatrix}))|=|b\bar{b}|^{s_i}, \;b\in E^{\times}$$
for some $s_i\in \BR$ with $s_i\geq 2$. On the mean time, we have
$$\xi_0(\begin{pmatrix}\bar{b}^{-1}\end{pmatrix}\times diag(\begin{pmatrix}b&0\\0&b\end{pmatrix}\times \begin{pmatrix}\bar{b}^{-1}&0\\0&\bar{b}^{-1} \end{pmatrix}))=|b\bar{b}|^{3/2}.$$
This implies that the representation $\tau_1\otimes J_{N_0}(\tau_2)$ is not $(M_0(F),\xi_0)$-distinguished. This proves Proposition \ref{A.2} when $M_{Q}(F)$ is isomorphic to $\GL_1(E)\times \GU_{2,2}(F)$. Now the proof of Proposition \ref{A.2} is complete.

\end{document}